\documentclass[11pt,letterpaper]{amsart}
\usepackage{amssymb,mathrsfs,graphicx,enumerate,color}
\usepackage{amsthm,amsfonts,amssymb,epsfig,graphics,amsmath,amsbsy,enumerate}
\usepackage{colortbl}
\definecolor{black}{rgb}{0.0, 0.0, 0.0}

\title[Stability of composite waves for the system of ideal gas without viscosity]
{Time-asymptotic stability of generic Riemann solutions for the system of heat-conductive ideal gas without viscosity}

\author[Y. Guo]{Yilin Guo}
\address[Yilin Guo]{\newline School of Mathematical Sciences, Laboratory of Mathematics and Complex Systems, MOE, Beijing Normal University, Beijing 100875, P. R. China.}
\email{202531130032@mail.bnu.edu.cn}

\author[L.-A. Li]{Lin-An Li}
\address[Lin-An Li]{\newline School of Mathematical Sciences, Laboratory of Mathematics and Complex Systems, MOE, Beijing Normal University, Beijing 100875, P. R. China.}
\email{linanli@amss.ac.cn}

\author[J. Wu]{Jiahong Wu}
\address[Jiahong Wu]{\newline Department of Mathematics, University of Notre Dame, Notre Dame, IN 46556, USA.}
\email{jwu29@nd.edu}

\author[X. Xu]{Xiaojing Xu}
\address[Xiaojing Xu]{\newline School of Mathematical Sciences, Laboratory of Mathematics and Complex Systems, MOE, Beijing Normal University, Beijing 100875, P. R. China.}
\email{xjxu@bnu.edu.cn}

\allowdisplaybreaks

\newtheorem{theorem}{Theorem}[section]
\newtheorem{lemma}{Lemma}[section]

\newtheorem{proposition}{Proposition}[section]
\newtheorem{remark}{Remark}[section]

\newcommand{\beq}{\begin{equation}}
	\newcommand{\eeq}{\end{equation}}
\newcommand{\bsp}{\begin{split}}
	\newcommand{\esp}{\end{split}}

\newcommand{\di}{\displaystyle}
\newcommand{\s}{\sigma}
\newcommand{\x}{\xi}

\newcommand{\deltas}{\delta_S}
\newcommand{\deltar}{\delta_R}
\newcommand{\dc}{\delta_C}
\newcommand{\ds}{\delta_S}

\newcommand{\bbr}{\mathbb {R}}
\newcommand{\R}{\mathbb {R}}
\newcommand{\mb}{\mathbf}

\def\eps{\varepsilon }

\def\lam{\lambda}  
\def\l{\lambda}

\newcommand \vc{v^{C}}
\DeclareMathSizes{11}{11}{3}{2}

\begin{document}

\date{\today}

\subjclass[2020]{}
\keywords{compressible Navier-Stokes equations without viscosity, viscous shock wave, rarefaction wave, viscous contact discontinuity, $a$-contraction with shifts, stability}


\begin{abstract}
	This paper is concerned with the time-asymptotic stability of the generic Riemann solution for the one-dimensional system of heat-conductive ideal gas without viscosity, where the generic Riemann solution consists of a shock, a contact discontinuity, and a rarefaction wave. We prove that, as time tends to infinity, the solution of the non-viscous and heat-conductive ideal gas system converges uniformly to a composite wave composed of rarefaction wave, viscous contact wave, and viscous shock wave with a time-dependent shift. Motivated by the recent work of Kang-Vasseur-Wang [Arch. Ration. Mech. Anal. 249: 42 (2025)], we overcome the difficulties arising from the concurrence of shock and rarefaction waves for the partially dissipative hyperbolic-parabolic system with dissipation acting only on a single variable. More notably, the absence of velocity dissipation gives rise to new and intrinsic difficulties when handling the terms associated with the density and velocity. To resolve this, we exploit the precise structure of the governing equations and the additional properties of shock waves. Furthermore, we utilize the wave structure of the system without viscosity and perform separate space-time estimates for the density and velocity.

\end{abstract}

\maketitle

\tableofcontents

\section{Introduction}
\setcounter{equation}{0}

Consider the equations for the heat-conductive ideal gas without viscosity, which are described by the following system in Lagrangian mass coordinates:
\begin{equation}\label{NS}
\begin{cases}
    v_t-u_x=0, 
    & x\in\mathbb{R},\ t>0,\\[1mm]
    u_t+p(v,\theta)_x=0,
    & \\[1mm]
    E_t+(p(v,\theta)u)_x
    =\left(\kappa\frac{\theta_x}{v}\right)_x,
    &
\end{cases}
\end{equation}
where $v>0$, $u$, $\theta>0$, $E=e+\frac{u^2}2$ and $p$ represent the specific volume, velocity, the absolute temperature, the total energy and the pressure of the gas, respectively, and $\kappa$ is a positive constant representing the heat-conductivity coefficient. Since we consider an ideal polytropic gas, the pressure $p$ and the internal energy  $e$ are given by 
\begin{equation}\label{state}
	p=\frac{R\theta}v=A v^{-\gamma} \exp \left(\frac{\gamma-1}{R}s\right),\qquad e=\frac{R}{\gamma-1}\theta+const.,
\end{equation}
where $A>0, R>0, \gamma>1$ are constants and $s=\frac{R}{\gamma-1} \log(\frac{R}{A} \theta v^{\gamma-1})$ is entropy. We study the Cauchy problem for \eqref{NS} with initial conditions:
\begin{equation}\label{in}
	(v, u, \theta)(0, x)=(v_{0}, u_{0}, \theta _{0})(x) \to (v_{\pm },u_{\pm },\theta _{\pm }),  \quad \text{as} \quad x\to \pm \infty,
\end{equation}
where the states $v_{ \pm}>0$, $\theta_{ \pm}>0$ and $u_{ \pm} \in \mathbb{R}$ at far fields are given constants. We aim to prove that the long-time behavior of solutions to \eqref{NS}, with initial values verifying \eqref{in}, is determined by the Riemann problem of the associated full Euler equations: 
\begin{equation}\label{E}
\begin{cases}
    v_t-u_x=0, 
    & x\in\mathbb{R},\ t>0,\\[1mm]
    u_t+p(v,\theta)_x=0,
    & \\[1mm]
    E_t+(p(v,\theta)u)_x
    =0
    &
\end{cases}
\end{equation}
with the Riemann initial data
\begin{equation}\label{Ei}
(v,u,\theta)(0,x)=
\begin{cases}
(v_-,u_-,\theta_-), & x<0,\\[1mm]
(v_+,u_+,\theta_+), & x>0,
\end{cases}
\end{equation}
corresponding to the end states \eqref{in}. In this paper, we consider a generic three-wave configuration in which the Riemann solution to \eqref{E}–\eqref{Ei} consists of a shock wave, a contact discontinuity, and a rarefaction wave.

When the ideal gas has both viscosity and heat-conductivity with  initial values verifying \eqref{in}, there have been extensive research results. The corresponding governing equations are classical compressible Navier-Stokes-Fourier equations as follows:
\begin{equation}
	\begin{cases}\label{CNS}
		v_t - u_x = 0,   \qquad\qquad x \in \mathbb{R},\ t > 0, \\
		u_t + p_x = \mu\left(\frac{u_x}{v}\right)_x, \\
		\left(e+\frac{u^2}{2}\right)_t + (pu)_x = \kappa\left(\frac{\theta_x}{v}\right)_x + \mu\left(\frac{uu_x}{v}\right)_x,
	\end{cases}
\end{equation}
where $\mu$	is a positive constant representing the viscosity coefficient. 

For compressible Navier-Stokes equations, including isentropic system and non-isentropic system \eqref{CNS}, nonlinear stability  has been established for viscous shock waves, rarefaction waves, viscous contact discontinuities, and various classes of composite waves. For viscous shock waves, Goodman \cite{Good} and Matsumura-Nishihara \cite{MN85} studied the stability problems under the zero-mass assumption. Subsequently, Liu \cite{Liu85}, Szepessy-Xin \cite{SX93} and Liu-Zeng \cite{LZ15} further removed the restriction of the zero-mass assumption by introducing a constant shift for the viscous shock and diffusion waves, as well as for the coupled diffusion waves in the transverse characteristic fields. Nevertheless, all the aforementioned investigations employed the anti-derivative method to perform stability analysis and yielded corresponding results. For rarefaction waves, Matsumura-Nishihara \cite{MN92,MN86}, Liu-Xin \cite{LX88}, Jiu-Wang-Xin \cite{JWX13} and Nishihara-Yang-Zhao \cite{NYZ04} successfully established the stability via the direct energy method. And Huang-Xin-Yang \cite{HXY}, Huang-Matsumura-Xin \cite{HMX} and Liu-Xin \cite{Xin-c} have analyzed the stability of viscous contact discontinuities. In the context of composite wave research, Huang-Matsumura \cite{HM} have conducted stability analysis on the superposition of two shock waves, as well as Huang-Li-Matsumura \cite{HLM} and Huang-Wang \cite{HW16} have studied the stability of the composition of rarefaction waves and viscous contact discontinuities. However, the stability of shock superimposed with rarefaction waves, or more generally, composite waves of shocks, rarefaction waves, and contact discontinuities remained an open problem for a long time. This was primarily because the classical anti-derivative method used for analyzing shock stability was not well compatible with the direct energy method used for rarefaction and contact discontinuity waves. To circumvent the anti-derivative framework, Kang-Vasseur \cite{KV17} first introduced the theory of $a$-contraction with shifts in the context of artificial viscosity, and subsequently Kang-Vasseur \cite{KVJEMS21} applied it to the barotropic Navier-Stokes equations. Recently, in references \cite{KVW23, KVW25}, Kang-Vasseur-Wang applied the $a$-contraction with shifts method, successfully solving the stability problem for composite waves consisting of a shock wave, a rarefaction wave, and a contact discontinuity.

Compared to system \eqref{CNS}, although system \eqref{NS} lacks velocity viscosity, it still satisfies the ``Kawashima-Shizuta condition'' \cite{SK1, SK2}. This condition ensures the recovery of dissipation for all variables (density, velocity, and temperature), from which it can be inferred that system \eqref{NS} may still possess stability. Based on this observation, Murakami investigated the stability of the rarefaction wave for system \eqref{NS} in \cite{TM}. Hou established the stability of a rarefaction wave on the half-line in \cite{Hou}. Ma-Wang studied the stability of contact discontinuities in \cite{MW}. Fan-Matsumura \cite{FM} conducted stability analysis on the composite of two shock waves through the anti-derivative method, and Fan-Gong-Tang \cite{FGT} obtained stability results for the composite of rarefaction waves and contact discontinuities through the direct energy integration method. Similar to the system \eqref{CNS}, due to the emergence of both shock and rarefaction waves, the stability problem of the composite wave formed by the superposition of rarefaction waves, viscous shock waves and viscous contact discontinuities remains unsolved to date.

In this paper, for the partially dissipative hyperbolic-parabolic system \eqref{NS} with dissipation acting only on a single variable, we rigorously prove that the Kawashima-Shizuta transfer mechanism is sufficient to guarantee the time-asymptotic stability of the aforementioned generic Riemann solution.

\subsection{Riemann problem for the inviscid system}

First, we recall the Riemann problem for the Euler equations \eqref{E}-\eqref{Ei}, which was proposed and solved by Riemann \cite{Riemann} in the 1860s, then generalized by Lax \cite{Lax}. Since the Euler system \eqref{E} is strictly hyperbolic, set $V=(v,u,E)^t$ and the system \eqref{E} can be written as $$V_t+\mathbf{A}(V)V_x=0,$$
where the Jacobian matrix $\mathbf{A}(V)$ is given by
\[
\mathbf{A}(V)=
\begin{pmatrix}
0 & -1 & 0 \\[1mm]
-\dfrac{p}{v} 
& -\dfrac{(\gamma-1)u}{v} 
& \dfrac{\gamma-1}{v} \\[2mm]
-\dfrac{pu}{v}
& p-\dfrac{(\gamma-1)u^2}{v}
& \dfrac{(\gamma-1)u}{v}
\end{pmatrix}
\]
with three real eigenvalues:
\[
\lambda_1 = \lambda_1(v,\theta) = -\sqrt{\frac{\gamma p}{v}} < 0, \quad 
\lambda_2 = 0, \quad \text{and} \quad 
\lambda_3 = \lambda_3(v,\theta) = \sqrt{\frac{\gamma p}{v}} > 0.
\]
According to the references \cite{Smoller, Dafermos1}, these eigenvalues generate two genuinely nonlinear characteristic fields corresponding to $\lambda_1$ and $\lambda_3$, and one linearly degenerate characteristic field corresponding to $\lambda_2$. Consequently, the solution to the Riemann problem is determined by a combination of at most three elementary waves, one from each characteristic family: a 1-rarefaction or 1-shock, a 2-contact discontinuity, and a 3-rarefaction or 3-shock. Without loss of generality, in this paper, we mainly consider a composite wave composed of 1-rarefaction wave, 2-contact discontinuity wave, and 3-shock wave, which is a solution to Riemann problem of Euler equations \eqref{E} with the initial values \eqref{Ei}. For convenience, we first use
\begin{equation}\label{ELR}
(v,u,\theta)(0,x)=
\begin{cases}
(v_L,u_L,\theta_L), & x<0,\\[1mm]
(v_R,u_R,\theta_R), & x>0,
\end{cases}
\end{equation}
as the Riemann data to construct the elementary wave of system \eqref{E}, and then introduce the generic Riemann solution consisting of 1-rarefaction wave, 2-contact discontinuity wave, and 3-shock wave.

For the 1-rarefaction wave, given any state $(v_{L}, u_{L}, \theta_{L}) \in \mathbb{R}_+ \times \mathbb{R} \times \mathbb{R}_+$, there exists 1-rarefaction wave integral curve determined by the eigenvalue $\lambda_1$:
\begin{equation*}
	R_1 (v_L, u_L, \theta_L) := \left\{ (v, u, \theta) \left| v > v_L ,~ s(v,\theta) = s(v_L, \theta_L) := s_L,~ u = u_L - \int_{v_L}^v \lambda_1(s_L, v^\prime) \, dv^\prime \right. \right\},
\end{equation*}
where the entropy function $s$ and $\lambda_1$ are defined as
\begin{align}\label{entropy}
	\begin{aligned}
	s(v,\theta) = \frac{R}{\gamma-1} \log\left( \frac{R}{A}\theta v^{\gamma-1}\right), \qquad \lambda_1(s,v) = -\sqrt{\frac{\gamma p(v,s)}{v}}.
		\end{aligned}
\end{align}
Therefore, for any initial values \eqref{ELR} with $(v_{R}, u_{R}, \theta_{R}) \in R_1 (v_L, u_L, \theta_L)$, the solution $(v^{r}, u^{r}, \theta^{r})$ of the Riemann problem \eqref{E}-\eqref{ELR} is 1-rarefaction wave, which is defined as
\begin{equation} \label{rarefaction}
\lambda_1(v^r(t, x),\theta^r(t,x))  = \begin{cases}
\lambda_1(v_L,\theta_L) , \qquad x < \lambda_1(v_L,\theta_L)t, \\[2mm]
\di \frac{x}{t}, \qquad  \lambda_1(v_L,\theta_L)t \leq x \leq  \lambda_1(v_R,\theta_R)t, \\[2mm]
\lambda_1(v_R,\theta_R), \qquad x > \lambda_1(v_R,\theta_R)t,
\end{cases}
\end{equation}
together with 
\begin{equation*}
\begin{array}{ll}
  &z_1(v^r(t,x), u^r(t,x))=  z_1 (v_L, u_L)=z_1 (v_R,u_R),\\[3mm]
  &s(v^r(t,x), \theta^r(t,x))=s(v_L, \theta_L)=s(v_R,\theta_R),
\end{array}
\end{equation*}
where $\di z_1(v,u)=u+\int^v \lambda_1(s,v^\prime)dv^\prime$ and $s(v,\theta)$ are the 1-Riemann invariants to the Euler equations \eqref{E}.

For the 2-contact discontinuity, given any state $(v_{L}, u_{L}, \theta_{L})$,  there exists the 2-contact discontinuity curve $CD_2(v_{L}, u_{L}, \theta_{L})$ corresponding to the eigenvalue $\l_2\equiv 0$, which can be defined by
\begin{equation*}
 CD_2 (v_L, u_L, \theta_L):=\Big{ \{} (v, u, \theta)\Big{ |}u=u_L ,~p(v,\theta)=p(v_L, \theta_L)\Big{ \}}.
\end{equation*}
Then, for any initial values \eqref{ELR} satisfying $(v_{R}, u_{R}, \theta_{R})\in CD_2(v_{L}, u_{L}, \theta_{L})$, the 2-contact discontinuity $(v^c,u^c,\theta^c)$ is uniquely determined by 
\begin{equation*} 
	(v^c,u^c,\theta^c)(t,x)= \begin{cases}
		(v_L,u_L,\theta_L) , \qquad x < 0, \\
		(v_R,u_R,\theta_R), \qquad x > 0,
	\end{cases}
\end{equation*}
as the solution of the Riemann problem \eqref{E}-\eqref{ELR}.

For the 3-shock wave, given any state $(v_{L}, u_{L}, \theta_{L})$, the 3-shock curve $S_3$ associated with the eigenvalue $\lambda_3>0$ can be derived from the Rankine-Hugoniot conditions, as follows:
\begin{equation*}
\begin{aligned}
&-\sigma(v_L-v)-(u_L-u)=0,\\
& -\sigma(u_L-u)+(p(v_L,\theta_L)-p(v,\theta))=0,\\
& -\sigma(E_L-E)+(p(v_L,\theta_L)u_L-p(v,\theta)u)=0,
\end{aligned}
\end{equation*}
where $E_L=e(v_L,\theta_L)+\frac{u_L^2}{2}$, and $\sigma=\sqrt{-\frac{p(v,\theta)-p(v_L,\theta_L)}{v -v_L}}>0$ for $v_L<v$ satisfies Lax entropy condition.

Thus, for any initial values \eqref{ELR} satisfying $(v_R, u_R,\theta_{R}) \in S_3 (v_L, u_L, \theta_L)$, the solution $(v^{s}, u^{s}, \theta^{s})$ of the Riemann problem \eqref{E}-\eqref{ELR} is 3-shock wave:  
\begin{equation*} 
	(v^s, u^s, \theta^s)(t, x) = \begin{cases}
		(v_L, u_L, \theta_L), & x < \sigma t, \\
		(v_{R}, u_{R}, \theta_{R}), & x > \sigma t.
	\end{cases}
\end{equation*}

For the generic Riemann solutions case, given two states $(v_-, u_-, \theta_-),~(v_+, u_+, \theta_+) \in \bbr_+\times\bbr\times\bbr_+$, possibly nearby, it can be shown that there exist two (unique) intermediate states $(v_*, u_*, \theta_*),~(v^*, u^*, \theta^*) \in \bbr_+\times\bbr\times\bbr_+$ such that $(v_*,u_*,\theta_*)\in R_1(v_-,u_-,\theta_-)$, $(v^*,u^*,\theta^*)\in CD_2(v_*,u_*,\theta_*)$ and $(v_+,u_+,\theta_+)\in S_3(v^*,u^*,\theta^*)$.
Then we can obtain the solution $(v,u,\theta)$  to Euler system \eqref{E}-\eqref{Ei}, which consists of three associated waves
\begin{equation}\label{ICW}
	(v,u, \theta)(t,x)=(v^r,u^r, \theta^r)(t,x)+(v^c,u^c, \theta^c)(t,x)+(v^s,u^s, \theta^s)(t,x)-(v_*, u_*, \theta_*)-(v^*,u^*,\theta^*).
\end{equation}
 Similarly, we can also consider the composite wave composed of 1-shock wave, 2-contact discontinuity wave, and 3-rarefaction wave.

Subsequently, we focus primarily on the viscous waves associated with the Riemann solution \eqref{ICW} for system \eqref{E}, specifically the viscous contact discontinuity and viscous shock wave. For the viscous contact discontinuity corresponding to the inviscid 2-contact discontinuity wave connecting the states $(v_{*}, u_{*}, \theta_{*})$ with $(v^{*}, u^{*}, \theta^{*})$, it can be defined by \cite{HXY} as follows:
\begin{equation}\label{vc}
	\begin{aligned}
		&  v^{C}(\frac{x}{\sqrt{1+t}})
		=  \frac{R\Theta}{p_*},\\
		&u^{C}(t,\frac{x}{\sqrt{1+t}})
		= u_* +\frac{(\gamma-1)\kappa\Theta_x}{R\gamma\Theta},\\
		& \theta^{C}(\frac{x}{\sqrt{1+t}})
		= \Theta,
	\end{aligned}
\end{equation}
where $p_*=p(v_*, \theta_*)$ and $\Theta=\Theta\left(\frac{x}{\sqrt{1+t}}\right)$ is the unique self-similar solution to the following nonlinear diffusion equation
\begin{equation}
	\begin{cases}
		\Theta_t= \frac{(\gamma-1)\kappa p_*}{R^2 \gamma}\left(\frac{\Theta_x}{\Theta}\right)_x, \cr
		\Theta(t, -\infty)=\theta_{*},~\Theta(t, +\infty)=\theta^{*}.
	\end{cases}
\end{equation}
For the 3-viscous shock wave also called the traveling wave $(v^S, u^S, \theta^S)(\xi)(\xi=x-\sigma t)$ which connects the states $(v^*, u^*, \theta^*)$ with $(v_+,u_+,\theta_+)$, it can be defined by \cite{FM} from the following ODEs:
\begin{equation}\label{VS}
\begin{cases}
-\sigma (v^S)'- (u^S)'=0,\\[3mm]
-\sigma (u^S)'+(p^S)'=0,\\[3mm]
-\sigma (E^S)' +(p^S u^S)'
=\left(\kappa\frac{(\theta^S)'}{v^S}\right)',\\[3mm]
(v^S,u^S,\theta^S)(-\infty)=(v^*,u^*,\theta^*),\qquad
(v^S,u^S,\theta^S)(+\infty)=(v_+,u_+,\theta_+),
\end{cases}
\end{equation}
where $E^S=\frac{R}{\gamma-1}\theta^S+\frac{(u^S)^2}{2}$, $p^S=p(v^S, \theta^S)$, the shock speed $\sigma=\sqrt{-\frac{p(v_+,\theta_+)-p(v^*,\theta^*)}{v_+-v^*}}$ satisfies the Lax entropy condition
\begin{equation}\label{ec}
	\lambda_3(v_+,\theta_+)<\sigma<\lambda_3(v^*,\theta^*).
\end{equation}
The properties of viscous contact discontinuity and viscous shock wave will be introduced in the next section. 

As viscous ansatz, the composite wave, defined as 
\beq\label{SW}
\begin{array}{ll}
	\di (\tilde v, \tilde u,\tilde \theta) (t,x):=
    \Big( v^r(\frac xt)+ v^C(\frac{x}{\sqrt{1+t}})+ v^S(x-\s t -\mb X(t))-v_*-v^*,\\[4mm]
	\di \qquad\qquad\qquad\qquad u^r(\frac xt)+ u^C(t,\frac{x}{\sqrt{1+t}})+ u^S(x-\s t-\mb X(t))-u_*-u^*, \\[4mm]
	\di \qquad\qquad\qquad\qquad  \theta^r(\frac xt)+ \theta^C(\frac{x}{\sqrt{1+t}})+ \theta^S(x-\s t-\mb X(t))-\theta_*-\theta^*\Big),
\end{array}
\eeq
is composed of the 1-rarefaction wave $(v^r,u^r,\theta^r)(\frac xt)$ defined in \eqref{rarefaction} with end states $(v_-,u_-,\theta_-)$ and $(v_*,u_*,\theta_*)$, the 2-viscous contact wave $(v^C, \theta^C)(\frac{x}{\sqrt{1+t}})$ and $u^C(t, \frac{x}{\sqrt{1+t}})$ defined in \eqref{vc} with end states $(v_*,u_*,\theta_*)$ and $(v^*,u^*,\theta^*)$, and  the 3-viscous shock wave shifted by $\mb X(t)$ (to be defined in \eqref{X(t)}) $(v^S,u^S,\theta^S)(x-\sigma t -\mb X(t))$ of  \eqref{VS} with end states $(v^*,u^*,\theta^*)$ and $(v_+,u_+,\theta_+)$.
\subsection{The main results}
\begin{theorem}\label{thm:main}
    Given any state $(v_{-}, u_{-}, \theta_{-})\in\bbr_+\times\bbr\times\bbr_+$, consider the intermediate states $(v_*,u_*,\theta_*)$, $(v^*,u^*,\theta^*)$ and the right state $(v_{+},u_{+},\theta_{+})$ satisfying 
$(v_*,u_*,\theta_*)\in R_1(v_-,u_-,\theta_-)$, 
$(v^*,u^*,\theta^*)\in CD_2(v_*,u_*,\theta_*)$, and 
$(v_+,u_+,\theta_+)\in S_3(v^*,u^*,\theta^*)$, 
along with the associated composite wave $(\tilde v, \tilde u,\tilde \theta) (t,x)$ defined in \eqref{SW}. Then, there exist positive constants $\delta_0$ and $\eps_0$ such that if the wave strength satisfies
\[
    |v_+-v^*|+|v^*-v_*|+|v_*-v_-|\leq \delta_0,
\]
and the initial perturbation satisfies
\begin{equation}\label{i-p}
    \sum_{\pm} \|(v_0-v_\pm, u_0-u_\pm,\theta_0-\theta_\pm)\|_{L^2(\bbr_\pm)} + \|(v_{0x},u_{0x},\theta_{0x})\|_{H^1(\bbr)} < \eps_0,
\end{equation}
where $\bbr_- := (-\infty,0)$, then the Cauchy problem \eqref{NS}-\eqref{state} admits a unique global-in-time solution $(v,u,\theta)(t,x)$. Furthermore, there exists an absolutely continuous time-dependent shift $\mb{X}(t)$ (defined in \eqref{X(t)}) such that
	\begin{align}
		\begin{aligned}\label{ext-main}
			&(v- \tilde v, u-\tilde{u}, \theta-\tilde \theta)(t,x) \in C(0,+\infty;H^1(\bbr)),\\
			& (v_{xx}, u_{xx}, \theta_{xx})(t,x) \in C(0,+\infty;L^2(\bbr)),\\
			& \theta_{xxx}(t,x)-\theta^S_{xxx}(x-\sigma t-\mb X(t))\in L^2(0,+\infty; L^2(\bbr)),
		\end{aligned}
	\end{align}
    and for any $t>0$,
    \begin{align}
		\begin{aligned}\label{ext-main2}
			\|(v- \tilde v, u-\tilde{u}, \theta-\tilde \theta)(t,\cdot)\|_{L^2(\bbr)} + \|(v, u, \theta)_{x}(t,\cdot)\|_{H^1(\bbr)}\leq C(\eps_0+\delta_0^{\frac{1}{4}}).
		\end{aligned}
	\end{align}
	In addition, the asymptotic behaviors are as follows, as $t\to+\infty$,
	\begin{equation}\label{con}
		\begin{array}{l}
			\di \sup_{x\in\bbr}\Big| (v- \tilde v, u-\tilde{u}, \theta-\tilde \theta)(t,x) \Big| \to 0,
		\end{array}
	\end{equation}
	and
	\begin{equation}\label{as}
		\lim_{t\rightarrow+\infty} |\dot {\mb X}(t) |=0,
	\end{equation}
    which means that $\mathbf{X}(t)$ grows at most sub-linearly in time and the shifted viscous shock wave retains the original traveling wave profile time-asymptotically.
\end{theorem}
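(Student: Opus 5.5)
The proof follows the standard continuation scheme. First establish local existence in the space of \eqref{ext-main}. Then close an a priori estimate under the smallness assumption
\[
\sup_{0\le t\le T}\Big(\|(v-\tilde v,u-\tilde u,\theta-\tilde\theta)(t)\|_{H^1}+\|(v,u,\theta)_{xx}(t)\|_{L^2}\Big)\le \chi ,
\]
with $\chi$ small. Since the rarefaction wave $(v^r,u^r,\theta^r)(x/t)$ is only Lipschitz, replace it by the standard smooth approximate rarefaction built from the Burgers solution $w_t+ww_x=0$, $w(0,x)=\frac{w_-+w_+}{2}+\frac{w_+-w_-}{2}\tanh x$. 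This smooth wave tends to the inviscid one in $L^\infty$ as $t\to\infty$, so \eqref{con} is unaffected.

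The shift is defined, following Kang--Vasseur--Wang, by the ODE
\[
\dot{\mathbf X}(t)=-\frac{M}{\delta_S}\int_{\mathbb R} a(\xi)\Big(\partial_\xi h^S\,(u-\tilde u)\ \text{-type terms}\Big)\,dx,
\]
weighted so that the bad term $\dot{\mathbf X}\int a\,(\cdot)^S_\xi(\cdot)$ becomes $-\frac{\delta_S}{M}|\dot{\mathbf X}|^2$. Here $a(\xi)=1+\frac{\lambda}{\delta_S}(p^*-p^S(\xi))$ with $\delta_S\ll\lambda\ll 1$. Because the momentum equation has no viscous term, the effective velocity $h=u-\frac{\mu}{\cdot}(\ldots)$ used for Navier--Stokes is unavailable. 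I would therefore work directly with $(v,u,\theta)$ and the relative entropy
\[
\eta(U|\tilde U)=R\tilde\theta\,\Phi\!\big(\tfrac{v}{\tilde v}\big)+\tfrac12(u-\tilde u)^2+\tfrac{R}{\gamma-1}\tilde\theta\,\Phi\!\big(\tfrac{\theta}{\tilde\theta}\big),\qquad \Phi(z)=z-1-\log z .
\]
The weighted estimate $\frac{d}{dt}\int a\,\eta(U|\tilde U)\,dx$ produces three kinds of terms:
\begin{itemize}
\item the good shift term;
\item the good rarefaction term $\int \tilde u^r_x\,|p-\tilde p|^2$;
\item the heat dissipation $\int|(\theta-\tilde\theta)_x|^2$, together with the $a$-weighted shock terms $\frac{\lambda}{\delta_S}\int|a_x|\,|p-\tilde p|^2$, or rather its 1D projection.
\end{itemize}

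The core step is the sharp Poincaré-type inequality of Kang--Vasseur on the shock layer, applied after the change of variable $y=\frac{p^*-p^S}{p^*-p_+}$. It controls the cubic bad term $\int a_x(p-\tilde p)^3$ and the leading diffusion/shock interaction. The catch is that only $\theta$ is dissipated, so the Poincaré argument must be run on the $\theta$-component, or on $p-\tilde p$ expressed through $\theta-\tilde\theta$ modulo terms in $v-\tilde v$.

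This is the main obstacle. The shock-localized terms in $v-\tilde v$ and $u-\tilde u$ appear with weight $|v^S_x|$ but have no dissipation of their own. I would handle them in two moves:
\begin{itemize}
\item Use the shock ODE \eqref{VS} relations $u^S_\xi=-\sigma v^S_\xi$ and $p^S_\xi=\sigma u^S_\xi$ to decompose $(v-\tilde v,u-\tilde u)$ into a component aligned with $\theta-\tilde\theta$, which is controlled by dissipation plus Poincaré, and a transverse part. This is the transverse-wave structure.
\item Bound the transverse part by separate space--time estimates. For $v$, use the momentum equation: multiply $u_t+p_x=0$ by $-(v-\tilde v)_x$ to recover $\int_0^t\|(v-\tilde v)_x\|^2$ from $\|(\theta-\tilde\theta)_x\|^2$, in Kawashima--Shizuta fashion. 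For $u$, use $v_t=u_x$ and multiply the energy equation by $(u-\tilde u)_x$-type correctors.
\end{itemize}
The localized zeroth-order terms $\int|v^S_x|\,|v-\tilde v|^2$ are then bounded by $C\delta_S\|(v-\tilde v)_x\|^2$ plus boundary contributions, via $\|f\|_{L^\infty}^2\le\|f\|\,\|f_x\|$ and $\|v^S_x\|_{L^1}\sim\delta_S$.

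The interaction errors between different waves are exponentially small in $t$ with rate $\delta_S$, by the separation of wave speeds and using $\dot{\mathbf X}$ small. Together with the contact-wave errors, which are $O(\delta_C(1+t)^{-1}e^{-cx^2/(1+t)})$ and are handled by the heat-kernel inequality of Huang--Li--Matsumura, they contribute the $\delta_0^{1/4}$ in \eqref{ext-main2}. Higher-order estimates for $(v,u,\theta)_x$ and $(v,u,\theta)_{xx}$ follow by differentiating the system and repeating the transfer argument; only $\theta_{xxx}$ gains time-integrability, which gives the last line of \eqref{ext-main}.

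Continuation then gives global existence. For \eqref{con}, let $g(t)=\|(v-\tilde v,u-\tilde u,\theta-\tilde\theta)_x(t)\|^2$. Then $g\in L^1(0,\infty)$ and $g'\in L^1(0,\infty)$, so $g(t)\to0$, and the Gagliardo--Nirenberg inequality yields uniform convergence. Finally, \eqref{as} follows from $|\dot{\mathbf X}|\le C\|(\cdot)-\tilde{(\cdot)}\|_{L^\infty}$ together with \eqref{con}.
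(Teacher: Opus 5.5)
\textbf{The gap: controlling the shock-localized zeroth-order terms.} Most of your architecture matches the paper: the smooth approximate rarefaction, a shift ODE with a weight $a$ increasing across the 3-shock, the weighted relative entropy, Poincar\'e on $y\in(0,1)$, Kawashima--Shizuta space--time estimates for $(\phi,\psi)_\xi$, the higher-order estimates, and the $g,g'\in L^1$ argument for \eqref{con}--\eqref{as}. The gap is how you control the shock-localized zeroth-order terms in the directions transverse to the one aligned with $\theta-\tilde\theta$.

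You propose to bound them by space--time derivative estimates, via $\int|v^S_x|f^2\le\|v^S_x\|_{L^1}\|f\|_{L^\infty}^2$ and $\|f\|_{L^\infty}^2\le\|f\|\|f_x\|$. This yields $C\delta_S\|f\|_{L^2}\|f_x\|_{L^2}$, not $C\delta_S\|f_x\|^2$. With only $\|f\|_{L^2}\in L^\infty_t$ and $\|f_x\|_{L^2}\in L^2_t$ available, $\int_0^t\delta_S\|f\|\|f_x\|\,d\tau$ cannot be bounded uniformly in $t$. In fact no inequality $\int|v^S_x|f^2\le C\|f_x\|^2$ holds: a plateau $f\equiv c$ of width $L\to\infty$ over the layer keeps the left side $\approx\delta_S c^2$ while $\|f_x\|^2\to0$.

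This is exactly the obstruction the $a$-contraction exists to remove, and it cannot be delegated to derivative estimates. The paper's space--time estimates (Lemmas \ref{lem-energy3}, \ref{lem-energy4}) themselves carry $(\varepsilon_1+\delta_0)\int_0^t\mathcal{G}^S$ and boundary terms $\|(\psi,\vartheta)(t)\|_{L^2}^2$ on the right, so your scheme is also circular.

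\textbf{How the paper handles the transverse directions.} Contrary to your ``1D projection'' remark, the weight itself controls a two-dimensional subspace. To leading order, $a'_\xi\big[(u-\bar u)(p-\bar p)-\sigma\bar\theta\eta(U|\bar U)\big]$ equals $-a'_\xi$ times
\[
\frac{R\sigma^*\theta^*}{2(v^*)^2}\Big[(v-\bar v)+\frac{u-\bar u}{\sigma^*}\Big]^2+\frac{R\sigma^*}{2(\gamma-1)\theta^*}\Big[(\theta-\bar\theta)-\frac{(\gamma-1)\theta^*}{v^*\sigma^*}(u-\bar u)\Big]^2 .
\]
Since $a'=\frac{\lambda}{\delta_S}v^S_\xi$ with $\lambda/\delta_S\gg1$, the resulting good terms $\mathbf{G}_1+\mathbf{G}_2$ absorb all $O(|v^S_\xi|)$ transverse contributions, at a loss of only $(\delta_S/\lambda)^{1/2}$.

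\textbf{The remaining direction and the missing constant check.} Only the direction $w=u-\bar u$ is left. There the $\theta$-dissipation $\mathbf{D}$, through the alignment $\theta-\bar\theta\approx\frac{(\gamma-1)\theta^*}{v^*\sigma^*}w$ and Poincar\'e, must beat $\mathbf{B}_1\le\alpha^*\delta_S\int_0^1w^2dy$. You assert this rather than check it, and it is precisely where the missing velocity viscosity bites. The paper needs the new profile identity \eqref{sharp-D}, derived from the integrated energy equation of the shock, to obtain exactly $\mathbf{D}\ge2\alpha^*\delta_S\int_0^1w^2dy-4\alpha^*\delta_S\bar w^2$. It then chooses $M=3\alpha^*/(\sigma^*)^2$ so that $-\frac{\delta_S}{2M}|\dot{\mathbf{X}}|^2$ absorbs the $\bar w^2$ term.

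Your derivative space--time estimates are needed only to close the small-coefficient terms $C(\varepsilon_1+\delta_0)\|(\phi,\psi)_\xi\|^2$ and the higher-order estimates, not to produce $\mathcal{G}^S$.
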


\begin{remark}
 Theorem \ref{thm:main} states that whenever the far-field states $(v_\pm, u_\pm, \theta_\pm)$ in \eqref{in} are connected by the superposition of a 1-rarefaction wave, a 2-contact discontinuity, and a 3-shock wave, the solution of the non-viscous and heat-conductive system \eqref{NS} or \eqref{NS-0} converges in the long-time limit to the composite wave consisting of an inviscid self-similar rarefaction wave, a viscous contact wave, and a viscous shock wave with shift $\mathbf{X}(t)$. The same conclusion holds when the far-field states $(v_\pm, u_\pm, \theta_\pm)$ in \eqref{in} are connected by a 1-shock wave, a 2-contact discontinuity, and a 3-rarefaction wave.
\end{remark}


\subsection{Main difficulties and key ideas of the proof}

First, the coexistence of shocks, rarefaction waves and contact discontinuities in the composite wave creates an incompatibility between the classical anti-derivative framework for shock stability and the direct energy method for rarefaction waves. To overcome this difficulty, we employ the $a$-contraction with shifts method \cite{KVW23, KVW25}, which introduces a time-dependent shift to establish shock stability without resorting to the anti-derivative approach.

Although we adopt this method, the absence of velocity viscosity in system \eqref{NS} creates an essential difficulty: direct dissipative control of the velocity derivative $(u-\bar u)_\xi$ is no longer available. In \cite{KVW25}, the velocity dissipation $\mu\left\|(u-\bar u)_\xi\right\|_{L^2(\bbr)}^{2}$ plays two essential roles in the proof: (i) in the relative entropy estimate, the viscous momentum equation with $\mu>0$ is used to create a new estimate that can produce the good term needed to apply the Poincar\'e-type inequality \eqref{poincare}, thereby controlling all the nonlinear terms; and (ii) in the higher-order derivative estimates, it directly provides higher-order dissipation for the velocity, which is needed to close the higher-order energy estimates. Neither of these direct controls is available for system \eqref{NS}.

To address (i), we first exploit the structure of the equations and
the properties of the shock wave to establish the new estimate
\eqref{sharp-D}, which is based on the energy equation rather than
the viscous momentum equation used in \cite{KVW25}. More precisely,
\eqref{sharp-D} converts the available temperature dissipation into
a good term. Then, we transform terms involving velocity and density into temperature-related terms, and subsequently control the main bad terms using the Poincar\'{e}-type inequality \eqref{poincare}.

For (ii), due to the absence of velocity viscosity, the momentum equation provides no direct parabolic smoothing, making it non-trivial to close the energy estimates. Since the system \eqref{NS} structurally satisfies the ``Kawashima-Shizuta condition'', one can expect to recover the dissipation for both the density and velocity. Indeed, by linearizing the pressure and cross-differentiating the mass and momentum equations, we formally obtain the following coupled wave-type system for the perturbations \(\phi\) and \(\psi\) defined in \eqref{per}:
\begin{equation}\label{eq:rough_wave_intro}
\begin{aligned}
    (\partial_t-\sigma\partial_\xi)^2
    \begin{pmatrix}
        \phi\\
        \psi
    \end{pmatrix}
    -
    c^2\partial_{\xi\xi}
    \begin{pmatrix}
        \phi\\
        \psi
    \end{pmatrix}   =
    \begin{pmatrix}
        -(p_\theta\vartheta)_{\xi\xi}\\
        -(\partial_t-\sigma\partial_\xi)
        (p_\theta\vartheta)_\xi
    \end{pmatrix}
    +\mathcal{N},
\end{aligned}
\end{equation}
where $c = \sqrt{-p_v} > 0$, and $\mathcal{N}$ denotes the nonlinear remainders. Guided by the above wave structure \eqref{eq:rough_wave_intro}, we introduce new energy functionals to carry out separate space-time estimates for the density and the velocity. This procedure successfully transfers the strict dissipation from the temperature to the hyperbolic variables, fully compensating for the loss of regularity and dissipation resulting from the absence of velocity viscosity, and thereby closing the complete \textit{a priori} estimates.

The remainder of this paper is structured as follows. In Section 2, we introduce the properties of rarefaction waves, viscous shock waves, and viscous contact discontinuities, with a primary focus on establishing novel shock wave properties based on the structure of the equations. In Section 3, we sketch how both local existence and the main Theorem \ref{thm:main} follow from the {\it a priori} estimates derived as Proposition \ref{prop2}. Sections 4 and 5 provide detailed proofs of Proposition \ref{prop2}. In Section 4, we first obtain the $L^2$ estimates via the relative entropy and $a$-contraction with shifts method. Section 5 then derives the higher-order energy estimates along with the space-time estimates of the density and velocity, which in turn lead to the full energy estimates.

\noindent$\bullet$ {\bf Notations:} We use the following notations  for notational simplicity. \\
1. $C$ denotes a generic positive constant of order $O(1)$ that may differ from line to line, but is independent of the small parameters $\delta_0, \varepsilon_1, \delta_S, \delta_R, \delta_C$, $\lambda$ (see \eqref{weight}) and the time $T$.
\\
2. For any function $f : \bbr_+\times \bbr\to \bbr$ and any time-dependent shift $\mb X(t)$, 
\[
f^{\pm \mb X}(t, \xi):=f(t,\xi\pm \mb X(t)).
\]
To simplify the notation, we drop the explicit arguments of the waves whenever the context is clear: for instance,
\begin{align*}
	\begin{aligned}
		&u^R:=u^R(t,\xi+\s t),\quad  u^{\mb X}:=u(t,\xi+\s t + \mb X(t)),\\
		&u^C:=u^C(t,\xi+\s t),\quad  (u^C)^{\mb X}:=u^C(t,\xi+\s t + \mb X(t)),\\
		&\bar u^{\mb X}:=u^R(t,\xi+\s t + \mb X(t))+u^C(\frac{\xi+\s t+ \mb X(t)}{\sqrt{1+t}})+u^S(\xi)-u_*-u^*.
	\end{aligned}
\end{align*}
3. For functional spaces, $H^s(\mathbb{R})$ denotes the $s$-th order Sobolev space with the norm  
\[  
\|f\|_{H^s(\mathbb{R})} := \sum_{j=0}^{s} \|\partial_x^j f\|_{L^2(\mathbb{R})}.  
\]  

\section{Preliminaries} \label{sec-preli}
\setcounter{equation}{0}

Before presenting the proof, we collect in this section some well-known results that will be used throughout the rest of the paper. For simplicity, we consider the following system in non-divergence form which is equivalent to the original system \eqref{NS}:
\begin{equation}\label{NS-0}
\begin{cases}
v_t-u_x=0,\\
u_t+p(v,\theta)_x=0,\\
\frac{R}{\gamma-1}\theta_t+p(v,\theta)u_x
=\left(\kappa\frac{\theta_x}{v}\right)_x.
\end{cases}
\end{equation}

\subsection{Approximate rarefaction wave}

We now recall important properties of the approximate 1-rarefaction wave $(v^{R}, u^{R}, \theta^{R})$ that approximate the 1-rarefaction wave  $(v^{r}, u^{r}, \theta^{r})$. Motivated by \cite{MN86}, we construct the approximate 1-rarefaction wave $(v^{R}, u^{R}, \theta^{R})$ as a smooth solution of the Euler system \eqref{E}, which can be defined by
\begin{align}\label{AR}
	\begin{aligned} 
		&\lambda_{1-}:=\lambda_1(v_-,\theta_-) = w_-,\ \lambda_{1*}:=\lambda_1(v_*, \theta_*) = w_*,\\
		&\lambda_1(v^R,\theta^R)(t, x) = w(1+t, x), \\
		&u^R = u_- - \int_{v_-}^{v^R} \lambda_1(s_-, v^\prime) \, dv^\prime,\\
		&s(v^R, \theta^R)(t, x)=s(v_-, \theta_-)=s(v_*,\theta_*),
	\end{aligned}
\end{align}
where the entropy function $s$ and $\lambda_1$ are defined as \eqref{entropy}, and $w(t, x)$ is the solution of the initial problem for the typical Burgers equation
\begin{equation} \label{ABE}
	\begin{cases}
		\displaystyle w_t + ww_x = 0, \\
		\displaystyle w(0, x) = w_0(x) = \frac{w_* + w_-}{2} + \frac{w_* - w_-}{2} \tanh x,
	\end{cases}
\end{equation}
with $w_- = \lambda_1(v_-,\theta_-), w_* = \lambda_1(v_*,\theta_*)$.

And it is easy to check that the above approximate rarefaction wave $(v^R, u^R, \theta^R)$ satisfies the Euler system:
\begin{equation}  \label{ARW}
	\begin{cases}
		\displaystyle v^R_t -u^R_x = 0, \\
		\displaystyle u^R_t + p(v^R, \theta^R)_x = 0,\\
		\di \frac{R}{\gamma-1}\theta^R_t+p(v^R, \theta^R) u^R_x=0. \\
	\end{cases}
\end{equation}

Set $\delta_R:=|v_*-v_-|$ denote the strength of the rarefaction wave and we have $\delta_R\sim |u_*-u_-|\sim |\theta_*-\theta_-|$. The following lemma comes from \cite{MN86}.
\begin{lemma} \label{lemma1.2}
	The smooth approximate 1-rarefaction wave $(v^R, u^R, \theta^R)(t, x)$ defined in \eqref{AR} satisfies the following properties:
	
	(1)~$ u^R_x = \frac{2 v^R}{(\gamma + 1) }w_x > 0,$ $ v^R_x = \frac{v^R}{\sqrt{R\gamma\theta^R}} u^R_x>0$ and $\theta^R_x=-\frac{(\gamma-1)\theta^R}{v^R}v^R_x<0,$  $\forall x \in \bbr,\ t \geq 0$.
	
	(2)~The following estimates hold for all $t \geq 0$ and $p \in [1, + \infty]$:
	\begin{align*}
		&\|(v^R_x, u^R_x, \theta^R_x)\|_{L^p} \leq C \min\{\delta_R, \delta_R^{1/p}(1+t)^{-1+1/p}\}, \\
		&\|(v^R_{xx}, u^R_{xx}, \theta^R_{xx})\|_{L^p} \leq C \min\{\delta_R, (1+t)^{-1}\}, \\
        &\|(v^R_{xxx}, u^R_{xxx}, \theta^R_{xxx})\|_{L^2} \leq C \min\{\delta_R, (1+t)^{-1}\}, \\
		& |u^R_{xx}| \leq C |u^R_{x}|,\quad |v^R_{xx}| \leq C |v^R_{x}|,\quad  |\theta^R_{xx}| \leq C |\theta^R_{x}| ,\\
		& |u^R_{xxx}| \leq C |u^R_{x}|,\quad |v^R_{xxx}| \leq C |v^R_{x}|,\quad  |\theta^R_{xxx}| \leq C |\theta^R_{x}| ,\quad \forall x\in\bbr.
	\end{align*}
	
	(3)~ For $x\geq \lambda_{1*}(1+t), t\geq 0,$ it holds that
	\begin{align*}
		&|(v^R, u^R, \theta^R)(t, x)-(v_*, u_*, \theta_*)| \leq C\delta_R \ e^{-2|x-\lambda_{1*}(1+t)|}, \\
		&|(v^R_x, u^R_x, \theta^R_x)(t, x)|\leq C\delta_R\ e^{-2|x-\lambda_{1*}(1+t)|}.
	\end{align*}
	
	(4)~ For $x\le\lambda_{1-}(1+t), t\geq 0,$ it holds that
	\begin{align*}
		&|(v^R, u^R, \theta^R)(t, x)-(v_-,u_-, \theta_-)| \leq C\delta_R \ e^{-2|x-\lambda_{1-}(1+t)|}, \\
		&|(v^R_x, u^R_x, \theta^R_x)(t, x)|\leq C\delta_R\ e^{-2|x-\lambda_{1-}(1+t)|}.
	\end{align*}

    (5)~The following estimates hold for all $t > 0$ and $p \in (1, + \infty)$:
    \begin{align*}
        &\|(v^R(t, \cdot) - v^r(\frac{\cdot}{t} ), u^R(t, \cdot) - u^r(\frac{\cdot}{t} ),\theta^R(t, \cdot) - \theta^r(\frac{\cdot}{t} ))\|_{L^p(\mathbb{R})} \le C \delta_R^{\frac{1}{p}} (1+t)^{-1+\frac{1}{p}},\\
        &\|(v^R(t,\cdot)-v^r(\frac{\cdot}{t}), u^R(t,\cdot)-u^r(\frac{\cdot}{t}), \theta^R(t,\cdot)-\theta^r(\frac{\cdot}{t}))\|_{L^\infty(\mathbb R)}\le C\delta_R \frac{1+\log(1+t\delta_R)}{1+t\delta_R}.
    \end{align*}
\end{lemma}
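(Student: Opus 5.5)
The plan is to reduce everything to the explicit Burgers solution $w$ of \eqref{ABE}, following \cite{MN86}. Since the 1-Riemann invariants $s$ and $z_1$ are constant along the approximate wave by \eqref{AR}, the map $w\mapsto(v^R,u^R,\theta^R)$ is a smooth function of a single scalar on the compact range $[w_-,w_*]$. Its derivatives are bounded by constants depending only on the states, and $\lambda_1$ is strictly monotone in $v$ along the isentrope. So every bound on $(v^R,u^R,\theta^R)$ follows from the corresponding bound on $w$ by the chain rule.

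For (1), I would differentiate $\lambda_1(v^R,s_-)=w$. Since $\lambda_1=-\sqrt{\gamma A}\,e^{\frac{\gamma-1}{2R}s}v^{-\frac{\gamma+1}{2}}$, this gives $\partial_v\lambda_1=-\frac{\gamma+1}{2v}\lambda_1$, hence $v^R_x=\frac{2v^R}{(\gamma+1)|\lambda_1|}w_x$. Next, $u^R_x=-\lambda_1v^R_x=|\lambda_1|v^R_x$, which with $|\lambda_1|=\sqrt{R\gamma\theta^R}/v^R$ yields both stated formulas. Finally, $s=$const gives $\theta v^{\gamma-1}=$const, hence $\theta^R_x=-(\gamma-1)\frac{\theta^R}{v^R}v^R_x$. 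Positivity of $w_x$ holds because $w_0'>0$, as $w_*>w_-$ for a 1-rarefaction, and the characteristics $x=x_0+w_0(x_0)t$ then give $w_x=\frac{w_0'(x_0)}{1+tw_0'(x_0)}>0$.

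For (2)–(5), I would use this characteristic representation. The bound $|w_x|\le\min\{w_0'(x_0),1/t\}$ together with $\int w_x\,dx=w_*-w_-\sim\delta_R$ gives $\|w_x\|_{L^1}\le C\delta_R$ and $\|w_x\|_{L^\infty}\le C\min\{\delta_R,(1+t)^{-1}\}$, and interpolation yields the $L^p$ bound. For the pointwise bounds $|w_{xx}|\le C|w_x|$ and $|w_{xxx}|\le C|w_x|$, I would differentiate the characteristic formula. This gives $w_{xx}=\frac{w_0''}{(1+tw_0')^3}$, and $|w_0''|\le 2w_0'$ for the $\tanh$ profile, so $|w_{xx}|\le 2w_x$; the third derivative is handled similarly using $|w_0'''|\le Cw_0'$. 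The chain-rule terms contribute $w_x^2\le C\delta_R w_x$, which keeps the pointwise comparisons and the $L^p$ bounds for second and third derivatives. Parts (3)–(4) follow from $w_0'\le C\delta_R e^{-2|x_0|}$ and $|w_0-w_*|\le C\delta_R e^{-2x_0}$ for $x_0>0$, transported along characteristics. For $x\ge\lambda_{1*}(1+t)$ the foot $x_0$ satisfies $x-\lambda_{1*}(1+t)\le x_0+O(1)$, because $w_0\le w_*$. Part (4) is symmetric.

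Part (5) is the main obstacle. I would compare $w(1+t,x)$ with the centered solution $x/t$ clipped to $[w_-,w_*]$. Each point lies on a characteristic from some $x_0$, so the difference is $O\big(\frac{|x_0|+1}{t}\big)$ inside the fan region. Outside the fan it is bounded by $\delta_R e^{-2|x_0|}$. Choosing the cutoff $|x_0|\sim\log(1+t\delta_R)$, and using that the fan has width about $\delta_R t$, should give the stated $L^\infty$ and $L^p$ rates. The delicate part is tracking the logarithm; I would follow the explicit computation in \cite{MN86}.
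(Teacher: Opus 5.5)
Your reduction to the Burgers solution $w(1+t,x)$ through the Riemann invariants, reading everything off the characteristic formula, is the Matsumura--Nishihara argument that the paper cites in place of a proof. Your treatment of (1), of the first-derivative bounds, of the pointwise comparisons, and of (3)--(4) is correct.

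\textbf{First gap: time decay of the higher derivatives in (2).} The comparisons $|w_{xx}|\le 2w_x$ and $|w_{xxx}|\le Cw_x$ only give $\|w_{xx}\|_{L^p}\le C\min\{\delta_R,\delta_R^{1/p}(1+t)^{-1+1/p}\}$. That is no decay at all for $p=1$, and only $\|w_{xxx}\|_{L^2}\le C\delta_R^{1/2}(1+t)^{-1/2}$. Bounding the chain-rule terms by $w_x^2\le C\delta_R w_x$ likewise throws the decay away for small $p$. These rates matter: the paper needs $\|\theta^R_{xx}\|_{L^1}\le C(1+t)^{-1}$ to make $\int_0^\infty\|Q^R_2\|_{L^1}^{4/3}dt$ finite in \eqref{v21}.

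To fix this, keep the extra powers of $1+(1+t)w_0'$ in your own formulas (all quantities at time $1+t$):
\begin{itemize}
\item $\|w_{xx}\|_{L^1}=2\|w_x\|_{L^\infty}\le C\min\{\delta_R,(1+t)^{-1}\}$, because $w_x(1+t,\cdot)$ is unimodal and vanishes at $\pm\infty$.
\item $\|w_{xxx}\|_{L^2}^2\le C\int (w_0')^2(1+(1+t)w_0')^{-7}dx_0\le C(1+t)^{-2}$, after splitting at $(1+t)w_0'(x_0)\approx 1$, i.e.\ at $|x_0|\approx\frac12\log(1+(1+t)\delta_R)$.
\item For the chain-rule terms, use $\|w_x\|_{L^\infty}\le C\min\{\delta_R,(1+t)^{-1}\}$ instead of $w_x\le C\delta_R$.
\end{itemize}

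\textbf{Second gap: the $L^p$ estimate in (5).} Multiplying the pointwise error $\frac{1+\log(1+t\delta_R)}{t}$ by the fan width $\delta_R t$ gives $\frac{1+\log(1+t\delta_R)}{t}(t\delta_R)^{1/p}$. This is a logarithm worse than the claimed $\delta_R^{1/p}(1+t)^{-1+1/p}$. You also cannot defer this bookkeeping to \cite{MN86}, which only proves the qualitative convergence $\sup_x|w-w^r|\to0$.

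The log-free bound comes from integrating in the Lagrangian variable $x_0$:
\begin{itemize}
\item In the fan, $w(1+t,x)-\frac{x}{t}=-\frac{x_0+w_0(x_0)}{t}$ and $dx=(1+(1+t)w_0'(x_0))\,dx_0$.
\item The piece carrying $(1+t)w_0'(x_0)\le C(1+t)\delta_R e^{-2|x_0|}$ contributes $C_p(1+t)\delta_R t^{-p}$ to the $p$-th power of the norm, because the exponential absorbs $(|x_0|+C)^p$.
\item The piece carrying $1$ lives on $|x_0|\le\frac12\log(1+t\delta_R)+C$ and contributes $O(\log^{p+1}(2+t\delta_R)\,t^{-p})$. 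This is lower order for $t\delta_R\ge1$; the range $t\delta_R\le1$ is trivial.
\item Outside the fan, use $|w-w^r|\le C\delta_R e^{-2|x_0|}$ with the same Jacobian.
\end{itemize}

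The constant here is necessarily $p$-dependent and must blow up as $p\to\infty$. At the fan edge $x=\lambda_{1*}t$, the error is genuinely of size $\log(t\delta_R)/t$. A $p$-uniform bound $C\delta_R^{1/p}(1+t)^{-1+1/p}$ would, letting $p\to\infty$, give an $O((1+t)^{-1})$ sup-bound, which is false.
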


\subsection{Viscous shock wave}

For the viscous shock wave, we integrate system \eqref{VS} over 
 $(\pm \infty, \xi]$. Using the total energy $E^S=\frac{R}{\gamma-1}\theta^S+\frac{(u^S)^2}{2}$, we obtain:
\begin{equation}\label{VS2}
\begin{cases}
(p^S-p_+)+\sigma^2(v^S-v_+)
=(p^S-p^*)+\sigma^2(v^S-v^*)=0,\\
\begin{aligned}
-\kappa\frac{(\theta^S)'}{\sigma v^S}
&=\frac{R}{\gamma-1}(\theta^S-\theta_+)
+p_+(v^S-v_+)-\frac12\sigma^2(v^S-v_+)^2\\
&=\frac{R}{\gamma-1}(\theta^S-\theta^*)
+p^*(v^S-v^*)-\frac12\sigma^2(v^S-v^*)^2,
\end{aligned}
\end{cases}
\end{equation}
which means the existence of a shock profile  $(v^S, u^S, \theta^S)(\xi)$ is equivalent to the existence of solution to ODEs \eqref{VS2}. The properties of the 3-viscous shock wave $(v^S, u^S, \theta^S)$ are summarized below. And we mainly establish the explicit estimates \eqref{shock-vu}-\eqref{sharp-D}, while the remaining estimates can be found in \cite{FM}.

\begin{lemma} \label{lemma1.3}
	For any state  $(v^*,u^*,\theta^*)$, there exists a constant $C>0$ such that the following holds. For any end state such that $(v_+, u_+,\theta_+)\in S_3(v^*,u^*,\theta^*)$, there exists a unique (up to a constant shift) solution  $(v^S, u^S, \theta^S)(\xi)$ to \eqref{VS}. Let $\delta_S$ denote the strength of the shock as $\delta_S:=|v_+-v^*|\sim|u_+-u^*|\sim |\theta_+-\theta^*|$. Then, the following estimates hold 
	\begin{align}
		\begin{aligned}\label{shock-base}
			& u^S_\xi<0, \qquad v^S_\xi >0, \qquad \theta^S_\xi<0,\quad  \forall\xi\in\bbr,\\
			& | (v^S(\xi) -v^*, u^S(\xi) -u^*, \theta^S(\xi) -\theta^*)|\leq C\deltas\ e^{-C\delta_S |\xi|}, \quad \xi<0,\\[1mm]
			& | (v^S(\xi) -v_+, u^S(\xi) -u_+, \theta^S(\xi) -\theta_+)|\leq C\deltas\ e^{-C\delta_S |\xi|}, \quad \xi>0,\\[1mm]
			&|( v^S_\xi, u^S_\xi,\theta^S_\x)|\leq C\deltas^2\ e^{-C\delta_S |\xi|}, \quad\forall\xi\in\bbr,\\[1mm]
			&|( v^S_{\xi\xi}, u^S_{\xi\xi},\theta^S_{\x\x})|\leq C\delta_S |( v^S_\xi, u^S_\xi, \theta_\x^S)|, \quad \forall\xi\in\bbr,\\[1mm]
			&|( v^S_{\xi\xi\xi}, u^S_{\xi\xi\xi},\theta^S_{\x\x\x})|\leq C\delta_S |( v^S_\xi, u^S_\xi, \theta_\x^S)|, \quad \forall\xi\in\bbr.
		\end{aligned}
	\end{align}		
	In particular,  $|v^S_\xi|\sim  |u^S_\xi | \sim |\theta^S_\xi |$ for all $\xi\in\bbr$, more explicitly, we have
	\beq\label{shock-vu}
	\Big| (u^S)_\xi + \sigma^* (v^S)_\xi \Big| \le C \deltas|(v^S)_\xi|, \quad\forall \xi\in\bbr,
	\eeq
    and
	\beq\label{theta-s}
	\Big| (\theta^S)_\xi + \frac{(\gamma-1)p^*}{R}(v^S)_\xi \Big| \le C \deltas|(v^S)_\xi|,\quad\forall \xi\in\bbr.
	\eeq
    By employing the structure of the equations and the temperature dissipation, we have the following estimate
	\beq\label{sharp-D}
	\left| \frac{\kappa}{(v^S-v^*)(v_+-v^S)} \frac{v^S_{\xi}}{v^S}  - \alpha^*\frac{ R\gamma}{ (\gamma-1)^2} \right|\leq C\deltas,
	\eeq
	where 
	\beq\label{pstar}
	\begin{array}{ll}
	p^*:=p(v^*, \theta^*)=\frac{R\theta^*}{v^*},\quad \sigma^*:=\sqrt\frac{\gamma p^*}{v^*} = \frac{\sqrt{\gamma R\theta^*}}{v^*},\\
	p_+:=p(v_+, \theta_+)=\frac{R\theta_+}{v_+},\quad
	\sigma:=\sqrt{-\frac{p(v_+,\theta_+)-p(v^*,\theta^*)}{v_+-v^*}},
	\end{array}
	\eeq
	and 
	$$\alpha^*:= \frac{\gamma(\gamma+1) p^*}{2(v^*)^2\s^*},$$
	which satisfies
	\beq\label{sm1}
	|\sigma-\sigma^*|\le C\delta_S.
	\eeq
\end{lemma}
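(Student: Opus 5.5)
The estimates \eqref{shock-base}, together with existence and uniqueness of the profile, are taken from \cite{FM}. The new statements \eqref{sm1}, \eqref{shock-vu}, \eqref{theta-s}, \eqref{sharp-D} will all be read off from the integrated system \eqref{VS2}. We use only two facts: $|v^S-v^*|\le C\delta_S$ along the profile, and $v^S_\xi>0$. The plan is first to derive exact algebraic identities from \eqref{VS2}, and then to Taylor-expand the coefficients about the state $(v^*,\theta^*)$.

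\emph{Step 1: \eqref{sm1} and \eqref{shock-vu}.} By definition, $\sigma^2=-\frac{p(v_+,\theta_+)-p(v^*,\theta^*)}{v_+-v^*}$. The 3-shock curve is tangent at $(v^*,\theta^*)$ to the 3-rarefaction curve, on which $s$ is constant up to second order. Hence $\sigma^2=-\partial_v p(v,s^*)|_{v^*}+O(\delta_S)=\gamma p^*/v^*+O(\delta_S)$, and taking square roots gives \eqref{sm1}. The first equation of \eqref{VS} gives exactly $u^S_\xi=-\sigma v^S_\xi$. Therefore $|u^S_\xi+\sigma^* v^S_\xi|=|\sigma-\sigma^*||v^S_\xi|\le C\delta_S|v^S_\xi|$.

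\emph{Step 2: \eqref{theta-s}.} Differentiate the first line of \eqref{VS2} and use $p=R\theta/v$. This gives $\theta^S_\xi=-D(v^S,\theta^S)\,v^S_\xi$, where
\[
D(v,\theta):=\frac{\sigma^2 v}{R}-\frac{\theta}{v}.
\]
Using $\sigma^{*2}=\gamma R\theta^*/(v^*)^2$, we get $D(v^*,\theta^*)$ with $\sigma$ replaced by $\sigma^*$ equal to $(\gamma-1)\theta^*/v^*=(\gamma-1)p^*/R$. The function $D$ is Lipschitz, $|(v^S,\theta^S)-(v^*,\theta^*)|\le C\delta_S$, and \eqref{sm1} holds. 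Together these give $|D(v^S,\theta^S)-(\gamma-1)p^*/R|\le C\delta_S$, which is \eqref{theta-s}. In particular $D>0$ for $\delta_S$ small.

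\emph{Step 3: \eqref{sharp-D}, the key computation.} In the second line of \eqref{VS2} (the form based at $*$), eliminate $\theta^S-\theta^*$ using the first line. From $R\theta^S/v^S=p^*-\sigma^2(v^S-v^*)$ we get
\[
\theta^S-\theta^*=\frac{(v^S-v^*)\bigl(p^*-\sigma^2 v^S\bigr)}{R}.
\]
Write $v^S=v^*+(v^S-v^*)$. The right-hand side of the second line then becomes
\[
(v^S-v^*)\Bigl[\tfrac{\gamma p^*-\sigma^2v^*}{\gamma-1}-\tfrac{(\gamma+1)\sigma^2}{2(\gamma-1)}(v^S-v^*)\Bigr].
\]
This expression must vanish at $v^S=v_+$, which forces $\frac{\gamma p^*-\sigma^2v^*}{\gamma-1}=\frac{(\gamma+1)\sigma^2}{2(\gamma-1)}(v_+-v^*)$. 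This yields the exact identity
\[
-\kappa\frac{\theta^S_\xi}{\sigma v^S}=\frac{(\gamma+1)\sigma^2}{2(\gamma-1)}(v^S-v^*)(v_+-v^S).
\]
Combining it with Step 2 gives the exact formula
\[
\frac{\kappa}{(v^S-v^*)(v_+-v^S)}\frac{v^S_\xi}{v^S}=\frac{(\gamma+1)\sigma^3}{2(\gamma-1)D(v^S,\theta^S)}.
\]
Replace $\sigma$ by $\sigma^*$ and $D$ by $(\gamma-1)p^*/R$, at cost $O(\delta_S)$ by \eqref{sm1} and Step 2. The limit constant is $\frac{(\gamma+1)R\sigma^{*3}}{2(\gamma-1)^2p^*}$. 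Since $\sigma^{*4}=\gamma^2(p^*)^2/(v^*)^2$, this equals $\frac{\gamma^2(\gamma+1)p^*R}{2(\gamma-1)^2(v^*)^2\sigma^*}=\alpha^*\frac{R\gamma}{(\gamma-1)^2}$, which proves \eqref{sharp-D}.

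The main obstacle is Step 3: finding the factorization of the energy relation as an exact multiple of $(v^S-v^*)(v_+-v^S)$. Without it, a naive expansion would only give an $O(\delta_S^2)$ error relative to a quantity of size $\delta_S^2$, which is useless. The rest is Lipschitz bookkeeping.
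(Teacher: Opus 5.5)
Your proposal is correct and follows the paper's proof essentially step for step: the same relation $\theta^S_\xi=\frac{p^S-\sigma^2v^S}{R}v^S_\xi$ for \eqref{theta-s}, the same elimination $\theta^S-\theta^*=(v^S-v^*)(p^*-\sigma^2v^S)/R$, and the same exact factorization $\frac{(\gamma+1)\sigma^2}{2(\gamma-1)}(v^S-v^*)(v_+-v^S)$ for \eqref{sharp-D}. The differences are cosmetic. You obtain the identity \eqref{beteq} by forcing the bracket to vanish at $v_+$, whereas the paper combines the two far-field relations. You also justify \eqref{sm1} via tangency of the shock and rarefaction curves, which the paper takes as known; it in fact follows directly from your identity as $\sigma^2=2\gamma p^*/(2v^*+(\gamma+1)\delta_S)$.
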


\begin{proof}
	We only prove the estimates \eqref{shock-vu}-\eqref{sharp-D} here, and the remaining estimates come from \cite{FM}. The estimate \eqref{shock-vu} is easily shown by \eqref{sm1} and the equation 
	$( u^S)_\xi=-\sigma (v^S)_\xi$.
	
	To prove \eqref{theta-s}, we use the following fraction from  \eqref{VS2} that
	\begin{equation*}
		\theta^S_\xi=\frac{p^S-\s ^2 v^S}{R}v^S_\xi,
	\end{equation*}
	so we only need to prove 
	\begin{equation}\label{ps}
		|p^S-\s ^2 v^S+(\gamma-1)p^*| \leq C \delta_S.
	\end{equation}
	Using \eqref{sm1} and $p^*-(\s^*)^2v^*=(1-\gamma)p^*$, we can obtain \eqref{theta-s}. 
	
	Next, we prove the estimate \eqref{sharp-D}. To do so, we first establish a useful equality
	\begin{equation}\label{beteq}
		\frac{1}{2}\sigma^2 v^*+\frac{\gamma}{\gamma-1} p^*=\frac{\gamma+1}{2(\gamma-1)}\s^2 v_+ .
	\end{equation}
	From $\eqref{VS2}$, let $\xi \to +\infty$ and $\xi \to -\infty$, it holds that
	\begin{align*}
			&\frac{R}{\gamma-1}(\theta_+-\theta^*)=p_+(v^* -v_+)-\frac12\sigma^2(v^*-v_+)^2,\\
			&\frac{R}{\gamma-1}(\theta_+-\theta^*)=-p^*(v_+ -v^*)+\frac12\sigma^2(v_+-v^*)^2,
	\end{align*}
	which implies 
	\begin{equation*}
		\frac{2R}{\gamma-1}(\theta_+-\theta^*)-(p_++p^*)(v^* -v_+)=0.
	\end{equation*}
	Using the fact that $p_+v_+= R\theta_+, ~p^*v^*= R\theta^*$ and $\s^2=\frac{p^*-p_+}{v_+-v^*}$, we get
	\begin{align*}
			0=&\frac{2R}{\gamma-1}(\theta_+-\theta^*)-(p_++p^*)(v^* -v_+)\\
			=&\frac{1}{\gamma-1}\left[(\gamma+1)p^*(v_+-v^*)+(\gamma-1)p_+(v_+-v^*)-2(p^*-p_+)v_+\right]\\
			=&\frac{1}{\gamma-1}(v_+-v^*)\left[(\gamma+1)p^*+(\gamma-1)p_+-2\s^2 v_+\right]\\
			=&\frac{1}{\gamma-1}(v_+-v^*)\left[2\gamma p^*+(\gamma-1)(p_+-p^*)-2\s^2 v_+\right]\\
			=&\frac{1}{\gamma-1}(v_+-v^*)\left[2\gamma p^*+(\gamma-1)\s^2(v^*-v_+)-2\s^2 v_+\right]\\
			=&2(v_+-v^*)\left[\frac{\gamma}{\gamma-1} p^*+\frac{1}{2}\s^2 v^*-\frac{\gamma+1}{2(\gamma-1)}\s^2 v_+\right].
	\end{align*}
	Thus, we get the equality \eqref{beteq}. From $\eqref{VS2}$, we have
$$
	0=-\s (v^S -v^*) - \frac{p^S-p^*}{\s} =-\frac{1}{\s}\left[ \s^2(v^S -v^*) + p^S-p^*\right],
$$
and then using the fact \eqref{pstar} and $p^S=\frac{R\theta^S}{v^S},$
it holds that 
\begin{equation}\label{v-theta}
	\theta^S-\theta^*=\frac{v^S}{R}\left(\frac{R\theta^*}{v^Sv^*}-\s^2\right) (v^S -v^*) .
\end{equation}
Along with $\eqref{VS2}$, $\eqref{beteq}$ and $\eqref{v-theta}$, we get
\begin{align*}
		-\frac{\kappa}{\sigma} \frac{\theta^S_\xi }{v^S} 
		&=\frac{R}{\gamma-1}(\theta^S-\theta^*)+p^*(v^S-v^*)-\frac{1}{2}\sigma^2(v^S-v^*)^2\\
		&=(v^S-v^*)\left(\frac{1}{2}\sigma^2 v^*+\frac{\gamma}{\gamma-1} p^*-\frac{\gamma+1}{2(\gamma-1)}\s^2 v^S\right)\\
		&=\frac{\gamma+1}{2(\gamma-1)}\s^2 (v^S-v^*)(v_+-v^S),
\end{align*}
which implies
\[
\frac{\kappa}{(v^S-v^*)(v_+-v^S)} \frac{\theta^S_\xi}{v^S} =-\frac{\gamma+1}{2(\gamma-1)}\sigma^3.
\]
Differentiating both sides with respect to $\x$ in $\eqref{v-theta}$, we obtain $$\theta^S_{\x}=\frac{1}{R}(p^S-\s^2 v^S)v^S_{\x}.$$
Consequently,
\[
\frac{\kappa}{(v^S-v^*)(v_+-v^S)} \frac{v^S_\xi}{v^S} =-\frac{R}{p^S-\s^2 v^S}\frac{\gamma+1}{2(\gamma-1)}\sigma^3.
\]
By means of equations \eqref{sm1} and \eqref{ps}, together with some direct calculations, we have
$$\left|\frac{\gamma+1}{2(\gamma-1)}\frac{R\s^3}{p^S-\s^2v^S}+\alpha^*\frac{R\gamma}{(\gamma-1)^2}\right|\leq C \delta_S,$$
so that we complete the proof.
\end{proof}

\begin{remark}
 In the proof of \eqref{sharp-D}, we exploit the structure of the equations \eqref{VS2} to convert the temperature-dependent viscosity into a density-dependent form, which facilitates its use in the subsequent relative entropy method to handle the difficulties arising from the associated nonlinear terms.
\end{remark}

\subsection{Viscous contact wave}

It is established that the inviscid contact discontinuity exhibits time-asymptotic instability for the compressible Euler equations \eqref{E}. In contrast, the viscous contact wave as the viscous version of the inviscid contact discontinuity, has been successfully constructed and shown to be time-asymptotically stable within both the ``artificial viscosity'' system \cite{Xin-c, Liu-Xin97} and the physical Navier-Stokes system \cite{HXY, HLM}. We recall the following lemma from \cite{HXY}.

\begin{lemma}\label{Lemma 2.2.}
The viscous contact wave  $\big(\vc, u^{C}, \theta^{C}\big)(t,x)$ defined in \eqref{vc} satisfies
	\beq\label{vc-pe}
	\begin{array}{ll}
		\di\big(v^C-v_*, u^C-u_*,\theta^{C}-\theta_* \big)= O(1)\delta_{_C}e^{ -\frac{\tilde C x^2}{1+t}}, \qquad \forall x<0;\\
		\di\big(v^C-v^*, u^C-u^*,\theta^{C}-\theta^* \big)= O(1)\delta_{_C}e^{ -\frac{\tilde C x^2}{1+t}}, \qquad \forall x>0;\\
		\di (\partial_x^nv^{C},\partial_x^n\theta^{C})(t,x)
		=O(1)\delta_{_C}(1+t)^{-\frac{n}{2}}e^{ -\frac{\tilde C x^2}{1+t}}, \qquad\forall x\in \mathbb{R}, \quad n=1,2,\cdots;\\
		\di \partial_x^n u^{C}(t, x)
		=O(1)\delta_{_C}(1+t)^{-\frac{1+n}{2}}e^{ -\frac{\tilde C x^2}{1+t}},\qquad\qquad\ \   \forall x\in \mathbb{R},\quad n=1,2,\cdots;\\
	\end{array}
	\eeq
	where $\delta_{_C}:=|v^*-v_*|\sim|\theta^{*}-\theta_{*}|$ is the amplitude of the viscous contact wave, and $\tilde C>0 $ is generic constant. 
\end{lemma}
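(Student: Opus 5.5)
The plan is to work directly from the self-similar structure of the viscous contact wave \eqref{vc}. Write $\eta=x/\sqrt{1+t}$ and look for $\Theta(t,x)=\Theta(\eta)$. Substituting into the nonlinear diffusion equation gives the ODE
\[
-\tfrac{\eta}{2}\Theta'(\eta)=a\Big(\frac{\Theta'}{\Theta}\Big)'(\eta),\qquad a:=\frac{(\gamma-1)\kappa p_*}{R^2\gamma},
\]
with boundary values $\Theta(-\infty)=\theta_*$ and $\Theta(+\infty)=\theta^*$. The construction in \cite{HXY}, a fixed-point or shooting argument near the constant state, gives existence, uniqueness and monotonicity of $\Theta$ for $\delta_C$ small.

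The central step is to show that $\Theta'$ is a Gaussian with amplitude $O(\delta_C)$. Set $g:=\Theta'/\Theta$, so that $\Theta'=\Theta g$. The ODE becomes
\[
a g'=-\tfrac{\eta}{2}\Theta g .
\]
Integrating,
\[
g(\eta)=g(0)\exp\Big(-\frac{1}{2a}\int_0^\eta s\,\Theta(s)\,ds\Big).
\]
Since $\Theta$ stays between $\theta_*$ and $\theta^*$, both positive and of order one, we get $|g(\eta)|\le |g(0)|e^{-\tilde C\eta^2}$. The total variation constraint $\int_{\mathbb R}\Theta'\,d\eta=\theta^*-\theta_*$ then forces $|g(0)|=O(\delta_C)$. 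Hence
\[
|\Theta'(\eta)|\le C\delta_C e^{-\tilde C\eta^2}.
\]

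From this Gaussian bound the remaining estimates in \eqref{vc-pe} follow.
\begin{enumerate}
\item Integrating from $\mp\infty$ gives $|\Theta-\theta_*|\le C\delta_C e^{-\tilde C\eta^2}$ for $\eta<0$, and similarly $|\Theta-\theta^*|$ for $\eta>0$, using $\int_\eta^\infty e^{-\tilde C s^2}ds\le Ce^{-\tilde C\eta^2/2}$ and shrinking $\tilde C$.
\item Differentiating the ODE repeatedly, by induction $\Theta^{(n)}=\delta_C\,P_n(\eta)e^{-\tilde C\eta^2}$ with $P_n$ of polynomial growth. Absorbing the polynomial into a slightly smaller Gaussian exponent gives $|\Theta^{(n)}(\eta)|\le C\delta_C e^{-\tilde C\eta^2}$.
\item The chain rule gives $\partial_x^n\Theta=(1+t)^{-n/2}\Theta^{(n)}(\eta)$, which is the stated bound for $\theta^C$.
\item Since $v^C=R\Theta/p_*$ is linear in $\Theta$, the bounds for $v^C$ follow at once.
\end{enumerate}

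For the velocity, write
\[
u^C-u_*=\frac{(\gamma-1)\kappa}{R\gamma}\,\frac{\Theta_x}{\Theta}=\frac{(\gamma-1)\kappa}{R\gamma}(1+t)^{-1/2}g(\eta).
\]
This already has the extra factor $(1+t)^{-1/2}$ and is bounded by $C\delta_C e^{-\tilde C x^2/(1+t)}$ on both half-lines. For $x>0$ we also use $u^*=u_*$, which holds because the states lie on $CD_2$. Each $x$-derivative of $g(\eta)$ contributes a further $(1+t)^{-1/2}$, giving $\partial_x^n u^C=O(\delta_C)(1+t)^{-(1+n)/2}e^{-\tilde C x^2/(1+t)}$.

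The main obstacle is the first step: obtaining the Gaussian decay with an amplitude that is exactly $O(\delta_C)$ while $\Theta$ is itself unknown. The integrating-factor identity for $g$ handles this cleanly, provided the a priori bounds $\theta_*\le\Theta\le\theta^*$ (or the reverse order) come from monotonicity. That monotonicity follows because $g$ never changes sign, by the same formula.
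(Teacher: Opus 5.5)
Your proof is correct, and it differs from the paper's treatment: the paper gives no proof of this lemma and imports it from \cite{HXY}, which itself relies on classical results on self-similar solutions of the nonlinear diffusion equation.

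You derive everything from the first integral $a g'=-\frac{\eta}{2}\Theta g$ for $g=\Theta'/\Theta$. This identity gives, in order:
\begin{enumerate}
\item constant sign of $g$, hence monotonicity and $\min\{\theta_*,\theta^*\}\le\Theta\le\max\{\theta_*,\theta^*\}$;
\item a Gaussian envelope for $g$;
\item the linear amplitude $|g(0)|\le C\delta_C$, using the upper bound on $\Theta$ to bound $\int e^{-\frac{1}{2a}\int_0^\eta s\Theta\,ds}\,d\eta$ from below by $\sqrt{4\pi a/\theta_{\max}}$.
\end{enumerate}
Existence, uniqueness and positivity of $\Theta$ are already presupposed by the definition \eqref{vc}, so taking them from the literature costs nothing. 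The gain over the citation is transparency. The constant $\tilde C$ is explicit in terms of $a$ and $\theta_{\min}$. The extra factor $(1+t)^{-1/2}$ for $u^C$ is visibly one application of $\partial_x=(1+t)^{-1/2}\partial_\eta$.

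Two small points need tidying.
\begin{itemize}
\item $\Theta^{(n)}$ is not literally $\delta_C P_n(\eta)e^{-\tilde C\eta^2}$, because $g$ is not an exact Gaussian. What the induction actually gives is $|g^{(k)}(\eta)|\le C_k(1+|\eta|)^k|g(\eta)|$, using boundedness of all $\Theta^{(j)}$. From this, $|\Theta^{(n)}|=|(\Theta g)^{(n-1)}|\le C\delta_C e^{-\tilde C\eta^2}$ after shrinking $\tilde C$. You need this bound on $g^{(n)}$ explicitly for $\partial_x^n u^C=\frac{(\gamma-1)\kappa}{R\gamma}(1+t)^{-\frac{1+n}{2}}g^{(n)}(\eta)$.
\item The bound on $v^C-v^*$ for $x>0$, and the equivalence $\delta_C\sim|\theta^*-\theta_*|$, need $v^*=R\theta^*/p_*$, that is, $p^*=p_*$. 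Like $u^*=u_*$, this comes from $(v^*,u^*,\theta^*)\in CD_2(v_*,u_*,\theta_*)$, not merely from $v^C$ being linear in $\Theta$.
\end{itemize}
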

Then the equations of the viscous contact wave  $\big(\vc, u^{C},
\theta^{C}\big)(t,x)$ defined in \eqref{vc} can be written as follows:
\begin{equation}\label{vc-equ}
	\begin{cases}
		v^{C}_t-u^{C}_{x} = 0, \cr
		u^{C}_t+ p^{C}_x
		=Q^C_1, \cr
		\frac{R}{\gamma-1}\theta^{C}_t+  p^{C} u^{C}_x
		=\kappa\big(\frac{\theta^{C}_{x}}{v^{C}} \big)_x,
	\end{cases}
\end{equation}
where $ p^{C}=p\big( v^{C}, \theta^{C}\big)$ and
\beq\label{QC1}
\begin{aligned}
	Q^C_1=u^C_t=O(1)\delta_{_C}(1+t)^{-\frac32}e^{-\frac{\tilde C x^2}{1+t}},
\end{aligned}
\eeq
as $x\rightarrow\pm \infty$ due to Lemma \ref{Lemma 2.2.}. Moreover, from \eqref{vc} and Lemma \ref{Lemma 2.2.}, it holds that for $\forall p\geq1,$
\beq
\|\big(\vc, u^{C},
\theta^{C}\big)(t,\cdot)-\big(v^c, u^{c},
\theta^{c}\big)(t,\cdot)\|_{L^p(\bbr)}=O(1)\kappa^{\frac{1}{2p}}(1+t)^{\frac{1}{2p}},
\eeq
which implies that viscous contact wave $(\vc, u^{C},
\theta^{C})(t,x)$ can converge to the inviscid contact discontinuity $(v^c, u^{c},
\theta^{c})(t,x)$ in $L^p$-norm $(\forall p\geq1)$ on any finite time interval as the heat conductivity coefficient $\kappa\rightarrow0+$, however, they could be far away at large time.

\section{Proof of main result}\label{sec-thm}
\setcounter{equation}{0}

\subsection{Reformulation of problem and local existence} 
 For simplicity, to align with the shock wave, we first carry out a coordinate transformation $(t,x)\mapsto (t, \xi=x-\s t)$. Thus, we can rewrite the system \eqref{NS-0} into the following system: 
\begin{equation}\label{NS-1}
\begin{cases}
v_t-\sigma v_\x-u_\x=0,\\
u_t-\sigma u_\x+p(v,\theta)_\x=0,\\
\frac{R}{\gamma-1}\theta_t-\frac{R\sigma}{\gamma-1}\theta_\x
+p(v,\theta)u_\x
=\left(\kappa\frac{\theta_\x}{v}\right)_\x .
\end{cases}
\end{equation}
Similarly, it follows from \eqref{ARW} that the approximate rarefaction wave $(v^R, u^R, \theta^R)(t,\xi+\sigma t) $ satisfies 
\begin{equation}  \label{rarexi}
	\begin{cases}
		\displaystyle v^R_t -\s  v^R_\xi-u^R_\xi = 0, \\
		\displaystyle u^R_t -\s  u^R_\xi+ p(v^R, \theta^R)_\xi = 0,\\
		\di \frac{R}{\gamma-1}\theta^R_t-\frac{R\s}{\gamma-1}\theta^R_\x+p(v^R, \theta^R) u^R_\x=0, \\
	\end{cases}
\end{equation}
and from \eqref{vc-equ} that
the viscous contact wave  $\big(\vc, u^{C},
\theta^{C}\big)(t,\xi+\sigma t)$ satisfies 
\begin{equation}\label{vcex}
	\begin{cases}
		v^C_t-\s v^C_\x-u^C_{\x} = 0, \cr
		u^C_t-\s u^C_\x+ p^C_\x
		=Q^C_1, \cr
		\frac{R}{\gamma-1}\theta^C_t-\frac{R\s}{\gamma-1}\theta^C_\x+  p^C u^C_\x
		=\kappa\big(\frac{\theta^C_{\x}}{v^C} \big)_\x,
	\end{cases}
\end{equation}
with the error terms $Q^C_1$ defined in \eqref{QC1}.
We will consider stability of the solution to \eqref{NS-1} around the superposition wave of the approximate 1-rarefaction wave, the 2-viscous contact wave and  the 3-viscous shock wave shifted by $\mb X(t)$ (to be defined in \eqref{X(t)}) : 
\beq\label{shwave}
\begin{array}{l}
	\di (\bar v, \bar u,\bar \theta) (t,\xi):= \Big(v^R(t,\xi+\sigma t)+v^C(\xi+\sigma t)+ v^S(\xi -\mb X(t))-v_*-v^*,\\[4mm]
	\di \qquad\qquad\qquad\qquad u^R(t,\xi+\sigma t)+u^C(t,\xi+\sigma t)+u^S(\xi-\mb X(t))-u_*-u^*,\\[4mm]
	\di \qquad\qquad\qquad\qquad  \theta^R(t,\x+\s t)+ \theta^C(\xi+\sigma t)+ \theta^S(\xi-\mb X(t))-\theta_*-\theta^* \Big).
\end{array}
\eeq
Then we can get the equations of the superposition wave $(\bar v, \bar u,\bar \theta) (t,\xi)$:
\begin{equation}\label{bar-system}
\begin{cases}
\bar v_t-\sigma \bar v_\xi+\dot{\mb X}(t)(v^S)^{-\mb X}_\xi-\bar u_\xi=0,\\[3mm]
\bar u_t-\sigma \bar u_\xi+\dot{\mb X}(t)(u^S)^{-\mb X}_\xi+\bar p_\xi=Q_1,\\[3mm]
\frac{R}{\gamma-1}\bar\theta_t-\frac{R\sigma}{\gamma-1}\bar\theta_\xi
+\frac{R}{\gamma-1}\dot{\mb X}(t)(\theta^S)^{-\mb X}_\xi
+\bar p\bar u_\xi
=\kappa\left(\frac{\bar\theta_\xi}{\bar v}\right)_\xi+Q_2,
\end{cases}
\end{equation}
where $\bar p=\frac{R\bar\theta}{\bar v}$ and the error terms
\beq\label{Q12}
\begin{aligned}
	&Q_1:=Q^I_1+Q^C_1, \\
	&Q_2:=Q^I_2+Q^R_2,
\end{aligned}
\eeq
with the wave interactions terms
\beq\label{QI1}
Q_1^I:= \left(\bar p-p^R-p^C-(p^S)^{-\mb X}\right)_\xi,
\eeq
\beq\label{QI2}
\begin{array}{ll}
	\di 
	Q_2^I:= \left(\bar p\bar u_\x-p^Ru^R_\xi-p^Cu^C_\xi-(p^S)^{-\mb X}(u^S)^{-\mb X}_\x\right)-\kappa\left(\frac{\bar \theta_\x}{\bar v}-\frac{\theta^R_\x}{v^R}-\frac{\theta^C_\x}{v^C}-\frac{(\theta^S)^{-\mb X}_\x}{(v^S)^{-\mb X}}\right)_\x,
\end{array}
\eeq
the error terms due to the inviscid rarefaction wave
\beq\label{QR}
Q^R_2:=-\kappa\left(\frac{\theta^R_\x}{v^R}\right)_\x ,
\eeq
and the error terms $Q_1^C$ due to the viscous contact wave are given in \eqref{QC1}.

For any given initial perturbation in $H^2$ of the composite wave profile \eqref{shwave}, we establish the existence of a global strong solution to the system \eqref{NS-1}. We will employ the standard continuation argument to derive uniform-in-time estimates for the perturbation. As a preliminary step, we recall the local-in-time well-posedness theory for strong solutions to \eqref{NS} (and also for \eqref{NS-1}); see \cite{Nash} for the general framework. More precisely, the local existence of strong solutions to \eqref{NS}, and hence to \eqref{NS-1}, is ensured as follows.
\begin{proposition} \label{prop:soln}
	Let $\underline v$, $\underline u$ and $\underline{\theta}$ be smooth functions such that
	\beq\label{sm}
	(\underline v(x) , \underline u(x), \underline \theta(x)) =(v_\pm, u_\pm, \theta_\pm), \quad\mbox{for } \pm x \ge 1.
	\eeq
	For any constants $M_0, M_1,  \underline \kappa_0,  \overline \kappa_0, \underline\kappa_1, \overline\kappa_1$ with $M_1>M_0>0$ and $ \overline \kappa_1>\overline \kappa_0>  \underline \kappa_0>\underline\kappa_1>0$, there exists a constant $T_0>0$ such that if 
	\begin{align*}
		\begin{aligned}
			&\|(v_0-\underline v, u_0 -\underline u, \theta_0-\underline \theta)\|_{H^2(\bbr)}  \le M_0,\\
			&0< \underline \kappa_0 \leq v_0(x), \theta_0(x)\leq  \overline \kappa_0, \qquad  \forall x\in \bbr, \\
		\end{aligned}
	\end{align*}
	then \eqref{NS-1} has a unique solution $(v, u, \theta)$ on $[0,T_0]$ such that 
	\begin{align*}
		\begin{aligned}
			&v -\underline v, u -\underline u\in C([0,T_0];H^2(\bbr)), \\
			&\theta-\underline{\theta}\in C([0,T_0];H^2(\bbr)) \cap  L^2(0,T_0;H^3(\bbr)),
		\end{aligned}
	\end{align*}
	and
	\[
	\|(v-\underline v, u-\underline u, \theta-\underline\theta)\|_{L^\infty(0,T_0;H^2(\bbr))} \le M_1.
	\]
	Moreover:
	\beq\label{bddab}
	\underline  \kappa_1 \le v(t,x),~\theta(t,x) \le \overline  \kappa_1,\qquad \forall (t,x)\in [0,T_0]\times \bbr.
	\eeq
\end{proposition}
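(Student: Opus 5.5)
Proposition \ref{prop:soln} is a standard local well-posedness result for a hyperbolic-parabolic system. I would prove it by a Picard-type iteration that decouples the hyperbolic part $(v,u)$ from the parabolic part $\theta$, with uniform $H^2$ bounds on a short time interval and contraction in a lower norm.

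\textbf{Reduction and iteration scheme.} Since $(\underline v,\underline u,\underline\theta)$ is smooth and constant outside $[-1,1]$, I write $(v,u,\theta)=(\underline v,\underline u,\underline\theta)+(\phi,\psi,\vartheta)$. The perturbation then lies in $H^2$ with forcing terms that are smooth and compactly supported in $\xi$ (after the Galilean change $\xi=x-\sigma t$, which is harmless). Given an iterate $(v^n,u^n,\theta^n)$ in the class
\[
X_T=\{\,\|(v-\underline v,u-\underline u,\theta-\underline\theta)\|_{L^\infty(0,T;H^2)}\le M_1,\ \underline\kappa_1\le v,\theta\le\overline\kappa_1\,\},
\]
I define the next iterate in two steps. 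First, $\theta^{n+1}$ solves the linear uniformly parabolic equation
\[
\tfrac{R}{\gamma-1}\theta_t-\tfrac{R\sigma}{\gamma-1}\theta_\xi-\kappa\Big(\tfrac{\theta_\xi}{v^n}\Big)_\xi=-p(v^n,\theta^n)u^n_\xi .
\]
The coefficient $1/v^n\in H^2$ is bounded above and below. Standard parabolic theory gives $\theta^{n+1}-\underline\theta\in C([0,T];H^2)\cap L^2(0,T;H^3)$, with bounds depending only on $M_1$ and $\overline\kappa_1,\underline\kappa_1$, plus $\sqrt T$-small contributions from the data. Second, $(v^{n+1},u^{n+1})$ solves the linear symmetrizable hyperbolic system
\[
v_t-\sigma v_\xi-u_\xi=0,\qquad u_t-\sigma u_\xi+p_v(v^n,\theta^{n})v_\xi=-p_\theta(v^n,\theta^{n})\theta^{n+1}_\xi .
\]
Its symmetrizer is $\mathrm{diag}(-p_v,1)$, which is positive since $p_v<0$. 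Friedrichs' $H^2$ energy estimates, applied to $\partial_\xi^k$ with $k\le2$ and with commutators handled by Moser/Sobolev inequalities in one dimension, close. The crucial point is that the source $p_\theta\theta^{n+1}_\xi$ needs $\theta^{n+1}_{\xi\xi\xi}\in L^2_tL^2_\xi$, and the parabolic step provides exactly that.

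\textbf{Uniform bounds and contraction.} Integrating the estimates in time gives
\[
\|(\phi,\psi,\vartheta)^{n+1}\|_{L^\infty_TH^2}^2\le C(M_0^2+1\cdot T)\,e^{C(M_1)T},
\]
up to constants. Choosing $T_0$ small, this is at most $M_1^2$, since $M_1>M_0$. The pointwise bounds \eqref{bddab} follow from $\|f(t)-f(0)\|_{L^\infty}\le C\|f(t)-f(0)\|_{H^1}\le CT\sup\|f_t\|_{H^1}$, together with $\overline\kappa_1>\overline\kappa_0$ and $\underline\kappa_1<\underline\kappa_0$. Next, differences of consecutive iterates satisfy the same linear systems with sources bounded by the previous difference. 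An $L^2$-level estimate (using $L^2_tH^1$ for the temperature difference) then gives a contraction for $T_0$ small. The limit lies in $X_{T_0}$ by weak-$*$ lower semicontinuity. Continuity in time with values in $H^2$ (not just weak continuity) follows from the standard Bona–Smith or energy-equality argument for the hyperbolic part and from parabolic regularity for $\theta$. Uniqueness follows from the same difference estimate.

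\textbf{Main obstacle.} The delicate point is the loss of one derivative in the coupling. The hyperbolic $H^2$ estimate requires $\theta_\xi\in L^2_tH^2$, that is, $\theta\in L^2H^3$. Conversely, the parabolic equation has source $u_\xi\in H^1$, which yields exactly $H^3$ in $L^2_t$ but no better. So the scheme must be ordered as above, with $\theta^{n+1}$ computed before $(v^{n+1},u^{n+1})$ and fed directly into it, so that no derivative is lost. The contraction must also be done in a norm one level lower than the bounded norm. I would check carefully that the constants in the parabolic $L^2H^3$ bound depend only on $M_1$ and not on $T$.
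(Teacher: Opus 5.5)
The paper gives no argument for this proposition beyond a pointer to Nash's framework, so your scheme is already more detailed than the text, and most of it is sound. Computing $\theta^{n+1}$ first and feeding $\theta^{n+1}_\xi\in L^2(0,T;H^2)$ into the symmetrized $(v,u)$ system is the right way to avoid derivative loss. The contraction in $L^\infty L^2$, with $L^2H^1$ for the temperature, also works. There are two routine omissions. First, the iteration class must also carry a bound on $\|\theta-\underline\theta\|_{L^2(0,T;H^3)}$, because $\partial_t$ of the symmetrizer weight $-p_v(v^n,\theta^n)$ contains $\theta^n_{\xi\xi}$ and is only in $L^2(0,T;L^\infty)$. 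Second, for $\theta$ the pointwise bound cannot use $\theta_t\in L^\infty H^1$, which is not available; it follows instead from $\theta_t\in L^\infty(0,T;L^2)$ and interpolation.

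\textbf{The genuine gap is the last step of the uniform bound.} From $C(M_0^2+T)e^{C(M_1)T}$ you conclude ``$\le M_1^2$ since $M_1>M_0$''. That requires the constant in front of $M_0^2$ to be exactly $1$.
\begin{itemize}
\item With the Friedrichs symmetrizer $\mathrm{diag}(-p_v,1)$, your energy $\frac12\int\big((-p_v)|\partial_\xi^k\phi|^2+|\partial_\xi^k\psi|^2\big)d\xi$ is only \emph{equivalent} to the $H^2$ norm.
\item What you actually obtain is $\|(\phi,\psi)(t)\|_{H^2}^2\le\Lambda\|(\phi_0,\psi_0)\|_{H^2}^2+o(1)$ as $T\to0$. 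Here $\Lambda\ge1$ is fixed by the range of $-p_v=R\theta/v^2$, and $\Lambda=1$ only if $-p_v\equiv1$.
\end{itemize}

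No sharper estimate can remove $\Lambda$, because the statement with arbitrary $M_1>M_0$ is false for this system. Here is a counterexample.
\begin{itemize}
\item Take $u_0=\underline u$, $\theta_0=\underline\theta$, and $v_0-\underline v=M_0K^{-3/2}g(K(\xi-10))$. Let $g$ be a fixed wave packet whose Fourier transform is concentrated near $\pm1$, normalized by $\|g''\|_{L^2}=1$.
\item Near $\xi=10$ the background is the constant state $(v_+,u_+,\theta_+)$.
\item At frequency $K\gg1$, heat conduction slaves $\vartheta$ to $O(K^{-1})\psi$ within time $O(K^{-2})$.
\item The acoustic eigenvalues are $\pm iK\sqrt{a_+}+O(1)$ with $a_+=p_+/v_+$.
\item Nonlinear corrections are negligible over times $O(K^{-1})$, since $\|\phi\|_{L^\infty}=O(K^{-3/2})$.
\end{itemize}
Hence $(\phi,\psi)$ behaves like the isothermal standing wave $\hat\phi\approx\hat\phi_0\cos(\sqrt{a_+}kt)$, $\hat\psi\approx i\sqrt{a_+}\,\hat\phi_0\sin(\sqrt{a_+}kt)$. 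At $t_K=\pi/(2K\sqrt{a_+})\to0$ the $H^2$ norm is approximately $\sqrt{a_+}\,M_0$. So for $a_+>1$ (or for $a_+<1$, using $\psi$-data instead) no $T_0$ exists once $M_0<M_1<\max(\sqrt{a_+},1/\sqrt{a_+})M_0$.

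What your argument does prove is the version with $M_1\ge C_*M_0$, where $C_*=C_*(\underline\kappa_1,\overline\kappa_1,R)$. Equivalently, you can keep $M_1>M_0$ but state the bound in the symmetrized energy norm. That weaker version is all the paper needs: in the bootstrap argument the ratio $M_1/M_0$ is at least $C_0+1$, and $C_0$ may be enlarged. The ``any $M_1>M_0$'' formulation appears to be carried over from the viscous setting. There $v$ only solves $v_t-\sigma v_\xi=u_\xi$ with parabolic $u$, so no symmetrizer is needed and the constant really is $1$.
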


\subsection{Construction of shift}

The continuation argument is fundamentally based on the {\it a priori} estimates established in Proposition \ref{prop2}. As these estimates explicitly depend on the shift function, we now introduce its definition. The shift is constructed by means of a weight function $a:\mathbb{R}\to\mathbb{R}$ prescribed in \eqref{weight}. For the purposes of the subsequent analysis, it suffices to note that $a$ satisfies the uniform bound $\|a\|_{C^1(\mathbb{R})}\leq 2.$ Having fixed this weight function, we define the shift $\mathbf{X}$ as the solution to the following ODE:
\begin{equation}\label{X(t)}
\begin{cases}
\begin{aligned}
\dot{\mb{X}}(t)
={}&-\frac{M}{\delta_S}\Big[\int_{\mathbb{R}}a(\xi-\mb{X})
\Big[u^S_\xi(\xi-\mb{X})(u-\bar u)
+\frac{R\theta^S_\xi(\xi-\mb{X})}{(\gamma-1)\bar\theta}(\theta-\bar\theta)\\[4mm]
&\qquad\qquad\qquad\qquad\qquad
+\frac{\bar p v^S_\x(\xi-\mb{X})}{\bar v}(v-\bar v)\Big]d\xi,
\end{aligned}\\[4mm]
\mb X(0)=0,
\end{cases}
\end{equation}
where $M$ is the specific constant chosen as $
M:=\frac{3\gamma(\gamma+1)p^*}{2(v^*)^2(\sigma^*)^3}$, which will be used in the proof of Lemma \ref{lem-sharp1}.

The following lemma ensures that \eqref{X(t)} has a unique absolutely continuous solution defined on any interval in time $[0,T]$ for which   \eqref{bddab} is verified.

\begin{lemma} \cite{CKKV20} \label{lem_ckkv}
	Let $p>1$ and $T>0$. Suppose that a function 
	$F:[0,T]\times\bbr\rightarrow\bbr$  satisfies 
	$$\sup_{x\in\bbr }|F(t,x)|\leq f(t)\ \ \mbox{and}\ \ 
	\sup_{x,y\in\bbr,x\neq y }\Big|\frac{F(t,x)-F(t,y)}{x-y}\Big|\leq g(t),
	\quad \mbox{for } t\in[0,T] $$ for some functions $f \in L^1(0,T)$ and $\, g\in L^p(0,T)$. Then for any $x_0\in\bbr$, there exists a unique absolutely continuous function $\mb{X}:[0,T]\rightarrow \bbr$ satisfying
	\begin{equation}\label{ode_eq}
\begin{cases}
\begin{aligned}
\dot{\mb{X}}(t)&=F(t,\mb{X}(t)),\quad\mbox{for \textit{a.e.} }t\in[0,T],\\
\mb{X}(0)&=x_0.
\end{aligned}
\end{cases}
\end{equation}
\end{lemma}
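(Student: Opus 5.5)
The statement is a Carathéodory-type existence and uniqueness result for an ODE whose right-hand side is only integrable in time but uniformly Lipschitz in space with an $L^p$ (with $p>1$) Lipschitz constant. I would prove it by a Picard iteration, or equivalently by a Banach fixed point argument, in $C([0,T])$ with an exponentially weighted norm, after rewriting \eqref{ode_eq} in integral form
\[
\mb X(t)=x_0+\int_0^t F(s,\mb X(s))\,ds .
\]
A first point to settle is measurability. For each continuous $\mb Y$, the map $s\mapsto F(s,\mb Y(s))$ must be measurable. I would assume, as is implicit in the lemma and true in the application \eqref{X(t)}, that $F$ is measurable in $t$ for each fixed $x$. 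Since $F$ is Lipschitz, hence continuous, in $x$, it is a Carathéodory function, so the composition is measurable. Moreover $|F(s,\mb Y(s))|\le f(s)\in L^1$, so the integral operator $\Phi(\mb Y)(t):=x_0+\int_0^tF(s,\mb Y(s))ds$ maps $C([0,T])$ into absolutely continuous functions.

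The key step is the contraction estimate. Set $G(t):=\int_0^t g(s)\,ds$. Since $g\in L^p(0,T)\subset L^1(0,T)$, the function $G$ is finite and absolutely continuous. I would use the weighted norm
\[
\|\mb Y\|_*:=\sup_{t\in[0,T]}e^{-2G(t)}|\mb Y(t)| ,
\]
which is equivalent to the sup norm. Then
\[
|\Phi\mb Y(t)-\Phi\mb Z(t)|\le\int_0^t g(s)|\mb Y-\mb Z|(s)\,ds\le \|\mb Y-\mb Z\|_*\int_0^t g(s)e^{2G(s)}ds\le \tfrac12 e^{2G(t)}\|\mb Y-\mb Z\|_* .
\]
Hence $\|\Phi\mb Y-\Phi\mb Z\|_*\le\frac12\|\mb Y-\mb Z\|_*$. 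Banach's theorem gives a unique continuous fixed point, and this fixed point is absolutely continuous and satisfies \eqref{ode_eq} a.e. by the Lebesgue differentiation theorem.

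Uniqueness within the class of absolutely continuous solutions follows from the same estimate, or directly from Grönwall's inequality applied to $|\mb X_1-\mb X_2|$ with the integrable kernel $g$. Note that only $g\in L^1$ is actually needed, so the hypothesis $p>1$ is used only to guarantee integrability.

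The only delicate point I expect is the measurability issue, which I would dispose of via the Carathéodory property just described. For the later application to \eqref{X(t)}, one also has to verify the bounds on $F$, namely $f\lesssim\delta_S^{-1}\|\cdot\|_{L^2}\|v^S_\xi\|_{L^2}$ and a similar bound for $g$ using $\|a\|_{C^1}\le2$ and \eqref{shock-base}. That verification belongs to the application of the lemma, not to its proof.
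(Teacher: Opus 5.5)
Your proof is correct. The paper does not actually prove this lemma. It imports it from \cite{CKKV20}, and in the proof of Lemma \ref{lem:xex} it only calls it an adaptation of the Cauchy--Lipschitz theorem, so your write-up supplies what the paper leaves to the reference.

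Your route differs from the standard short-time argument, which is where the hypothesis $p>1$ naturally enters. That argument works as follows.
\begin{enumerate}
\item One contracts in the plain sup norm on intervals $[t_0,t_0+h]$.
\item H\"older's inequality $\int_{t_0}^{t_0+h} g\,ds\le h^{1-1/p}\|g\|_{L^p(0,T)}$ gives a step length $h$ independent of $t_0$.
\item One then patches finitely many such intervals to reach $T$.
\end{enumerate}
Your exponentially weighted (Bielecki-type) norm with weight $e^{-2G(t)}$ absorbs the time-dependent Lipschitz factor all at once. It yields a single global contraction on $C([0,T])$ and makes clear that $g\in L^1(0,T)$ suffices. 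Even in the patching argument, uniform continuity of $G$ on $[0,T]$ would already give a uniform step, so $p>1$ is only a convenience.

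Your explicit Carath\'eodory assumption, that $t\mapsto F(t,x)$ is measurable for each fixed $x$, is a worthwhile addition. The lemma as stated omits it, and the integral formulation you use needs it.

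One small remark on your last paragraph, which concerns the application rather than the lemma itself. In Lemma \ref{lem:xex} the paper checks the hypotheses with constant $f$ and $g$, bounding the perturbation in $L^\infty$ and using $\|v^S_\xi\|_{L^1}\le C\delta_S$. That is what the $L^\infty$-only assumptions of that lemma permit. An $L^2$-based bound for $f$ would require integrability of the perturbation that is not assumed there.
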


\begin{lemma}\label{lem:xex}
	For any $c_1,c_2>0$, there exists a constant $C>0$ such that the following is true.  For any $T>0$, and any   function $v,\theta\in L^\infty ((0,T)\times \R)$ 
	verifying
	\beq\label{odes}
	c_1 \le v(t,x), \theta(t,x)\le c_2,\qquad \forall (t,x)\in [0,T]\times \bbr,
	\eeq
	the ODE \eqref{X(t)} has a unique absolutely continuous solution $\mb X$ on $[0,T]$. Moreover, 
	\beq\label{roughx}
	|{\mb X}(t)| \le Ct,\quad \forall t\le T.
	\eeq
\end{lemma}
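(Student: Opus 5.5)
The plan is to apply Lemma \ref{lem_ckkv} with $x_0=0$ and with $F(t,\mb X)$ given by the right-hand side of \eqref{X(t)}, viewed as a function of the shift variable $\mb X$ for each fixed $t$. The task is to find $f\in L^1(0,T)$ and $g\in L^p(0,T)$ (we will in fact take $p=\infty$, or any finite $p$ on bounded intervals) that give the sup bound and the Lipschitz bound.

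\textbf{Sup bound.} Under \eqref{odes}, and since $\bar v,\bar\theta$ are sums of wave profiles with values in a compact subset of $(0,\infty)$, the coefficients $\frac{R}{(\gamma-1)\bar\theta}$ and $\frac{\bar p}{\bar v}$ are uniformly bounded. The same holds for $u-\bar u$, $v-\bar v$ and $\theta-\bar\theta$, provided $u$ is also assumed bounded; I would add this boundedness, which is implicit in the setting, or else use the $L^2$ formulation together with Cauchy--Schwarz. The shock derivatives satisfy $\|(v^S_\xi,u^S_\xi,\theta^S_\xi)\|_{L^1}\le C\delta_S$ by Lemma \ref{lemma1.3}, and $\|a\|_{C^1}\le 2$. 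Together these give $|F(t,\mb X)|\le \frac{M}{\delta_S}\,C\delta_S=C$ uniformly in $(t,\mb X)$. So $f\equiv C$, which lies in $L^1(0,T)$.

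\textbf{Lipschitz bound.} The dependence on $\mb X$ enters in two ways. The first is directly, through $a(\xi-\mb X)$ and the shock profiles evaluated at $\xi-\mb X$. The second is through $\bar u,\bar v,\bar\theta$, which contain $u^S(\xi-\mb X)$ and so on.
\begin{itemize}
\item Differentiating the first kind of dependence in $\mb X$ produces $a'$, $v^S_{\xi\xi}$, $u^S_{\xi\xi}$ and $\theta^S_{\xi\xi}$. These are bounded in $L^1$ by $C\delta_S$ via $|v^S_{\xi\xi}|\le C\delta_S|v^S_\xi|$ and $\|a\|_{C^1}\le2$.
\item Differentiating the second kind gives terms such as $v^S_\xi\,u^S_\xi$, bounded by $C\delta_S^2\cdot$(an $L^1$ function), together with derivatives of the smooth coefficients $1/\bar\theta$ and $\bar p/\bar v$, which stay bounded because everything is bounded away from zero.
\end{itemize}
Hence $|\partial_{\mb X}F|\le C$, which by the mean value theorem gives $g\equiv C$. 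Measurability in $t$ is standard. Lemma \ref{lem_ckkv} then yields a unique absolutely continuous solution on $[0,T]$, and \eqref{roughx} follows from $|\dot{\mb X}|\le C$ by integrating.

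\textbf{Main obstacle.} The only delicate point is making sure the constant is independent of $\delta_S$ despite the prefactor $M/\delta_S$. This works because every integral contains a shock-derivative factor whose $L^1$ norm is $O(\delta_S)$, and the exponential decay in Lemma \ref{lemma1.3} supplies exactly this: $\int C\delta_S^2 e^{-C\delta_S|\xi|}\,d\xi=O(\delta_S)$.
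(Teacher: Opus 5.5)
Your proposal is correct and follows the paper's proof. You apply Lemma \ref{lem_ckkv} with constant $f$ and $g$, obtained from $\|a\|_{C^1}\le 2$, the $C^2$ bounds on the shock profile, and $\|(v^S_\xi,u^S_\xi,\theta^S_\xi)\|_{L^1}\le C\delta_S$ to cancel the $M/\delta_S$ prefactor, and then integrate $|\dot{\mb X}|\le C$ to get \eqref{roughx}. Your remark that $u$ must also be bounded is apt: the paper's proof likewise uses $\|u-\bar u\|_{L^\infty}$, even though \eqref{odes} only constrains $v$ and $\theta$.
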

\begin{proof}
	We will use the above lemma as a simple adaptation of the well-known Cauchy-Lipschitz theorem.  

	To apply Lemma \ref{lem_ckkv}, let $F(t,\mb{X})$ denote the right-hand side of the ODE \eqref{X(t)}. \\
	Then the sufficient conditions of the above lemma are verified thanks to the facts that $\|a\|_{C^1(\bbr)}\leq 2$, $\|(v^S,u^S,\theta^S)\|_{C^2(\bbr)}\leq C$, and $\|( v^S_\xi, u^S_\x,\theta^S_\x)\|_{L^1} \le C\delta_S$.
	Indeed, using \eqref{odes}, we find that for some constant $C>0$,
	\beq\label{f1t}
	\begin{array}{l}
		\di \sup_{\mb{X}\in\bbr}|F(t,\mb{X})| \le \frac{C}{\deltas}  \|\big(v-\bar v, u-\bar u, \theta-\bar\theta) \|_{L^\infty(\bbr)} \int_\bbr |( v^S_\xi, u^S_\x,\theta^S_\x)| d\xi\le C,
	\end{array}
	\eeq
	and
	\begin{align*}
		\begin{aligned}
			\sup_{\mb{X}\in \bbr} |\partial_{\mb{X}}F(t,\mb{X})| &\le  \frac{C}{\deltas} \|a\|_{C^1}\|\big(v-\bar v, u-\bar u, \theta-\bar\theta) \|_{L^\infty(\bbr)} \int_\bbr |( v^S_\xi, u^S_\x,\theta^S_\x)|d\xi  \le C.
		\end{aligned}
	\end{align*}
	Especially, since $|\dot{\mb X}(t)| \le C$ by \eqref{f1t}, we have \eqref{roughx}.
\end{proof}

\subsection{Global existence and asymptotic stability}

To prove our main result Theorem \ref{thm:main}, it is sufficient to prove the following global existence and uniform estimates.
\begin{proposition}[global existence and uniform estimates]\label{prop3}
	Let $(v_-,u_-,\theta_-)\in\mathbb{R}_+\times\mathbb{R}\times\mathbb{R}_+$ be a given state. Denote by $(\bar v,\bar u,\bar\theta)$ the composite wave profile constructed in \eqref{shwave}, in which the shift $\mathbf{X}$ is only performed in the viscous shock and is determined as the absolutely continuous solution to \eqref{X(t)} with respect to the weight function $a$ specified in \eqref{weight}.
    Then, there exist small, positive constants $\delta_0$ and $\eps_0$ such that if the wave strength satisfies
\[
    |v_+-v^*|+|v^*-v_*|+|v_*-v_-|\leq \delta_0,
\]
and the initial perturbation satisfies \eqref{i-p}, the Cauchy problem \eqref{NS}-\eqref{state} admits a unique global-in-time solution $(v,u,\theta)(t,x)$. Furthermore, there exists an absolutely continuous time-dependent shift $\mb{X}(t)$ (defined in \eqref{X(t)}) such that
	\begin{align}
		\begin{aligned}\label{infinest}
			&\sup_{t\geq0}\Big[\|(v-\bar v, u-\bar u, \theta-\bar \theta)(t,\cdot)\|^2_{H^2(\bbr)}\Big] +\deltas\int_0^{\infty}|\dot{\mb{X}}(\tau)|^2 d\tau+ \int_0^{\infty} ( \mathcal{G}^R(U) + \mathcal{G}^S(U))d\tau\\
			&\quad+\int_0^{\infty}\|(v-\bar v, u-\bar u)_x(\tau,\cdot)\|_{H^1(\bbr)}^2 d\tau+\int_0^{\infty}\|(\theta-\bar \theta)_x(\tau,\cdot)\|^2_{H^2(\bbr)} d\tau\\
			&\le C_0^2 \|\big(v_0(\cdot)-\bar v(0,\cdot),u_0(\cdot) -\bar u(0,\cdot),\theta_0(\cdot)-\bar\theta(0,\cdot)\big)\|^2_{H^2(\bbr)}  + C_0^2 \delta_0^{1/2} ,
		\end{aligned}
	\end{align}
	where 
	\begin{align}
		\begin{aligned}\label{maingood}
			&\mathcal{G}^R(U):= \int_\bbr|v^R_\x| |(v-\bar v, \theta-\bar \theta)|^2 d\x ,\\
			&\mathcal{G}^S(U):=\int_\bbr|(v^S)^{-\mb X}_\x| |(v-\bar v, u-\bar u, \theta-\bar \theta)|^2 d\x.
		\end{aligned}
	\end{align}
\end{proposition}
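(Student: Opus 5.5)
Proposition \ref{prop3} follows from a continuation argument built on local existence (Proposition \ref{prop:soln}) and the \emph{a priori} estimate (Proposition \ref{prop2}, proved in Sections 4--5). That estimate states: if on $[0,T]$ the solution satisfies
\[
\sup_{0\le t\le T}\|(v-\bar v,u-\bar u,\theta-\bar\theta)(t)\|_{H^2(\bbr)}\le \varepsilon_1
\]
with $\delta_0,\varepsilon_1$ small, then \eqref{infinest} holds on $[0,T]$ with a constant $C_0$ independent of $T$.

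\emph{Initial data.} First check that \eqref{i-p} makes the perturbation $\|(v_0-\bar v(0),u_0-\bar u(0),\theta_0-\bar\theta(0))\|_{H^2}$ small. To do so, pick smooth $(\underline v,\underline u,\underline\theta)$ satisfying \eqref{sm}. The profile $\bar U(0,\cdot)$ differs from $\underline U$ by an $H^2$-function of size $O(\delta_0^{1/2})$: the rarefaction, contact and shock profiles each converge exponentially (or in Gaussian fashion) to their end states by Lemmas \ref{lemma1.2}, \ref{lemma1.3} and \ref{Lemma 2.2.}. Hence
\[
\|U_0-\bar U(0)\|_{H^2}\le C(\eps_0+\delta_0^{1/2}).
\]
Choose $\eps_0,\delta_0$ so that $C_0^2\cdot(\text{this})^2+C_0^2\delta_0^{1/2}<\varepsilon_1^2/4$. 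The bounds $0<\underline\kappa_0\le v_0,\theta_0\le\overline\kappa_0$ then follow by Sobolev embedding.

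\emph{Continuation.} Apply Proposition \ref{prop:soln} to get a solution on $[0,T_0]$ with $v,\theta$ bounded above and below. Lemma \ref{lem:xex} then gives the shift $\mb X$, and by continuity in time the $H^2$ perturbation stays below $\varepsilon_1$ on a short interval. Let $T^*$ be the supremum of times on which the perturbation stays below $\varepsilon_1$. On $[0,T^*)$, Proposition \ref{prop2} yields the strictly better bound $\le \varepsilon_1/2$. Restarting the local theory from time $T^*-\eta$, with $T_0$ uniform because the $H^2$ norms and the upper/lower bounds on $v,\theta$ are uniform, contradicts finiteness of $T^*$. So the solution is global and \eqref{infinest} holds on $[0,\infty)$. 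Uniqueness comes from the local theory.

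\emph{Asymptotics.} These come afterwards, when proving Theorem \ref{thm:main}. Set $g(t)=\|(v-\bar v,u-\bar u,\theta-\bar\theta)_x(t)\|_{L^2}^2$. Then $g\in L^1(0,\infty)$ by \eqref{infinest}. Also $|g'|\in L^1$: differentiate using the equations \eqref{NS-1} and \eqref{bar-system} and bound with the dissipation in \eqref{infinest} and the decay of the error terms $Q_1,Q_2$. Hence $g(t)\to0$. Gagliardo--Nirenberg ($\|f\|_{L^\infty}^2\le\|f\|\,\|f_x\|$) then gives \eqref{con} for the perturbation. Passing from $\bar U$ to $\tilde U$ uses Lemma \ref{lemma1.2}(5). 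Finally, $|\dot{\mb X}|\le C\delta_S^{-1}\|U-\bar U\|_{L^\infty}\|U^S_\xi\|_{L^1}\le C\|U-\bar U\|_{L^\infty}\to0$, which gives \eqref{as}.

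\emph{Main obstacle.} The real difficulty is Proposition \ref{prop2} itself. There, the $a$-contraction relative entropy estimate must be closed without velocity dissipation: \eqref{sharp-D} turns temperature dissipation into the Poincar\'e-type good term. The density and velocity derivative dissipation $\int\|(v-\bar v,u-\bar u)_x\|_{H^1}^2$ must also be recovered through the wave structure \eqref{eq:rough_wave_intro}. In the step above, the point needing care is that $T_0$ in Proposition \ref{prop:soln} depends only on $M_0$ and the $\kappa$-bounds, and that the wave interaction errors are time-integrable; this produces the $\delta_0^{1/2}$ term.
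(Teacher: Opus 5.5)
Your proposal is correct and follows essentially the same route as the paper. You get smallness of $\|U_0-\bar U(0)\|_{H^2}$ from \eqref{i-p} together with the $O(\delta_0^{1/2})$ tails of the wave profiles relative to the auxiliary profile $(\underline v,\underline u,\underline\theta)$, take local existence from Proposition \ref{prop:soln}, and run a bootstrap on $T^*$ in which Proposition \ref{prop2} improves $\varepsilon_1$ to $\varepsilon_1/2$. One small correction: the restart time $T_0$ is not uniform for all time, because $\|\bar U(t)-\underline U\|_{H^2}$ grows like $\sqrt{\delta_0}(1+\sqrt t)$; it is uniform on bounded intervals, and that suffices since $T^*<\infty$ is assumed in the contradiction argument.
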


Once we prove Proposition \ref{prop3}, we can use it to prove \eqref{con} for the long-time behavior as follows. In fact, from the estimates \eqref{infinest}, it holds that
 \begin{equation*}
     \int_0^{\infty} \left(\|(v-\bar v, u-\bar u, \theta-\bar \theta)_x\|^2_{L^2(\bbr)} + \left| \frac{d}{d\tau} \|(v-\bar v, u-\bar u, \theta-\bar \theta)_x\|^2_{L^2(\bbr)}\right|\right) d\tau <\infty,
 \end{equation*}
 which implies 
 \begin{equation}\label{longtime}
     \lim_{t\to\infty}\|(v-\bar v, u-\bar u, \theta-\bar \theta)_x\|^2_{L^2(\bbr)}=0.
 \end{equation}
 By Sobolev's inequality, we have
  \begin{equation*}
     \|(v-\bar v, u-\bar u, \theta-\bar \theta)\|^2_{L^{\infty}(\bbr)}\leq C \|(v-\bar v, u-\bar u, \theta-\bar \theta)\|_{L^{2}(\bbr)}\|(v-\bar v, u-\bar u, \theta-\bar \theta)_x\|_{L^{2}(\bbr)},
 \end{equation*}
 which, together with \eqref{longtime}, implies
  \begin{equation*}
     \lim_{t\to\infty}\|(v-\bar v, u-\bar u, \theta-\bar \theta)\|_{L^\infty(\bbr)}=0.
 \end{equation*}
 Then, using Lemma \ref{lemma1.2} (5), \eqref{xprop} and above equality, we can prove \eqref{con}.  and \eqref{as}. As for \eqref{ext-main2}, using Lemma \ref{lemma1.2} (5) and Proposition \ref{prop3}, we can prove the estimate \eqref{ext-main2}. Therefore, we complete the proof of Theorem \ref{thm:main}.

\subsection{A priori estimates}
To prove Proposition \ref{prop3}, it suffices to show the following {\it a priori} estimates.

\begin{proposition}[{\it a priori} estimates] \label{prop2}	
	Let $(v_-,u_-,\theta_-)\in\mathbb{R}_+\times\mathbb{R}\times\mathbb{R}_+$ be a given state. Then there exist positive constants $C_0$, $\delta_1$, and $\varepsilon_1$ such that the following statement holds.	
	Assume that $(v,u,\theta)$ is a solution to the system \eqref{NS-1} on $[0,T]$ for some $T>0$. Denote by $(\bar v,\bar u,\bar\theta)$ the composite wave profile constructed in \eqref{shwave}, in which the shift $\mathbf{X}$ is only performed in the viscous shock and is determined as the absolutely continuous solution to \eqref{X(t)} with respect to the weight function $a$ specified in \eqref{weight}. Suppose that the composite wave strength satisfy $\delta_0=\deltar+\delta_C+\deltas \leq \delta_1$ 
	and that 
	\begin{align*}
		\begin{aligned}
			&(v -\bar v, u-\bar u)\in C([0,T];H^2(\bbr)), \\
			&\theta-\bar\theta\in C([0,T];H^2(\bbr)) \cap  L^2(0,T;H^3(\bbr)),
		\end{aligned}
	\end{align*}
	and {\it a priori} assumption
	\beq\label{apri-ass}
	\|(v-\bar v, u-\bar u, \theta-\bar\theta)\|_{L^\infty(0,T;H^2(\bbr))} \le \eps_1,
	\eeq
    where $\eps_1$ and $\delta_1$ are dependent on the states $v_{\pm}, u_{\pm}, \theta_{\pm},$ and coefficient‌s $R, \gamma, \kappa$.
	Then,  for all $t\le T$,
	\begin{align}
		\begin{aligned}\label{finest}
			&\sup_{t\in[0,T]}\Big[\|(v-\bar v, u-\bar u, \theta-\bar \theta)(t,\cdot)\|^2_{H^2(\bbr)}\Big] +\deltas\int_0^t|\dot{\mb{X}}(\tau)|^2 d\tau+\int_0^t ( \mathcal{G}^R(U) + \mathcal{G}^S(U))d\tau \\
			&\quad+\int_0^t\|(v-\bar v, u-\bar u)_x(\tau,\cdot)\|_{H^1(\bbr)}^2 d\tau+\int_0^t\|(\theta-\bar \theta)_x(\tau,\cdot)\|^2_{H^2(\bbr)} d\tau\\
			&\le C_0^2 \|\big(v_0(\cdot)-\bar v(0,\cdot),u_0(\cdot) -\bar u(0,\cdot),\theta_0(\cdot)-\bar\theta(0,\cdot)\big)\|^2_{H^2(\bbr)}  + C_0^2 \delta_0^{1/2} ,
		\end{aligned}
	\end{align}
	where $ \mathcal{G}^R(U),  \mathcal{G}^S(U)$ are as defined in \eqref{maingood}.
	In addition, by \eqref{X(t)},
	\beq\label{xprop}
	|\dot{\mb{X}}(t)|\leq C_0\|(v-\bar v,u-\bar u, \theta-\bar \theta)(t,\cdot)\|_{L^\infty(\bbr)},\qquad t\le T.
	\eeq
\end{proposition}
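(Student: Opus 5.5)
We plan to prove Proposition \ref{prop2} in two stages: a weighted relative entropy estimate at the $L^2$ level using the $a$-contraction with shifts, followed by higher-order energy estimates. These higher-order estimates include separate space-time estimates that recover the dissipation of $\phi=v-\bar v$ and $\psi=u-\bar u$. Throughout we write $\vartheta=\theta-\bar\theta$. The bound \eqref{xprop} is immediate from \eqref{X(t)}: since $\|a\|_{C^1}\le 2$ and $\|(v^S_\xi,u^S_\xi,\theta^S_\xi)\|_{L^1}\le C\deltas$ by Lemma \ref{lemma1.3}, the prefactor $M/\deltas$ is compensated.

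\textbf{Step 1 ($L^2$ estimate).} We use the relative entropy $\eta(U|\bar U)$ of \eqref{NS-1} with respect to the composite wave \eqref{shwave}, weighted by $a(\xi-\mb X(t))$. Here $a=1+\lambda\,\frac{v_+-v^S}{\deltas}$ (up to sign convention), so that $a_\xi\sim \frac{\lambda}{\deltas}|v^S_\xi|$. We then compute $\frac{d}{dt}\int a\,\eta\,d\xi$ using \eqref{bar-system}. This produces the following terms:
\begin{itemize}
\item the shift term $\dot{\mb X}\,Y(U)$, which by \eqref{X(t)} equals $-\frac{\deltas}{M}|\dot{\mb X}|^2$ plus remainders;
\item the weight term $\int a_\xi\,(\cdot)$;
\item the shock-compression terms $\int a\,|v^S_\xi|(\cdot)$ and the rarefaction terms, which are nonnegative by Lemma \ref{lemma1.2}(1) and give $\mathcal G^R$;
\item the temperature dissipation $\kappa\int a\,|\vartheta_\xi|^2/(v\theta)$;
\item the interaction and error terms $Q_1,Q_2$.
\end{itemize}
The key point is the absence of a $\psi_\xi$ dissipation. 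We therefore express the leading bad terms, namely the quadratic form in $(\phi,\psi,\vartheta)$ weighted by $v^S_\xi$, via \eqref{shock-vu}, \eqref{theta-s} and the linearized Hugoniot relations. In this way the quadratic form reduces, up to $O(\deltas)$ errors, to a one-dimensional quantity in a single variable $w$ built essentially from $\vartheta$, combined with $p$-perturbations. We change variables $y=(v^*-v^S)/\deltas\in(0,1)$ and use \eqref{sharp-D} to rewrite the dissipation $\int|\vartheta_\xi|^2$ as $\frac{C}{\deltas}\int_0^1 y(1-y)|\partial_y w|^2dy$ with the sharp constant $\alpha^*R\gamma/(\gamma-1)^2$. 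The Poincaré-type inequality $\int_0^1|f-\bar f|^2\le\frac12\int_0^1 y(1-y)|f'|^2$ then controls the bad terms, provided $M$ is tuned as stated. The mean part $\bar f$ is absorbed by $\frac{\deltas}{M}|\dot{\mb X}|^2$, and the remaining directions are controlled by the good term coming from $a_\xi$ with $\deltas\ll\lambda\ll1$. This gives the $L^2$ bound with $\deltas\int|\dot{\mb X}|^2+\int(\mathcal G^R+\mathcal G^S)+\int\|\vartheta_\xi\|^2$ on the left. The wave interaction errors are bounded by $C\delta_0^{1/2}$: for rarefaction–shock separation we use Lemma \ref{lemma1.2}(3)(4) together with $|\mb X|\le Ct$ and the exponential decay in Lemma \ref{lemma1.3}, and for the contact wave we use the heat-kernel bounds of Lemma \ref{Lemma 2.2.}. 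I expect this step, with its exact bookkeeping of the constants, to be the main obstacle.

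\textbf{Step 2 (recovering $\phi_\xi,\psi_\xi$).} Guided by \eqref{eq:rough_wave_intro}, we derive the missing dissipation from the linearized mass and momentum equations in three parts.
\begin{itemize}
\item Multiply the momentum equation by $-\phi_\xi$ and use the mass equation to write $\psi_\xi=\phi_t-\sigma\phi_\xi+\dots$. This yields $\frac{d}{dt}\int(-\psi\phi_\xi)+\int c^2|\phi_\xi|^2\le C\|\vartheta_\xi\|^2+\dots$, which gives $\int_0^t\|\phi_\xi\|^2$.
\item Similarly, multiply the time-differentiated structure by $\psi_\xi$: applying $\partial_t-\sigma\partial_\xi$ to the mass equation and testing against $\psi_\xi$ (equivalently, using $\psi_\xi$ in the energy equation, $p\psi_\xi=\frac{R}{\gamma-1}(\partial_t-\sigma\partial_\xi)\vartheta-\kappa(\vartheta_\xi/v)_\xi+\dots$) gives $\int_0^t\|\psi_\xi\|^2\lesssim$ temperature dissipation at one order higher, plus time-boundary terms.
\item Repeat both arguments at the next derivative level.
\end{itemize}

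\textbf{Step 3 (closing the higher-order estimates).} We carry out standard $H^1$ and $H^2$ energy estimates of the symmetrized system, which give $\|\vartheta_{\xi\xi}\|,\|\vartheta_{\xi\xi\xi}\|$ dissipation. We combine them with Step 2 using small multipliers so that cross terms are absorbed, and we control nonlinear terms by \eqref{apri-ass} with $\eps_1,\delta_1$ small. Adding Step 1 then closes \eqref{finest}.
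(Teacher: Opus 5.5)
Your architecture matches the paper's. You use a shifted, $a$-weighted relative entropy closed by \eqref{sharp-D}, the Poincar\'e inequality \eqref{poincare} and the choice of $M$. You then run symmetrized $H^1/H^2$ estimates giving $\vartheta_{\xi\xi},\vartheta_{\xi\xi\xi}$. Finally, you get space-time estimates by testing the momentum equation against a multiple of $-\phi_\xi$ and the energy equation against $\psi_\xi$. Using $\vartheta$ rather than the paper's $w=u^{\mb X}-\bar u^{\mb X}$ as the Poincar\'e variable is harmless.

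\textbf{The gap: the viscous contact wave.} In the relative entropy identity, and again in every higher-order estimate, the contact wave produces terms that are quadratic in the perturbation, not source errors. Examples are $R(\theta^C_t-\s\theta^C_\xi)\Phi(v/\bar v)$, $\bar p\,u^C_\xi(v-\bar v)^2/(v\bar v)$, $|\theta^C_\xi|^2|\phi|^2$, $\theta^C_{\xi\xi}\vartheta^2$, and later $|v^C_\xi|\,|\psi_\xi|\,|(\phi,\vartheta)|$. The rarefaction terms combine into $-\mathcal G^R$ because $u^R_\xi>0$, but these contact terms have no sign. Lemma \ref{Lemma 2.2.} only bounds them by $C\delta_C(1+t)^{-1}\int_{\bbr}e^{-\tilde C|\xi+\s t|^2/(1+t)}|(\phi,\vartheta)|^2d\xi$. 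No pointwise heat-kernel bound makes this integrable in time. Pairing with $\|(\phi,\vartheta)\|_{L^2}^2$ gives $\delta_C\eps_1^2\log(1+t)$, and pairing with $\|(\phi,\vartheta)\|_{L^\infty}^2$ gives $\int_0^t(1+\tau)^{-1/2}d\tau$. So these terms cannot be lumped into ``wave interaction errors $\le C\delta_0^{1/2}$'' as you propose.

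The paper handles them with a separate weighted space-time estimate, Lemma \ref{cw-lemma} (adapted from Kang--Vasseur--Wang). It bounds $\int_0^t\!\int_{\bbr}W^2|(\phi,\psi,\vartheta)|^2$ by $\sup_t\|U-\bar U\|_{L^2}^2$, $\deltas\int_0^t|\dot{\mb X}|^2$, $\int_0^t(\mathcal G^S+\mathcal G^R+\mb D)$, $\int_0^t\|(\phi,\psi)_\xi\|_{L^2}^2$ and $C\delta_0^{1/3}$. The small prefactor ($\delta_C$, resp.\ $\eps_1+\delta_0$) in front of the $W$-term is what lets it be absorbed.

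\textbf{Consequence for your Step 1.} It cannot be closed on its own. Without velocity viscosity, $\int_0^t\|(\phi,\psi)_\xi\|^2$ is available only from your Step 2. So the $L^2$ estimate necessarily carries $C(\eps_1+\delta_0)\int_0^t\|(\phi,\psi)_\xi\|^2$ on the right-hand side. This term enters through Lemma \ref{cw-lemma}, and also through the cubic term $\int a^{-\mb X}_\xi|\psi|^3$, which needs $\|\psi\|_{L^\infty}^2\le\|\psi\|\|\psi_\xi\|$. The $L^2$, $H^1/H^2$, space-time and contact-wave estimates must therefore be closed simultaneously.

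\textbf{Smaller slips.} (i) Since $v^S_\xi>0$ and $\s>0$, the weight must be increasing: $a=1+\frac{\lam}{\deltas}(v^S-v^*)$, with $y=(v^S-v^*)/\deltas$.
(ii) The change of variables gives $\int|\vartheta_\xi|^2d\xi\approx C\deltas\int_0^1y(1-y)|\vartheta_y|^2dy$, not $\frac{C}{\deltas}(\cdots)$. Both $\mb B_1$ and $\mb D$ are $O(\deltas)$, which is exactly why the sharp constant in \eqref{sharp-D} matters.
(iii) Testing the momentum equation with $-\phi_\xi$ alone gives $c^2\|\phi_\xi\|^2\le\|\psi_\xi\|^2+C\|\vartheta_\xi\|^2+\cdots$, so the two multipliers must be weighted. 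The paper uses $-\frac p2\phi_\xi$ and $\psi_\xi$, producing $\frac{p^2}{2v}\phi_\xi^2+\frac p2\psi_\xi^2$.
(iv) Separating the shock from the rarefaction and contact waves needs $|\mb X(t)|\le C\eps_1 t$ from \eqref{xprop}, not just $|\mb X|\le Ct$.
(v) $\|Q_2^R\|_{L^1}$ decays only like $(1+t)^{-1}$. It must be paired as $\|Q^R_2\|_{L^1}\|\vartheta\|^{1/2}\|\vartheta_\xi\|^{1/2}$, with $\|\vartheta_\xi\|$ absorbed into $\mb D$, leaving the time-integrable $\|Q_2^R\|_{L^1}^{4/3}$.
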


We postpone the proof of this key proposition to Sections \ref{sec-acontraction} and \ref{sec-vu}. 

\section{Relative entropy estimates}\label{sec-acontraction}
\setcounter{equation}{0}
This section is devoted to the proof of the following lemma by using the relative entropy method and the $a$-contraction with shifts.
\begin{lemma}\label{lem-zvh}
	Under the hypotheses of Proposition \ref{prop2}, there exists $C>0$ (independent of $\delta_1, \eps_1, T$) such that for all $t\in [0,T]$,
	\begin{align}
		\begin{aligned}\label{esthv}
			&\sup_{t\in[0,T]}\Big[\|(v-\bar v, u-\bar u, \theta-\bar \theta)(t,\cdot)\|^2_{L^2(\bbr)}\Big] +\deltas\int_0^t|\dot{\mb{X}}(\tau)|^2 d\tau \\
			&\quad+\int_0^t ( \mathcal{G}^R(U) + \mathcal{G}^S(U)) d\tau +\int_0^t\|( \theta-\bar \theta)_x(\tau,\cdot)\|^2_{L^2(\bbr)} d\tau\\
			&\leq C \|\big(v_0(\cdot)-\bar v(0,\cdot),u_0(\cdot) -\bar u(0,\cdot),\theta_0(\cdot)-\bar\theta(0,\cdot)\big)\|^2_{L^2(\bbr)} \\
            &\quad+C(\eps_1+\delta_0)\int_0^t\|(v-\bar v,u-\bar u)_x(\tau,\cdot)\|^2_{L^2(\bbr)} d\tau+ C\delta_0^{1/2},
		\end{aligned}
	\end{align}
	where the good terms $ \mathcal{G}^R(U),  \mathcal{G}^S(U)$ are defined in \eqref{maingood}.
\end{lemma}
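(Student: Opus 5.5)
We plan to prove Lemma \ref{lem-zvh} by the weighted relative entropy method with the shift \eqref{X(t)}, in the spirit of \cite{KVW25}. The only change is that the velocity dissipation used there is replaced by the temperature dissipation, combined with \eqref{sharp-D}. Set $U=(v,u,\theta)$, $\bar U=(\bar v,\bar u,\bar\theta)$ and $\phi=v-\bar v$, $\psi=u-\bar u$, $\vartheta=\theta-\bar\theta$. We use the relative entropy density
\[
\eta(U|\bar U)=R\bar\theta\,\Phi\!\left(\tfrac{v}{\bar v}\right)+\tfrac{\psi^2}{2}+\tfrac{R}{\gamma-1}\bar\theta\,\Phi\!\left(\tfrac{\theta}{\bar\theta}\right),\qquad \Phi(z)=z-1-\log z,
\]
which is equivalent to $|(\phi,\psi,\vartheta)|^2$ by the a priori bound \eqref{apri-ass}. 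With the weight $a(\xi-\mb X(t))$ from \eqref{weight}, whose derivative has size $\lambda/\delta_S\cdot|v^S_\xi|$ with $\delta_S\ll\lambda\ll1$, we compute $\frac{d}{dt}\int a^{-\mb X}\bar\theta^{-1}\eta(U|\bar U)\,d\xi$ from \eqref{NS-1} and \eqref{bar-system}. The result has the form
\[
\dot{\mb X}\,\mathbf Y+\mathbf B-\mathbf G,
\]
where $\mathbf Y$ is exactly the integral appearing in \eqref{X(t)}, up to quadratic terms.

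\textbf{Splitting the terms.} The choice of shift gives $\dot{\mb X}\mathbf Y=-\frac{\delta_S}{M}|\dot{\mb X}|^2+\ldots$, which produces the $\delta_S|\dot{\mb X}|^2$ term. The remaining terms are grouped as follows.
\begin{itemize}
\item Rarefaction good terms. Since $u^R_\xi>0$ and $v^R_\xi>0$ (Lemma \ref{lemma1.2}), the quadratic part in $v^R_\xi$ is positive definite in $(\phi,\vartheta)$; this gives $\mathcal G^R$.
\item Shock good terms. The weight gives $\frac{\lambda}{\delta_S}\int|v^S_\xi|\,|\psi+p\text{-type combination}|^2$ together with the hyperbolic flux terms.
\item Diffusion. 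We get $\kappa\int a\,\vartheta_\xi^2/(v\theta)$.
\item Bad terms. These are the weight-flux term $\int a_\xi\,(\bar p-p)\psi$, the terms from the shock derivatives, and the error terms $Q_1,Q_2$.
\end{itemize}

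\textbf{Main obstacle: the leading bad shock terms.} In \cite{KVW25} the leading bad shock terms are absorbed by a Poincaré inequality \eqref{poincare} in the variable $y=(v^S-v^*)/(v_+-v^*)$, using $\mu\int|\psi_\xi|^2$. We lack that dissipation. First, on the shock layer we use \eqref{shock-vu} and \eqref{theta-s} to express $\psi$ and $\phi$ through a single combination. We localize via $\frac{p-\bar p}{\bar p}\approx -\frac{\phi}{\bar v}+\frac{\vartheta}{\bar\theta}$ and choose the principal variable $w=\vartheta$ up to $O(\delta_S)$ corrections, so that the bad part reduces to $\frac{C}{\delta_S}\int|v^S_\xi|\,|w-\bar w|^2$ plus $\int|v^S_\xi|\,|w|^2$. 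Second, we change variables $\xi\mapsto y$. By \eqref{sharp-D}, $\frac{dy}{d\xi}$ equals $\frac{\delta_S}{\kappa}\,y(1-y)\,v^S\,\alpha^*\frac{R\gamma}{(\gamma-1)^2}$ up to $O(\delta_S^2)$. Hence
\[
\kappa\int\frac{\vartheta_\xi^2}{v\theta}\,d\xi\ \ge\ (1-C\delta_S)\,\delta_S\,c_*\int_0^1 y(1-y)\,|\partial_y\vartheta|^2\,dy .
\]
Poincaré's inequality on $[0,1]$ then controls $\int_0^1|\vartheta-\bar\vartheta|^2\,dy$, and $M$ in \eqref{X(t)} is tuned so that the constants match and the mean part is absorbed by the $|\dot{\mb X}|^2$ term. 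We expect the exact constant matching, which is sharp as in the $a$-contraction argument, to be the delicate step. The $O(\delta_S)$ mismatches between $\phi,\psi$ and $\vartheta$ leave $\int|v^S_\xi|\,|(\phi,\psi)|^2$ terms with small coefficient; these are absorbed by the $\lambda/\delta_S$ weight good term, giving $\mathcal G^S$.

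\textbf{Remaining terms.} The nonlinear cubic terms are bounded by $C\eps_1$ times the good terms, plus $C\eps_1\|(\phi,\psi)_\xi\|^2$ where Sobolev interpolation is needed; this is the origin of the $(\eps_1+\delta_0)\int\|(\phi,\psi)_x\|^2$ term on the right of \eqref{esthv}.
\begin{itemize}
\item Interaction terms. $Q^I_1$ and $Q^I_2$ are bounded using the exponential separation of the waves (Lemmas \ref{lemma1.2}, \ref{lemma1.3}, \ref{Lemma 2.2.}), giving $C\delta_0 e^{-c\delta_0 t}\|(\phi,\psi,\vartheta)\|$.
\item Contact wave terms. $Q^C_1$ and the terms in $\theta^C_\xi$ are handled by the heat-kernel estimate of \cite{HLM}: $\int_0^t\int (1+\tau)^{-1}e^{-c\xi^2/(1+\tau)}|(\phi,\vartheta)|^2$ is bounded by dissipation plus $C\delta_C$, and these parts need $\vartheta_\xi$ and $\phi_\xi$, hence the $\delta_0\|\phi_\xi\|^2$ loss.
\item Rarefaction error. $Q^R_2$ gives $\int_0^t\|Q^R_2\|_{L^1}$-type bounds producing $C\delta_R^{1/2}$.
\end{itemize}
Integrating in time, using Grönwall-free absorption and $\delta_0,\eps_1$ small, yields \eqref{esthv}.
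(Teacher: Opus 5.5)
Your overall plan matches the paper's: the weight $a$, the shift \eqref{X(t)}, \eqref{sharp-D} to turn the heat dissipation into $\delta_S\int_0^1 y(1-y)|\partial_y\vartheta|^2dy$, then \eqref{poincare}, with the same treatment of rarefaction, contact and interaction errors. Taking $\vartheta$ rather than the paper's $w=u^{\mathbf X}-\bar u^{\mathbf X}$ as principal variable is harmless. The step that fails as written is the reduction of $(\phi,\psi)$ to $\vartheta$.

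\eqref{shock-vu} and \eqref{theta-s} relate only the profile derivatives $u^S_\xi,v^S_\xi,\theta^S_\xi$. The paper uses them to write $\mathbf Y_1+\mathbf Y_2+\mathbf Y_3\approx-2\sigma^*\delta_S\int_0^1 w\,dy$ and to fix the coefficients of $\mathbf B_1$. They say nothing about the perturbations, which are not $O(\delta_S)$-close to multiples of one another. The link comes only from the weight. The quadratic part of $(u-\bar u)(p-\bar p)-\sigma\bar\theta\eta(U|\bar U)$ at $(v^*,\theta^*)$ is exactly
\[
-\frac{R\sigma^*\theta^*}{2(v^*)^2}\Big(\phi+\frac{\psi}{\sigma^*}\Big)^2-\frac{R\sigma^*}{2(\gamma-1)\theta^*}\big(\vartheta-c_*\psi\big)^2,\qquad c_*:=\frac{(\gamma-1)\theta^*}{v^*\sigma^*}.
\]
This gives two good terms $\mathbf G_1,\mathbf G_2$ of size $\frac{\lambda}{\delta_S}\int v^S_\xi|\cdot|^2$, as in \eqref{b2g}. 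A single barotropic-type ``$\psi+p$'' square, as you describe, is not enough. In the entropy direction $\psi=0$, $\vartheta/\theta^*=\phi/v^*$, both $p-\bar p$ and the integrand of $\mathbf Y_1+\mathbf Y_2+\mathbf Y_3$ vanish to leading order. Yet the quadratic form in $\mathbf B_1$ equals $\frac{\gamma p^*\sigma^*}{2(v^*)^2}\phi^2>0$ there, and its mean is seen neither by the Poincaré bound on $\mathbf D$ nor by the shift. Every conversion between $\phi,\psi,\vartheta$ must go through Young's inequality with parameter $(\delta_S/\lambda)^{1/2}$ against $\mathbf G_1,\mathbf G_2$. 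This applies in $\mathbf B_1$, in $\mathbf Y_2,\mathbf Y_3$, and back from $\vartheta$ to $\psi$. Your closing remark about the weight good term is the right mechanism, but the mismatches are $O(1)$ pointwise, not $O(\delta_S)$.

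The constant comparison you defer is the core of the lemma, and it does close in your variable. The bad term has size $\alpha^*\int v^S_\xi\psi^2d\xi=\alpha^*\delta_S\int_0^1\psi^2dy$, of the same order $\delta_S$ as the diffusion. Your $\frac{C}{\delta_S}\int|v^S_\xi||w-\bar w|^2$ has the wrong scaling and could not be absorbed. Using $\mathbf G_2$ and $c_*^{-2}=\frac{R\gamma}{(\gamma-1)^2\theta^*}$, you get $\mathbf B_1\le\kappa_*(1+o(1))\delta_S\int_0^1\vartheta^2dy+o(1)(\mathbf G_1+\mathbf G_2)$ with $\kappa_*:=\alpha^*\frac{R\gamma}{(\gamma-1)^2\theta^*}$. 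From \eqref{sharp-D} and \eqref{poincare}, $\mathbf D\ge2\kappa_*(1-o(1))\delta_S\int_0^1|\vartheta-\bar\vartheta|^2dy$. Hence $\mathbf B_1-\frac34\mathbf D\le-\frac12\kappa_*\delta_S\int_0^1\vartheta^2dy+\frac32\kappa_*\delta_S\bar\vartheta^2+\dots$. Moreover $\frac32\kappa_*\delta_S\bar\vartheta^2\le3\alpha^*\delta_S\bar\psi^2+C\frac{\delta_S}{\lambda}\mathbf G_2$, and this is absorbed by $-\frac{\delta_S}{2M}|\dot{\mathbf X}|^2\le-(\sigma^*)^2M\delta_S\bar\psi^2+\dots$ as soon as $(\sigma^*)^2M\ge3\alpha^*$, which is the paper's choice of $M$.

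Three of your error bounds do not close as stated:
(i) The interaction terms must be bounded by $C[\delta_S(\delta_R+\delta_C)e^{-C\delta_St}+\delta_R\delta_Ce^{-Ct}]\|(\phi,\psi,\vartheta)\|_{L^2}$. A bound $C\delta_0e^{-c\delta_0t}$ integrates to $O(1)$.
(ii) $\int_0^\infty\|Q_2^R\|_{L^1}dt$ diverges logarithmically. You need $\|\vartheta\|_{L^\infty}\le\|\vartheta\|^{1/2}\|\vartheta_\xi\|^{1/2}$, absorption into $\mathbf D$, and the integrability of $\|Q_2^R\|_{L^1}^{4/3}$.
(iii) You need $\lambda\le C\sqrt{\delta_S}$, not only $\lambda\ll1$, for instance to get $\delta_S^{-1}|\mathbf Y_6|^2\le C\varepsilon_1^2\mathcal G^S$.
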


We begin by introducing some preliminary results which will be instrumental in the proof of Lemma \ref{lem-zvh}.

\begin{lemma}[Poincar\'e-type inequality] (see \cite[Lemma 2.9]{KVJEMS21}). \label{lem-poin}
For any $f:[0,1]\to\bbr$ satisfying $\int_0^1 y(1-y)|f'|^2 dy<\infty$, 
\beq\label{poincare}
\int_0^1\Big|f-\int_0^1 f dy \Big|^2 dy\le \frac{1}{2}\int_0^1 y(1-y)|f'|^2 dy.
\eeq
\end{lemma}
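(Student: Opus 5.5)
The plan is to diagonalize the weighted Dirichlet form with Legendre polynomials. Let $L f := -\big(y(1-y)f'\big)'$ on $[0,1]$. The shifted Legendre polynomials $P_n(2y-1)$, $n\ge 0$, satisfy $L P_n(2y-1) = n(n+1)P_n(2y-1)$. They form a complete orthogonal system in $L^2(0,1)$, and $P_0\equiv 1$. The first nonzero eigenvalue is $2$, which is exactly where the constant $\frac12$ in \eqref{poincare} comes from.

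\textbf{Step 1 (smooth $f$).} First take $f\in H^1(0,1)$ and expand
\[
f=\sum_{n\ge0} c_n P_n(2y-1)\quad\text{in } L^2(0,1).
\]
The constant term is $c_0=\int_0^1 f\,dy$. Integrate by parts against each $P_n(2y-1)$; the boundary terms vanish because the weight $y(1-y)$ is zero at $y=0,1$. This shows that the bilinear form $B(f,g)=\int_0^1 y(1-y)f'g'\,dy$ is diagonal in this basis, with $B(P_n,P_n)=n(n+1)\|P_n\|_{L^2}^2$. Bessel's inequality for the form $B$, or equivalently approximating $f$ by its partial sums, whose derivatives converge in the weighted $L^2$ sense, then gives
\[
\int_0^1 y(1-y)|f'|^2\,dy \ \ge\ \sum_{n\ge1} n(n+1)c_n^2\|P_n\|^2 \ \ge\ 2\sum_{n\ge1}c_n^2\|P_n\|^2 = 2\int_0^1\Big|f-\int_0^1 f\Big|^2 dy .
\]
If one prefers to avoid completeness in the weighted space, the same inequality follows from Bessel's inequality. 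Set $a_n := B(f,P_n)/B(P_n,P_n)^{1/2}$. Integration by parts gives $a_n^2 = n(n+1)c_n^2\|P_n\|^2$, and $\sum_n a_n^2\le B(f,f)$.

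\textbf{Step 2 (general $f$).} The main obstacle is justifying the argument when $f$ only has finite weighted Dirichlet integral. In that case $f$ may be unbounded near the endpoints, for instance with logarithmic growth, so it need not lie in $H^1(0,1)$.

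To handle this, I would use dilations $f_\epsilon(y)=f(\epsilon+(1-2\epsilon)y)$. Since $y(1-y)$ is bounded below on $[\epsilon,1-\epsilon]$, $f\in H^1(\epsilon,1-\epsilon)$, and hence $f_\epsilon\in H^1(0,1)$. Step 1 therefore applies to $f_\epsilon$.

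Changing variables $s=\epsilon+(1-2\epsilon)y$ gives $y(1-y)=(s-\epsilon)(1-\epsilon-s)/(1-2\epsilon)^2$ and $f_\epsilon'(y)=(1-2\epsilon)f'(s)$. Together with the Jacobian $1/(1-2\epsilon)$ and the pointwise bound $(s-\epsilon)(1-\epsilon-s)\le s(1-s)$, this yields
\[
\int_0^1 y(1-y)|f_\epsilon'|^2dy \le \frac{1}{1-2\epsilon}\int_0^1 s(1-s)|f'|^2ds .
\]

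\textbf{Step 3 (passing to the limit).} On the left-hand side, write the variance as
\[
\int_0^1\Big|g-\int_0^1 g\Big|^2 dy=\frac12\iint_{[0,1]^2}|g(y)-g(z)|^2\,dy\,dz .
\]
Since $f_\epsilon\to f$ a.e., Fatou's lemma gives
\[
\int_0^1\Big|f-\int_0^1 f\Big|^2 dy \le \liminf_{\epsilon\to0}\int_0^1\Big|f_\epsilon-\int_0^1 f_\epsilon\Big|^2 dy .
\]
In particular the left-hand side of \eqref{poincare} is finite, which also gives $f\in L^2(0,1)$. Combining this with Steps 1 and 2 and letting $\epsilon\to0$ yields \eqref{poincare}.
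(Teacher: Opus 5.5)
The paper gives no proof of this lemma and simply quotes Kang--Vasseur. Your argument is correct and is essentially the standard argument behind that cited lemma: diagonalize $-\big(y(1-y)f'\big)'$ with shifted Legendre polynomials, whose first nonzero eigenvalue $1\cdot 2$ produces the constant $\frac12$, and use Bessel's inequality for the weighted form. Your dilation and Fatou steps are also sound; they rely on $(s-\epsilon)(1-\epsilon-s)=s(1-s)-\epsilon(1-\epsilon)\le s(1-s)$ and on the double-integral form of the variance, and they correctly remove the $H^1$ assumption provided $f'$ is read as a weak derivative, i.e.\ $f$ is locally absolutely continuous on $(0,1)$.
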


\subsection{Wave interaction estimates}
We first present useful estimates for the wave interaction terms such as $Q_i^I$ in \eqref{QI1}. Combining the {\it a priori} assumption \eqref{apri-ass} with Sobolev's inequality, we obtain
\beq\label{smp1}
\|(v-\bar v, u-\bar u, \theta-\bar \theta)\|_{L^\infty((0,T)\times\bbr)}\le C\eps_1, \quad\mbox{and so}\quad v, u, \theta \in L^\infty((0,T)\times\bbr).
\eeq
Then, the ODE \eqref{X(t)} together with Lemma \ref{lemma1.3} yields that
\beq\label{dxbound}
\begin{aligned}
	|\dot{\mb X}(t)| &\le \frac{C}{\deltas}  \|(v-\bar v, u-\bar u, \theta-\bar \theta)(t,\cdot) \|_{L^\infty(\bbr)} \int_\bbr ( v^S)^{-\mb X}_\xi d\xi \\
	&\le C \|(v-\bar v, u-\bar u, \theta-\bar \theta)(t,\cdot) \|_{L^\infty(\bbr)}.
\end{aligned}
\eeq
This estimate proves \eqref{xprop} and will be used to derive the wave interaction estimates in Lemma \ref{lemma2.2}. Compared with the existing results in the reference \cite{KVW25}, the terms in $Q_i^I$ are fewer, so we state the following lemma without proof.

\begin{lemma}\label{lemma2.2}
	Let $\mb X$ be the shift defined by \eqref{X(t)} and $Q_i^I(~i=1,2)$, $Q_{i,\xi}^I(~i=1,2)$ and $Q_{1,\xi\xi}^I$ are defined in \eqref{QI1} and \eqref{QI2}. Under the same hypotheses as in Proposition \ref{prop2}, the following holds: for $i=1,2,$ and $\forall t\le T,$ $\exists$ a  constant $C>0$ independent of $T, \delta_R, \deltas$ and $\delta_C$ such that
	\beq\label{wave-interactions}
	\begin{aligned}
		&\|(Q_i^I,Q_{i,\xi}^I,Q_{1,\xi\xi}^I)\|_{L^2(\bbr)} \le C\deltas (\deltar+\delta_C) e^{-C \deltas t}+C\deltar\delta_Ce^{-Ct},\\[3mm]
		& \| |v^S_\xi| |\big((v^R)^{\mb X} -v_*,(\theta^R)^{\mb X} -\theta_*\big) |\|_{L^2(\bbr)}+\| |v^S_\xi| |\big((v^C)^{\mb X} -v^*,(\theta^C)^{\mb X} -\theta^*\big) |\|_{L^2(\bbr)}\\
		&\leq C\deltas^{3/2} (\deltar+\delta_C) e^{-C \deltas t} .
	\end{aligned}
	\eeq
\end{lemma}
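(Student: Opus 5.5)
We will estimate the interaction terms pointwise by splitting $\mathbb R$ into regions where at most one of the three waves is non-trivial, and then integrate. The key identity is that $Q_1^I$ and $Q_2^I$ vanish whenever the composite wave locally reduces to a single wave: if, say, $v^R-v_*,\theta^R-\theta_*,v^C-v^*$, etc.\ were all zero, then $\bar p=(p^S)^{-\mb X}$ and the bracket in \eqref{QI1} is zero. We will therefore write each interaction term as a sum of products of differences, for instance
\[
\bar p - p^R-p^C-(p^S)^{-\mb X} = O(1)\Big(|v^R-v_*|+|\theta^R-\theta_*|\Big)\Big(|v^C-v_*|+|\theta^C-\theta_*|+|(v^S)^{-\mb X}-v^*|+|(\theta^S)^{-\mb X}-\theta^*|\Big)+\cdots,
\]
using the smoothness of $p=R\theta/v$ on the bounded range of states and Taylor expansion around the intermediate constants. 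We do the same for the $\xi$-derivatives. Each resulting term has a derivative of one wave multiplied by a deviation of another wave from its end state, or products of derivatives of two different waves. The $\kappa$-part of \eqref{QI2} is handled identically after expanding $\bar\theta_\xi/\bar v$.

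Next we localize. We split $\mathbb R$ at the two points $\xi_1(t)=\frac12\lambda_{1*}(1+t)-\sigma t$ (midway between the rarefaction front and the contact, in shock coordinates) and $\xi_2(t)=\frac12(\mb X(t)-\sigma t)$ (midway between the contact and the shifted shock). Because of \eqref{dxbound} and \eqref{smp1}, we have $|\dot{\mb X}|\le C\eps_1\ll\sigma$, so the shock location $\sigma t+\mb X(t)$ in the $x$-variable separates linearly from $0$ and from $\lambda_{1*}(1+t)<0$; this is where \eqref{xprop} enters. On each region we use the available decay of the two other waves. For the rarefaction we use Lemma \ref{lemma1.2}(3)--(4), which give $e^{-2|x-\lambda_{1*}(1+t)|}$ decay beyond its fan. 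For the contact we use the Gaussian bounds of Lemma \ref{Lemma 2.2.}. For the shock we use the $\delta_S e^{-C\delta_S|\xi|}$ bounds of Lemma \ref{lemma1.3}. For example, a shock-derivative times a contact deviation on $\xi>\xi_2$ is bounded by $C\delta_S^2e^{-C\delta_S|\xi-\mb X|}\cdot\delta_C e^{-\tilde C(\sigma t)^2/(1+t)}$. The same product on $\xi<\xi_2$ is bounded by $C\delta_S^2 e^{-C\delta_S t}\delta_C$. Taking $L^2$ norms then gives $\delta_S^{3/2}\delta_C e^{-C\delta_S t}$. Rarefaction--contact interactions have no small-$\delta_S$ rate and yield $C\delta_R\delta_C e^{-Ct}$, since the fan edge and the contact separate at the rate $|\lambda_{1*}|t$. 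Higher derivatives $Q^I_{i,\xi}$ and $Q^I_{1,\xi\xi}$ are treated the same way, using the derivative bounds in the three lemmas.

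The second estimate in \eqref{wave-interactions} follows from the same splitting with a single product. We use $|v^S_\xi|\le C\delta_S^2e^{-C\delta_S|\xi|}$, and the shifted rarefaction or contact deviation is exponentially small for $\xi>-c t$ while $|v^S_\xi|\le C\delta_S^2 e^{-C\delta_S t}$ for $\xi<-ct$. Taking the $L^2$ norm of $\delta_S^2e^{-C\delta_S|\xi|}$ produces $\delta_S^{3/2}$.

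The main obstacle is bookkeeping the contact wave, whose decay is only Gaussian in $x/\sqrt{1+t}$ rather than exponential in $t$. We must check that $e^{-\tilde C t^2/(1+t)}\le Ce^{-ct}$ on the relevant regions, and that the $\kappa(\cdot)_{\xi\xi}$ terms in $Q_2^I$ gain no bad powers. Since this mirrors \cite{KVW25} with fewer (no viscous) terms, we would follow that argument verbatim.
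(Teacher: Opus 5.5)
Your proposal is correct and is essentially the argument the paper relies on; the paper omits the proof and defers to Kang--Vasseur--Wang. That argument factors each interaction term into products of a derivative of one wave with another wave's deviation from the adjacent intermediate state, splits $\mathbb{R}$ at points that separate linearly in time because $|\dot{\mathbf X}|\le C\varepsilon_1$, and uses the exponential and Gaussian tails of each wave. The only slip is cosmetic: when a single wave is active, the bracket defining $Q_1^I$ equals the constant $-(p_*+p^*)$ rather than zero, so your displayed expansion should include $+p_*+p^*$ on the left; this changes nothing, since only its $\xi$-derivative is estimated.
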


\subsection{Construction of weight function}
‌To apply $a$-contraction with shifts method, we first introduce the weight function $a$. Define the weight function $a$ by
\begin{equation}\label{weight}
	a(\xi):=1+\frac{\lam}{\deltas}(v^S(\xi)-v^*),
\end{equation}
where the constant $\lam$ is chosen to be so small but far bigger than $\deltas:=|v_+-v^*|=v_+-v^*$ such that 
\beq\label{lamsmall}
\deltas\ll \lam \le C\sqrt{\deltas}.
\eeq
Thus, we have
\begin{equation}\label{a-bound}
	1<a(\xi)<1+\lam,
\end{equation}
and 
\begin{equation}\label{a-prime}
	a^\prime(\xi)=\frac{\lam}{\deltas}v^S_\xi>0,
\end{equation}
and so,
\beq\label{d-weight}
|a'|\sim\frac{\lam}{\deltas} |u^S_\xi|\sim \frac{\lam}{\deltas} |\theta^S_\xi|.
\eeq

\subsection{Relative entropy method} \label{ssec:ent}
Then, we compute the relative entropy defined by the entropy 
\[
s(V):=R\log v + \frac{R}{\gamma-1} \log\theta+const.,
\]
where $V=(v,u,E)$. Using the Gibbs relation and $E=e+\frac{u^2}{2}$, we obtain
\[
\theta ds = dE - udu + p dv,
\]
and so
\[
\nabla_V s(V) = \left( \frac{p}{\theta}, -\frac{u}{\theta}, \frac{1}{\theta} \right).
\]
Thus, for any $\bar V:=(\bar v,\bar u,\bar E)$ with $\bar E:=\bar e+\frac{\bar u^2}{2}$, $\bar e=\frac{R}{\gamma-1}\bar \theta + \mbox{const.}$, and $\bar p = \frac{R\bar\theta}{\bar v}$, and we can calculate the relative entropy
\begin{align}\label{ent}
    \begin{aligned}
	(-s)(V|\bar V) &= - s(V) + s(\bar V) + \nabla_V s (\bar V)\cdot (V-\bar V)\\
	&=  R  \left(\frac{v}{\bar v} -1- \log\frac{v}{\bar v}\right) +\frac{R}{\gamma-1} \left(\frac{\theta}{\bar\theta} -1- \log\frac{\theta}{\bar\theta}\right)  + \frac{(u-\bar u)^2}{2\bar\theta}.
    \end{aligned}
\end{align}

	To simplify the analysis of the relative entropy's evolution, we employ the non-conserved variables. 
Let $U = (v, u, \theta)^t$ and set $\bar{U} = (\bar{v}, \bar{u}, \bar{\theta})^t$ denote the superposition of a 1-rarefaction wave, a 2-viscous contact wave, and a 3-viscous shock wave shifted by $\mathbf{X}$, which is defined in \eqref{shwave}. 

As established in \eqref{ent}, for the mathematical entropy $\eta := -s$, the relative entropy between the solution $U$ and the profile $\bar{U}$ is given by
\[
\eta(U|\bar{U}) = R \left( \frac{v}{\bar{v}} - 1 - \log\frac{v}{\bar{v}} \right) + \frac{R}{\gamma-1} \left( \frac{\theta}{\bar{\theta}} - 1 - \log\frac{\theta}{\bar{\theta}} \right) + \frac{(u - \bar{u})^2}{2\bar{\theta}}.
\]
By introducing the convex function $\Phi(z) := z - 1 - \ln z$, this can be rewritten compactly as
\[
\eta(U|\bar{U}) = R \Phi\left( \frac{v}{\bar{v}} \right) + \frac{R}{\gamma-1} \Phi\left( \frac{\theta}{\bar{\theta}} \right) + \frac{(u - \bar{u})^2}{2\bar{\theta}}.
\]

Accordingly, the relative entropy weighted by $\bar{\theta}$ takes the form
\begin{equation}\label{def-rent}
	\bar{\theta} \eta(U|\bar{U}) = R \bar{\theta} \Phi\left( \frac{v}{\bar{v}} \right) + \frac{R \bar{\theta}}{\gamma-1} \Phi\left( \frac{\theta}{\bar{\theta}} \right) + \frac{(u - \bar{u})^2}{2}.
\end{equation}

In the subsequent analysis, we will study the evolution of the relative entropy weighted by the function $a(\xi - \mathbf{X})\bar{\theta}(t,\xi)$. Specifically, we consider the following integral
\begin{equation}\label{entropy-int}
    \int_{\mathbb{R}} a^{-\mathbf{X}}(\xi) \bar{\theta}(t,\xi) \eta\big(U(t,\xi)|\bar{U}(t,\xi)\big)  d\xi.
\end{equation}

‌For the subsequent estimates, we can decompose the relative entropy \eqref{entropy-int} into several parts by the following lemma.

\begin{lemma}\label{lem-relative}
	Let $a$ be the weight function defined by \eqref{weight}. Let $U$ be a solution to \eqref{NS-1}, and $\bar U$ the shifted wave given by \eqref{shwave}.
	Then,
	\begin{align*}
			\frac{d}{dt}\int_{\bbr} a^{-\mb X}(\xi)\bar \theta(t,\xi) \eta\big(U(t,\xi)|\bar U(t,\xi) \big) d\xi =\dot{\mb X} (t) \mb{Y}(U) +\mathcal{J}^{bad}(U) - \mathcal{J}^{good}(U),
	\end{align*}
    where
	\begin{align*}
			&\mb Y(U):=-\int_{\bbr} \! a_\xi^{-\mb X} \bar \theta\eta(U|\bar U ) d\xi +\int_\bbr a^{-\mb X}\Big[-R (\theta^S)^{-\mb{X}}_\xi\Phi\left(\frac{v}{\bar v}\right) - \frac{R }{\gamma-1}(\theta^S)^{-\mb{X}}_\xi\Phi\left(\frac{\theta}{\bar \theta}\right) \Big]d\xi\\
			&\quad\qquad\quad+\int_\bbr a^{-\mb X} \Big[(u^S)^{-\mb{X}}_\xi(u-\bar u)+\frac{(v^S)^{-\mb{X}}_\xi \bar p}{\bar v}(v-\bar v)+\frac{R}{\gamma-1}\frac{(\theta^S)^{-\mb{X}}_\xi}{\bar\theta}(\theta-\bar\theta)\Big]d\xi,
	\end{align*}
	\begin{align*}
			\mathcal{J}^{bad}(U)&:= \int_\bbr a_\xi^{-\mb X} (u-\bar u)(p-\bar p) d\xi \\
			&\quad+ \int_\bbr \!a^{-\mb X} \Big[R\Big((\theta_t^R-\sigma \theta_\x^R)+(\theta_t^C-\sigma \theta_\x^C)-\sigma (\theta^S)^{-\mb X}_\xi \Big)\Phi(\frac{v}{\bar v})-\frac{\bar p\bar u_\xi}{v\bar v}(v-\bar v)^2 \Big]d\x\\
			& \quad + \int_\bbr \!a^{-\mb X} \Big[\frac{R}{\gamma-1}\Big((\theta_t^R-\sigma \theta_\x^R)+(\theta_t^C-\sigma \theta_\x^C)-\sigma (\theta^S)^{-\mb X}_\xi \Big)\Phi\left(\frac{\theta}{\bar \theta}\right) \\
            &\quad\qquad - \frac{\bar u_\xi}{\theta}(\theta-\bar \theta)(p-\bar p)+\bar p \bar u_\xi \frac{(\theta-\bar\theta)^2}{\theta\bar\theta} \Big]d\x + \int_\bbr \!a^{-\mb X} \Big[  \kappa \frac{\theta-\bar\theta}{\theta} \Big(\frac{\theta_\xi}{v}- \frac{\bar \theta_\xi}{\bar v}  \Big) \Big]_\xi d\xi \\
			&\quad +\int_\bbr \!a^{-\mb X}  \left\{\kappa \frac{\theta-\bar\theta}{\theta^2}\theta_\x(\frac{\theta_\x}{v}-\frac{\bar\theta_\x}{\bar v})-\kappa\frac{(\theta-\bar\theta)_\xi}{\theta}\bar\theta_\x(\frac1v-\frac 1{\bar v})\right.\\
            &\quad\qquad \left.-\frac{(\theta-\bar\theta)^{2}}{\theta\bar\theta}\Big[\kappa(\frac{\bar\theta_\xi}{\bar v})_\xi+Q_2\Big] \right\}d\xi-\int_\bbr \!a^{-\mb X}\Big[Q_1(u-\bar u)+Q_2(1-\frac{\bar\theta}{\theta})\Big] d\xi,
	\end{align*}
	and
	\begin{align*}
			&\mathcal{J}^{good}(U):= \s\int_{\bbr} \! a_\xi^{-\mb X} \bar \theta\eta(U|\bar U ) d\xi  +\int_\bbr \!a^{-\mb X}\Big[\frac{\kappa}{v\theta}|(\theta-\bar \theta)_\xi|^2\Big]d\x.
	\end{align*}
\end{lemma}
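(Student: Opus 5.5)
This identity is a direct computation. I would differentiate \eqref{entropy-int} under the integral and sort the contributions. Write $\Psi := \bar\theta\,\eta(U|\bar U) = R\bar\theta\Phi(v/\bar v) + \frac{R\bar\theta}{\gamma-1}\Phi(\theta/\bar\theta) + \frac12(u-\bar u)^2$. Then
\[
\frac{d}{dt}\int_\bbr a^{-\mb X}\Psi\,d\xi = -\dot{\mb X}\int_\bbr a^{-\mb X}_\xi\Psi\,d\xi + \int_\bbr a^{-\mb X}\,\partial_t\Psi\,d\xi .
\]
The first term is exactly the first piece of $\mb Y(U)$. To compute $\partial_t\Psi$ I use the system \eqref{NS-1} for $U$ and \eqref{bar-system} for $\bar U$. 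In each equation I rewrite $\partial_t = (\partial_t-\sigma\partial_\xi) + \sigma\partial_\xi$, so that the profile time derivatives produce the $\dot{\mb X}(v^S)^{-\mb X}_\xi$-type terms, the transport terms, and the sources $Q_1,Q_2$.

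The derivatives of the three pieces of $\Psi$ are as follows.
\begin{itemize}
\item From $\partial_t\bar\theta$ acting on $R\bar\theta\Phi(v/\bar v)$ and on $\frac{R\bar\theta}{\gamma-1}\Phi(\theta/\bar\theta)$, I get the factors $R\Phi(v/\bar v)$ and $\frac{R}{\gamma-1}\Phi(\theta/\bar\theta)$ multiplied by $\bar\theta_t$. Here I do not substitute the $\bar\theta$ equation. Instead I expand $\bar\theta_t = (\theta^R_t-\sigma\theta^R_\xi) + (\theta^C_t-\sigma\theta^C_\xi) - \sigma(\theta^S)^{-\mb X}_\xi - \dot{\mb X}(\theta^S)^{-\mb X}_\xi + \sigma\bar\theta_\xi$. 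This yields the $\Phi$-terms in $\mathcal J^{bad}$, the second piece of $\mb Y$, and a residual $\sigma\bar\theta_\xi(\cdots)$ that is kept for the flux.
\item The term $\bar\theta\Phi'(v/\bar v)\,\partial_t(v/\bar v)$ uses $v_t$ and $\bar v_t$.
\item The energy term uses $\theta_t$ and $\bar\theta_t$, and the kinetic term uses $u_t-\bar u_t$.
\end{itemize}

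Collecting everything, all $\sigma\partial_\xi$ parts combine into $\sigma\partial_\xi\Psi$. Integrating by parts against $a^{-\mb X}$ turns this into $-\sigma\int a^{-\mb X}_\xi\Psi$, which is the first good term. The $\dot{\mb X}$ parts with profile derivatives give the last line of $\mb Y$: the terms $(u^S)_\xi(u-\bar u)$, $\frac{\bar p}{\bar v}(v^S)_\xi(v-\bar v)$ and $\frac{R}{\gamma-1}\frac{(\theta^S)_\xi}{\bar\theta}(\theta-\bar\theta)$. For the last of these, the needed identity is $\bar\theta\Phi'(\theta/\bar\theta)\cdot\theta/\bar\theta^2 = (\theta-\bar\theta)/\bar\theta$, checked with the $\Phi$-terms carefully; this is routine algebra with $\Phi'(z)=1-1/z$.

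The hyperbolic fluxes $-(u-\bar u)(p-\bar p)_\xi$ and $\bar\theta(1-\bar v/v)\frac{(v-\bar v)_\xi}{\bar v}$-type terms combine with the $pu_\xi$ terms from the energy equations. After integrating by parts, the $\xi$-derivative falls on $a^{-\mb X}$ and produces $\int a^{-\mb X}_\xi(u-\bar u)(p-\bar p)$. The remaining quadratic terms are $-\frac{\bar p\bar u_\xi}{v\bar v}(v-\bar v)^2$, $-\frac{\bar u_\xi}{\theta}(\theta-\bar\theta)(p-\bar p)$ and $\bar p\bar u_\xi\frac{(\theta-\bar\theta)^2}{\theta\bar\theta}$. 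These come from writing $pu_\xi - \frac{\theta}{\bar\theta}\bar p\bar u_\xi$ through $p=R\theta/v$ and grouping with the $R\Phi$ contributions; the exact cancellation of the linear parts relies on $\bar p = R\bar\theta/\bar v$.

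For the heat conduction term I write the contribution as $\frac{\theta-\bar\theta}{\theta}\big[\kappa(\theta_\xi/v)_\xi - \kappa(\bar\theta_\xi/\bar v)_\xi\big] - \frac{(\theta-\bar\theta)^2}{\theta\bar\theta}\kappa(\bar\theta_\xi/\bar v)_\xi$. The first bracket is rewritten as a total derivative $\big[\kappa\frac{\theta-\bar\theta}{\theta}\big(\frac{\theta_\xi}{v}-\frac{\bar\theta_\xi}{\bar v}\big)\big]_\xi$ minus $\big(\frac{\theta-\bar\theta}{\theta}\big)_\xi\kappa\big(\frac{\theta_\xi}{v}-\frac{\bar\theta_\xi}{\bar v}\big)$. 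Expanding the last expression gives $\frac{\kappa}{v\theta}|(\theta-\bar\theta)_\xi|^2$, the good dissipation, together with the two stated cross terms: the one with $\frac{\theta-\bar\theta}{\theta^2}\theta_\xi$ and the one with $\bar\theta_\xi(1/v-1/\bar v)$. The $Q_1, Q_2$ sources appear as stated.

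The main obstacle is bookkeeping. The key points are keeping the $\bar\theta_t$ factors unsubstituted so that they match the $\Phi$-terms, and confirming that every linear-in-perturbation term cancels except those collected in $\mb Y$ and the $Q$ terms.
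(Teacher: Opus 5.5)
Your proposal is correct and is essentially the paper's proof: write $\partial_t\Psi=\sigma\partial_\xi\Psi+\cdots$ using \eqref{NS-1} and \eqref{bar-system}, keep $\bar\theta_t-\sigma\bar\theta_\xi$ unexpanded in front of the $\Phi$-terms, let the $(u-\bar u)_\xi$ terms cancel via $p=R\theta/v$ and $\bar p=R\bar\theta/\bar v$, and split the heat term exactly as you describe. One small correction: the coefficient $R\bar\theta(1/\bar v-1/v)$ multiplies $(u-\bar u)_\xi$, coming from $v_t-\sigma v_\xi=u_\xi$, not $(v-\bar v)_\xi$; this term, together with $\frac{R}{v}(\bar\theta-\theta)(u-\bar u)_\xi$ and $(p-\bar p)(u-\bar u)_\xi$, cancels, leaving only the flux $-[(u-\bar u)(p-\bar p)]_\xi$, which gives $\int_{\mathbb R} a^{-\mathbf X}_\xi(u-\bar u)(p-\bar p)\,d\xi$.
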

\begin{remark}
	Since $\s a'(\xi) >0$, $\mathcal{J}^{good}$ consists of nonnegative terms. 
\end{remark}

\begin{proof}
	First, from \eqref{def-rent} we have
	\begin{align}
		\begin{aligned}\label{fieq}
			&\frac{d}{dt}\int_{\bbr} a^{-\mb X}(\xi)\bar \theta(t,\xi) \eta\big(U(t,\xi)|\bar U(t,\xi) \big) d\xi \\
			&= -\dot{\mb X} (t)  \int_{\bbr} \! a_\xi^{-\mb X} \bar \theta\eta(U|\bar U ) d\xi + \int_{\bbr} \! a^{-\mb X} \partial_t\bigg[R\bar\theta\Phi\left(\frac{v}{\bar v}\right)+\frac{R\bar\theta}{\gamma-1}\Phi\left(\frac{\theta}{\bar \theta}\right) + \frac{(u-\bar u)^2}{2}\bigg] d\xi.
		\end{aligned} 
	\end{align}
	Then,  using the two systems \eqref{NS-1} and \eqref{bar-system} (satisfied by $\bar U$), we get the perturbation equations
	\begin{equation}\label{pereq1}
\begin{cases}
\begin{aligned}
&(v-\bar v)_t-\s (v-\bar v)_\xi-\dot{\mb X}(t)(v^S)^{-\mb X}_\xi
-(u-\bar u)_\xi=0,\\[3mm]
&(u-\bar u)_t-\s (u-\bar u)_\xi-\dot{\mb X}(t)(u^S)^{-\mb X}_\xi
+(p-\bar p)_\xi=-Q_1,\\[3mm]
&\frac{R}{\gamma-1}(\theta-\bar\theta)_t
-\frac{R\s}{\gamma-1}(\theta-\bar\theta)_\xi
-\frac{R}{\gamma-1}\dot{\mb X}(t)(\theta^S)^{-\mb X}_\xi
+(pu_\xi-\bar p\bar u_\xi)\\
&\qquad\qquad
=\kappa\left(\frac{\theta_\xi}{v}
-\frac{\bar\theta_\xi}{\bar v}\right)_\xi-Q_2.
\end{aligned}
\end{cases}
\end{equation}
	Using $\Phi'(z)=1-1/z$ and $\eqref{pereq1}_1$ and $\eqref{bar-system}_1$, we have
	\begin{align*}
			\partial_t \bigg[\bar\theta\Phi\left(\frac{v}{\bar v}\right)\bigg] &= \bar\theta_t \Phi\left(\frac{v}{\bar v}\right) + \bar\theta \Phi' \left(\frac{v}{\bar v}\right) \left(\frac{v}{\bar v}\right)_t \\
			&=  \bar\theta_t \Phi\left(\frac{v}{\bar v}\right) + \bar\theta\left(\frac{1}{\bar v}-\frac{1}{v} \right) \bigg[ (v-\bar v)_t +\bar v_t \left(1- \frac{v}{\bar v}\right)  \bigg]  \\
			&=  \bar\theta_t \Phi\left(\frac{v}{\bar v}\right) + \bar\theta\left(\frac{1}{\bar v}-\frac{1}{v} \right) \bigg[ \s (v-\bar v)_\xi +\dot{\mb X}(t)(v^S)^{-\mb X}_\xi +(u-\bar u)_\x  \bigg] \\
			&\quad +  \bar\theta\left(\frac{1}{\bar v}-\frac{1}{v} \right)  \left(1- \frac{v}{\bar v}\right) \bigg[ \s \bar v_\xi -\dot{\mb X}(t)(v^S)^{-\mb X}_\xi +\bar u_\x  \bigg]. 
	\end{align*}
	In addition, since
	\[
	\partial_\x \bigg[\s\bar\theta\Phi\left(\frac{v}{\bar v}\right)\bigg] = \s\bar\theta_\x \Phi\left(\frac{v}{\bar v}\right)  +\s \bar\theta\left(\frac{1}{\bar v}-\frac{1}{v} \right) \bigg[  (v-\bar v)_\xi  +\bar v_\x \left(1- \frac{v}{\bar v}\right)  \bigg] ,
	\]
	we have
	\begin{align}
		\begin{aligned} \label{tvest}
			&\partial_t \bigg[R\bar\theta\Phi\left(\frac{v}{\bar v}\right)\bigg] -\partial_\x \bigg[R\s\bar\theta\Phi\left(\frac{v}{\bar v}\right)\bigg]  \\
			&= R( \bar\theta_t -\s\bar\theta_\x) \Phi\left(\frac{v}{\bar v}\right) + \dot{\mb X}(t)(v^S)^{-\mb X}_\xi \frac{\bar p}{\bar v} (v-\bar v)  -\frac{\bar p\bar u_\xi}{v\bar v}(v-\bar v)^2 + R\bar\theta \left(\frac{1}{\bar v} -\frac{1}{v} \right) (u-\bar u)_\x .
		\end{aligned} 
	\end{align}
	Likewise, using $\eqref{pereq1}_3$ and $\eqref{bar-system}_3$, we can obtain
	\begin{align}
		\begin{aligned} \label{tuest} 
			&\partial_t \bigg[\frac{R\bar\theta}{\gamma-1}\Phi\left(\frac{\theta}{\bar \theta}\right)  \bigg] -\partial_\x \bigg[\frac{R\s\bar\theta}{\gamma-1}\Phi\left(\frac{\theta}{\bar \theta}\right)  \bigg]  \\
			&= \frac{R}{\gamma-1}( \bar\theta_t -\s\bar\theta_\x)\Phi\left(\frac{\theta}{\bar \theta}\right)  + \dot{\mb X}(t)\frac{R}{\gamma-1}\frac{(\theta^S)^{-\mb{X}}_\xi}{\bar\theta}(\theta-\bar\theta) \\
			&\quad + \frac{R}{v} (\bar\theta - \theta) (u-\bar u)_\x -\frac{\bar u_\x}{\theta}(\theta-\bar\theta) (p-\bar p) + \bar p \bar u_\xi \frac{(\theta-\bar\theta)^2}{\theta\bar\theta}   \\
			&\quad +\Big[\kappa \frac{\theta-\bar\theta}{\theta} \Big(\frac{\theta_\xi}{v}- \frac{\bar \theta_\xi}{\bar v}  \Big) \Big]_\xi -\frac{\kappa}{v\theta}|(\theta-\bar \theta)_\xi|^2 +\kappa \frac{\theta-\bar\theta}{\theta^2}\theta_\x(\frac{\theta_\x}{v}-\frac{\bar\theta_\x}{\bar v}) \\
			&\quad -\kappa\frac{(\theta-\bar\theta)_\xi}{\theta}\bar\theta_\x(\frac1v-\frac 1{\bar v}) -Q_2(1-\frac{\bar\theta}{\theta}) -\frac{(\theta-\bar\theta)^{2}}{\theta\bar\theta}\Big[\kappa(\frac{\bar\theta_\xi}{\bar v})_\xi+Q_2\Big].
		\end{aligned} 
	\end{align}
	Using  $\eqref{pereq1}_2$, we have
	\begin{align}
		\begin{aligned}  \label{ttest}
			&\partial_t \bigg[\frac{(u-\bar u)^2}{2}\bigg] - \partial_\x \bigg[ \s \frac{(u-\bar u)^2}{2}\bigg] \\
			&= \dot{\mb X}(t)(u^S)^{-\mb X}_\xi (u-\bar u) -\Big[ (p-\bar p) (u-\bar u)\Big]_\x + (p-\bar p) (u-\bar u)_\x  -Q_1 (u-\bar u).
		\end{aligned} 
	\end{align}
	Therefore, combining \eqref{tvest}, \eqref{tuest} and \eqref{ttest}, we have
	\begin{align*}
			&\partial_t\bigg[R\bar\theta\Phi\left(\frac{v}{\bar v}\right)+\frac{R\bar\theta}{\gamma-1}\Phi\left(\frac{\theta}{\bar \theta}\right) + \frac{(u-\bar u)^2}{2}\bigg]  \\
			& = \s  \partial_\x \bigg[ \bar\theta \eta (U |\bar U)\bigg] + ( \bar\theta_t -\s\bar\theta_\x) \bigg[ R\Phi\left(\frac{v}{\bar v}\right) +\frac{R}{\gamma-1}\Phi\left(\frac{\theta}{\bar \theta}\right) \bigg] \\
			&\quad + \dot{\mb X}(t) \Big[\frac{(v^S)^{-\mb{X}}_\xi \bar p}{\bar v}(v-\bar v)+\frac{R}{\gamma-1}\frac{(\theta^S)^{-\mb{X}}_\xi}{\bar\theta}(\theta-\bar\theta)+(u^S)^{-\mb{X}}_\xi(u-\bar u)\Big] \\
			&\quad -\frac{\bar p\bar u_\xi}{v\bar v}(v-\bar v)^2 -\frac{\bar u_\x}{\theta}(\theta-\bar\theta) (p-\bar p) +\partial_\x \Big[  \kappa \frac{\theta-\bar\theta}{\theta} \Big(\frac{\theta_\xi}{v}- \frac{\bar \theta_\xi}{\bar v}  \Big) \Big] \\
			&\quad +\kappa \frac{\theta-\bar\theta}{\theta^2}\theta_\x(\frac{\theta_\x}{v}-\frac{\bar\theta_\x}{\bar v})-\kappa\frac{(\theta-\bar\theta)_\xi}{\theta}\bar\theta_\x(\frac1v-\frac 1{\bar v}) -Q_1(u-\bar u)-Q_2(1-\frac{\bar\theta}{\theta}) \\
            &\quad -\frac{\kappa}{v\theta}|(\theta-\bar \theta)_\xi|^2 -\frac{(\theta-\bar\theta)^{2}}{\theta\bar\theta}\Big[\kappa(\frac{\bar\theta_\xi}{\bar v})_\xi+Q_2\Big].
	\end{align*}
	In addition, since
	\[
	\bar\theta_t -\s\bar\theta_\x = (\theta_t^R-\sigma \theta_\x^R)+(\theta_t^C-\sigma \theta_\x^C)-\sigma (\theta^S)^{-\mb X}_\xi - \dot{\mb X}(t) (\theta^S)^{-\mb X}_\xi,
	\]
	we substitute the above relations into \eqref{fieq} to get the desired representation.
	
\end{proof}

\subsection{Decompositions}
In order to prove \eqref{esthv}, Lemma \ref{lem-relative} allows us to decompose the terms $\mathcal{J}^{good}$ and $\mathcal{J}^{bad}$ as follows:
\begin{align}
	\begin{aligned}\label{eq-1}
		&\frac{d}{dt}\int_{\bbr} a^{-\mb X}(\xi)\bar \theta(t,\xi) \eta\big(U(t,\xi)|\bar U(t,\xi) \big) d\xi \\
		&\qquad = \dot{\mb X} (t) \mb{Y}(U) +\sum_{i=1}^6 \mb B_i(U) +\mb S_1(U)+\mb S_2(U)  -\mb G(U)  -\mb D(U),
	\end{aligned}
\end{align}
where
\begin{align*}
		&\mb B_1(U):=  \int_\bbr a^{-\mb X} (v^S)_\xi^{-\mb X} \Big[ \frac{(\gamma+1)p^*\sigma^*}{2(v^*)^2} (v-\bar v)^2+\frac{R\sigma^*}{2v^*\theta^*}(\theta-\bar\theta)^2-\frac{p^*\sigma^*}{v^*\theta^*}(v-\bar v)(\theta-\bar\theta)\Big]d\x,\\
		&\mb B_2(U):=  \int_\bbr a_\xi^{-\mb X} (u-\bar u)(p-\bar p) d\xi, \\
		&\mb B_3(U):= - \int_\bbr \!a_\xi^{-\mb X} \Big[  \kappa \frac{\theta-\bar\theta}{\theta} \Big(\frac{\theta_\xi}{v}- \frac{\bar \theta_\xi}{\bar v}  \Big) \Big] d\xi, \\
		&\mb B_4(U):= \int_\bbr \!a^{-\mb X}  \Big\{ \kappa \frac{\theta-\bar\theta}{\theta^2}\theta_\x(\frac{\theta_\x}{v}-\frac{\bar\theta_\x}{\bar v})-\kappa\frac{(\theta-\bar\theta)_\xi}{\theta}\bar\theta_\x(\frac1v-\frac 1{\bar v})-\kappa\frac{(\theta-\bar\theta)^{2}}{\theta\bar\theta}(\frac{\bar\theta_\xi}{\bar v})_\xi\Big\} d\xi,\\
		&\mb B_5(U) := \int_\bbr \!a^{-\mb X} \Big\{\frac{R}{\gamma-1}\Big((\theta_t^R-\sigma \theta_\x^R)+(\theta_t^C-\sigma \theta_\x^C)-\sigma (\theta^S)^{-\mb X}_\xi \Big)\Phi\left(\frac{\theta}{\bar \theta}\right) - \frac{\bar u_\xi}{\theta}(\theta-\bar \theta)(p-\bar p) \\
        &\qquad\qquad+\bar p \bar u_\xi \frac{(\theta-\bar\theta)^2}{\theta\bar\theta} - (v^S)_\xi^{-\mb X} \Big[\frac{R\sigma^*}{2v^*\theta^*}(\theta-\bar\theta)^2-\frac{p^*\sigma^*}{v^*\theta^*}(v-\bar v)(\theta-\bar\theta)\Big]\Big\}d\x,\\
        &\mb B_6(U):=\int_\bbr \!a^{-\mb X} \Big[R\Big((\theta_t^R-\sigma \theta_\x^R)+(\theta_t^C-\sigma \theta_\x^C)-\sigma (\theta^S)^{-\mb X}_\xi \Big)\Phi\left(\frac{v}{\bar v}\right)-\frac{\bar p\bar u_\xi}{v\bar v}(v-\bar v)^2\\
        &\qquad\qquad-(v^S)_\xi^{-\mb X} \frac{(\gamma+1)p^*\sigma^*}{2(v^*)^2} (v-\bar v)^2\Big]d\x,\\
		&\mb S_1(U):= - \int_\bbr \!a^{-\mb X} Q_1(u-\bar u) d\xi, \quad\qquad  \mb S_2(U):= - \int_\bbr \!a^{-\mb X} Q_2\Big[(1-\frac{\bar\theta}{\theta}) +\frac{(\theta-\bar\theta)^{2}}{\theta\bar\theta}\Big]d\xi, 
\end{align*}
and 
\begin{align*}
		& \mb G(U) := \s\int_{\bbr} \! a_\xi^{-\mb X} \bar \theta\eta(U|\bar U ) d\xi , \\
		&\mb D(U) := \int_\bbr \!a^{-\mb X}\Big[ \frac{\kappa}{v\theta}|(\theta-\bar \theta)_\xi|^2\Big]d\x.
\end{align*}

We decompose the functional $\mb Y$ as follows:
\[
\mb Y:= \sum_{i=1}^6\mb{Y}_i,
\]
where
\begin{align*}
		&\mb Y_1(U):= \int_\bbr a^{-\mb X} (u^S)^{-\mb{X}}_\xi(u-\bar u) d\xi,\\
		&\mb Y_2(U):= \int_{\bbr} a^{-\mb X}\frac{(v^S)^{-\mb{X}}_\xi \bar p}{\bar v}(v-\bar v) d\xi,\\
		&\mb Y_3(U):=\int_{\bbr}a^{-\mb X} \frac{R}{\gamma-1}\frac{(\theta^S)^{-\mb{X}}_\xi}{\bar\theta}(\theta-\bar\theta) d\xi,\\
		&\mb Y_4(U):= - \int_\bbr a^{-\mb X} R (\theta^S)^{-\mb{X}}_\xi\Phi(\frac{v}{\bar v}) d\xi,\\
		&\mb Y_5(U):= -\int_\bbr a^{-\mb X} \frac{R }{\gamma-1}(\theta^S)^{-\mb{X}}_\xi\Phi(\frac{ \theta}{\bar \theta}) d\xi , \\
		&\mb Y_6(U):= -\int_{\bbr} \! a_\xi^{-\mb X} \bar \theta\eta(U|\bar U ) d\xi .
\end{align*}

Notice from \eqref{X(t)} that 
\beq\label{defxy}
\dot{\mb{X}}(t)=-\frac{M}{\delta_S} (\mb{Y}_1+\mb{Y}_2+\mb{Y}_3),
\eeq
and so,
\begin{equation}\label{XY}
	\dot{\mb{X}}(t)\mb{Y}= -\frac{\deltas}{M} |\dot{\mb X} (t) |^2 +\dot{\mb{X}}(t)\sum_{i=4}^6\mb{Y}_i.
\end{equation}

\subsection{Leading order estimates}
First of all, it follows from \eqref{eq-1} and \eqref{XY} to have
\begin{align*}
	\begin{aligned}
		\frac{d}{dt}\int_{\bbr} a^{-\mb X}\bar \theta \eta\big(U|\bar U \big) d\xi  
		&= -\frac{\deltas}{2M} |\dot{\mb X}|^2 +\mb B_1+\mb B_2 - \mb{G} -\frac{3}{4}\mb D \\
		&\quad -\frac{\deltas}{2M} |\dot{\mb X}|^2 +\dot{\mb{X}}\sum_{i=4}^6\mb{Y}_i  +\sum_{i=3}^6 \mb B_i +\mb S_1+\mb S_2   -\frac{1}{4} \mb D.
	\end{aligned}
\end{align*}
Then, using Young's inequality, there exists a constant $C>0$ such that
\begin{align}
	\begin{aligned}\label{ineq-2}
		\frac{d}{dt}\int_{\bbr} a^{-\mb X}\bar \theta \eta\big(U|\bar U \big) d\xi  
		&\leq -\frac{\deltas}{2M} |\dot{\mb X}|^2 +\mb B_1+\mb B_2 - \mb{G} -\frac{3}{4}\mb D \\
		&\quad -\frac{\deltas}{4M} |\dot{\mb X}|^2 +\frac{C}{\deltas}\sum_{i=4}^6 |\mb{Y}_i|^2  +\sum_{i=3}^6 \mb B_i +\mb S_1+\mb S_2   -\frac{1}{4} \mb D.
	\end{aligned}
\end{align}

Therefore, in order to prove Lemma \ref{lem-zvh}, we need to estimate the terms in \eqref{ineq-2} and we first give the leading order estimates in the following lemma.
\begin{lemma}\label{lem-sharp1}
	There exists $C>0$ such that 
	\begin{align}\label{eq-sharp1}
		\begin{aligned}
			&-\frac{\deltas}{2M} |\dot{\mb X}|^2 +\mb B_1+\mb B_2 - \mb{G} -\frac{3}{4}\mb D \\&\le -C\mathcal{G}^S(U) +C (\eps_1+\delta_0)\| (u-\bar u)_\x\|^2_{L^2(\bbr)} + C ( \deltar^{4/3}+ \delta_C^{4/3})  \deltas^{4/3} e^{-C\deltas t},
		\end{aligned}
	\end{align}
\end{lemma}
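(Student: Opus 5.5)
The plan is to follow the $a$-contraction scheme of Kang--Vasseur--Wang, with the missing velocity dissipation compensated by \eqref{sharp-D} and by the fact that $\mb G$ and $\mb B_2$ only involve the shock region.

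\textbf{Step 1: absorb $\mb B_2$ into $\mb G$.} On the support of $a_\xi^{-\mb X}=\frac{\lam}{\deltas}(v^S)^{-\mb X}_\xi$, I expand $\bar\theta\eta(U|\bar U)$ to second order:
\[
\bar\theta\eta(U|\bar U)=\frac{p^*}{2v^*}(v-\bar v)^2+\frac{R}{2(\gamma-1)\theta^*}(\theta-\bar\theta)^2+\frac{(u-\bar u)^2}{2}+O(\eps_1+\delta_0)|U-\bar U|^2 .
\]
I then complete the square in $u-\bar u$:
\[
\frac{\s}{2}(u-\bar u)^2-(u-\bar u)(p-\bar p)=\frac{\s}{2}\Big(u-\bar u-\frac{p-\bar p}{\s}\Big)^2-\frac{(p-\bar p)^2}{2\s}.
\]
Linearizing $p-\bar p\approx -\frac{p^*}{v^*}(v-\bar v)+\frac{R}{v^*}(\theta-\bar\theta)$ and using $\s\approx\s^*$, $(\s^*)^2=\gamma p^*/v^*$, I expect
\[
\mb G-\mb B_2\ \ge\ c\frac{\lam}{\deltas}\int (v^S)^{-\mb X}_\xi\Big[\Big|u-\bar u-\tfrac{p-\bar p}{\s}\Big|^2+|\mathcal W|^2\Big]d\xi-\text{(small)} .
\]
Here $\mathcal W$ is a nondegenerate combination of $v-\bar v$ and $\theta-\bar\theta$. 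This has to be checked by an explicit $2\times2$ computation; the possible degenerate direction is the one along which $\mb B_1$ is not small. The resulting term carries the large factor $\lam/\deltas\gg1$ relative to $\mb B_1$, which is of order $\int v^S_\xi|(v-\bar v,\theta-\bar\theta)|^2$. Hence every direction transversal to the degenerate one is absorbed, and control of $u-\bar u$ in $\mathcal G^S$ follows from the squared $u$-combination.

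\textbf{Step 2: reduce to a one-variable problem on $(0,1)$.} I change variables $y:=(v^S(\xi-\mb X)-v^*)/\deltas\in(0,1)$, so that $dy=\deltas^{-1}(v^S)^{-\mb X}_\xi d\xi$. The remaining part of $\mb B_1$ lies along the degenerate direction, which by \eqref{theta-s} and \eqref{VS2} is essentially $\theta-\bar\theta$ (up to $O(\deltas)$). I set $w:=(\theta-\bar\theta)(y)$.

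For the dissipation I use \eqref{sharp-D}:
\[
\frac{\kappa}{v}|\partial_\xi w|^2=\frac{\kappa}{v}|\partial_y w|^2\frac{(v^S_\xi)^2}{\deltas^2}\ \ge\ (1-C\delta_0)\,\alpha^*\frac{R\gamma}{(\gamma-1)^2}\,y(1-y)\,|\partial_y w|^2\,(v^S)^{-\mb X}_\xi .
\]
This gives $\mb D\gtrsim \deltas\,\alpha^*\frac{R\gamma}{(\gamma-1)^2}\int_0^1 y(1-y)|\partial_yw|^2dy$. Lemma \ref{lem-poin} then controls $\deltas\int_0^1|w-\bar w|^2dy$ by $\mb D$, where $\bar w:=\int_0^1 w\,dy$.

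For the mean, \eqref{defxy} together with \eqref{shock-vu} and \eqref{theta-s} shows that $\mb Y_1+\mb Y_2+\mb Y_3\approx c\,\deltas\bar w$ plus terms in $u-\bar u-(p-\bar p)/\s$ and $\mathcal W$, which are already controlled in Step 1. Therefore $-\frac{\deltas}{2M}|\dot{\mb X}|^2\le -\frac{M c^2}{2}\deltas\,\bar w^2+\dots$.

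The constant $M=\frac{3\gamma(\gamma+1)p^*}{2(v^*)^2(\s^*)^3}$ should be exactly what makes
\[
-\frac{Mc^2}{2}\bar w^2+\text{(coefficient of }\mb B_1)\int_0^1 w^2-\frac34\,\text{Poincar\'e part}
\]
negative definite. I would verify this by writing $\int_0^1 w^2=\bar w^2+\int_0^1|w-\bar w|^2$ and comparing coefficients, using $\alpha^*=\frac{\gamma(\gamma+1)p^*}{2(v^*)^2\s^*}$, which matches the $v$-coefficient of $\mb B_1$.

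\textbf{Step 3: error terms.} Cubic terms from the Taylor expansions, together with the $O(\deltas)$ discrepancies in \eqref{sharp-D}, \eqref{theta-s} and \eqref{sm1}, are bounded by $C(\eps_1+\delta_0)\mathcal G^S$. Terms in which a $\xi$-derivative falls on the perturbation while passing between $v$, $u$ and $\theta$ are handled with the $L^\infty$ bound \eqref{smp1} and Young's inequality. These produce the $(\eps_1+\delta_0)\|(u-\bar u)_\xi\|^2$ term, since $v-\bar v$ is related to $u-\bar u$ only through the hyperbolic equations. Replacing $\bar U$ by $(v^S,u^S,\theta^S)^{-\mb X}$ in the coefficients on the shock support leaves rarefaction and contact tails. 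These are estimated by the second line of \eqref{wave-interactions} and Young's inequality with exponents $3/2$ and $3$, giving $(\deltar^{4/3}+\delta_C^{4/3})\deltas^{4/3}e^{-C\deltas t}$.

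\textbf{Main obstacle.} The hard step is the sharp coefficient comparison in Step 2. Dissipation is available only in $\theta$, so the entire leading bad part of $\mb B_1$ must be rewritten, modulo the large $\lam/\deltas$ good term of Step 1, as a quadratic form in the temperature perturbation alone. The Poincar\'e constant $1/2$, combined with $\alpha^*$ from \eqref{sharp-D} and the choice of $M$, must then close exactly. If the degenerate direction of Step 1 is not precisely $\theta-\bar\theta$, I would first try projecting $(v-\bar v,\theta-\bar\theta)$ onto that direction and using the $\lam/\deltas$ term to absorb the orthogonal part.
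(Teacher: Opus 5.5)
Your architecture is the paper's: transversal good terms from $\mb G-\mb B_2$, the mean of the degenerate component controlled by $\dot{\mb X}$, its oscillation controlled by \eqref{sharp-D} and Lemma~\ref{lem-poin}, and $M$ tuned to close. Normalizing the degenerate direction by $\theta-\bar\theta$ instead of the paper's $w=u-\bar u$ is a legitimate variant.

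\textbf{The genuine gap is in Step~3.} Every remainder produced in Step~1 carries the weight $a_\xi^{-\mb X}=\frac{\lambda}{\delta_S}(v^S)^{-\mb X}_\xi$, with $\lambda/\delta_S\gg 1$.
\begin{itemize}
\item Only the $O(\delta_S)$ discrepancies are harmless under this weight; they cost $C\lambda\,\mathcal G^S$.
\item The cubic Taylor remainders cost $C\varepsilon_1\frac{\lambda}{\delta_S}\mathcal G^S$.
\item The rarefaction/contact tails cost $C(\delta_R+\delta_C)\frac{\lambda}{\delta_S}\mathcal G^S$.
\end{itemize}
Neither of the last two is bounded by $C(\varepsilon_1+\delta_0)\mathcal G^S$. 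You also cannot take $\varepsilon_1\ll\delta_S/\lambda$, because $\varepsilon_1$ must be uniform in the shock strength, which may be arbitrarily small.

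The paper splits each such term into two parts. The transversal pieces carry the same weight as $\mb G_1,\mb G_2$, so they are bounded by $C(\varepsilon_1+\delta_0)(\mb G_1+\mb G_2)$. The degenerate piece needs a separate argument.
\begin{itemize}
\item For the cubic term, the paper uses $\|f\|_{L^\infty}^2\le C\|f\|_{L^2}\|f_\xi\|_{L^2}$ and Cauchy--Schwarz against $\int_{\bbr}v^S_\xi\,d\xi=\delta_S$:
\[
\frac{\lambda}{\delta_S}\int_{\bbr}|(v^S)^{-\mb X}_\xi|\,|u-\bar u|^3d\xi\le\frac{\lambda}{\delta_S}\|u-\bar u\|_{L^\infty}^2\int_{\bbr}|(v^S)^{-\mb X}_\xi|\,|u-\bar u|\,d\xi\le C\varepsilon_1\frac{\lambda}{\sqrt{\delta_S}}\|(u-\bar u)_\xi\|_{L^2}\sqrt{\mathcal G^S}.
\]
The upper bound $\lambda\le C\sqrt{\delta_S}$ in \eqref{lamsmall} is exactly what makes this $O(\varepsilon_1)$.
\item For the tails, it uses $\|u-\bar u\|_{L^4}^2\le C\|(u-\bar u)_\xi\|_{L^2}^{1/2}\|u-\bar u\|_{L^2}^{3/2}$, the second line of \eqref{wave-interactions}, and Young's inequality with exponents $4$ and $4/3$. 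That choice is where the power $4/3$ comes from; exponents $3/2$ and $3$ would not give it.
\end{itemize}
These two steps are the only source of the term $C(\varepsilon_1+\delta_0)\|(u-\bar u)_\xi\|_{L^2}^2$ in \eqref{eq-sharp1}. No term of $\mb B_1$, $\mb B_2$, $\mb G$, $\mb D$ or $\dot{\mb X}$ involves a derivative ``passing between $v,u,\theta$ through the hyperbolic equations'', so your explanation of that term is wrong. In your $\theta$-normalization, the same device would instead produce $\|(\theta-\bar\theta)_\xi\|_{L^2}^2\le C\mb D$, which a small fraction of $\mb D$ could absorb. But you only get there once you recognize the $\lambda/\delta_S$ problem.

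\textbf{Step~2 bookkeeping also needs correcting} before the ``exact closing'' can be checked.
\begin{itemize}
\item The degenerate direction is not the $\theta$-axis. Along it, $v-\bar v\approx-\frac{v^*}{(\gamma-1)\theta^*}(\theta-\bar\theta)$ and $u-\bar u\approx-\sigma^*(v-\bar v)$. So in your normalization $\mb B_1$ contributes the full quadratic form evaluated on that vector, namely $\alpha':=\alpha^*\frac{\gamma R}{(\gamma-1)^2\theta^*}$.
\item Correspondingly, $\alpha^*$ is this same value per unit of $u-\bar u$. It is not the $v$-coefficient of $\mb B_1$, as you claimed.
\item $\mb D$ carries the factor $\frac{\kappa}{v\theta}$, and you dropped the resulting $1/\theta^*$. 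With it, \eqref{sharp-D} and Lemma~\ref{lem-poin} give, for $w=\theta-\bar\theta$, $\mb D\ge 2\alpha'\big(1-C(\delta_0+\varepsilon_1)\big)\delta_S\int_0^1|w-\bar w|^2dy$.
\end{itemize}
Then, up to absorbable errors, $\mb B_1-\frac34\mb D\le-\frac12\alpha'\delta_S\int_0^1w^2dy+\frac32\alpha'\delta_S\bar w^2$. The shift term with the paper's $M$ supplies $-3\alpha'\delta_S\bar w^2$, so your coefficient comparison closes with room to spare.

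The paper normalizes by $u-\bar u$ instead, and loses a Young factor $2$ on the mean inside $\mb D$: this is the $-4\alpha^*\delta_S\bar w^2$ term in \eqref{D12}, with the paper's $w=u-\bar u$. That loss is exactly why it needs $M=3\alpha^*/(\sigma^*)^2$.
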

\begin{proof}
	To exploit the good structure in the integrands, we need to rewrite the main terms in terms of the new variables $y$ and $w$:
    \begin{equation}\label{omega}
		w :=u^{\mb X}-\bar u^{\mb X}
	\end{equation}
	and
	\begin{equation}\label{y-xi}
		y:=\frac{ v^S(\xi)-v^*}{\deltas}.
	\end{equation}
    Since  $\deltas:=v_+-v^*>0$ and $v^S_\xi>0$, the change of variable $\xi\in\bbr\mapsto y\in (0,1)$ is well-defined, with
	\begin{equation}\label{dery}
		\frac{dy}{d\xi} =\frac{v^S_\xi}{\delta_S}>0.
	\end{equation}
	Note also that $a(\xi)=1+\lam y$ and  $a'(\xi)=\lam (dy/d\xi)>0$.\\
    
	To find the sharp estimates, we will also use the $O(1)$-constants $p^*, \s^*$ defined in \eqref{pstar},
	which are indeed independent of the small constants $\delta_1, \eps_1$, since $\frac{v_+}{2}\le v^*\le v_+$ and $\frac{\theta_+}{2}\le \theta^*\le \theta_+$. \\
	
	First, there are some good terms in the combined term, so we first estimate $\mb B_2 - \mb G$ to get the good terms $\mb G_1, \mb G_2$, which are defined in \eqref{b2g}. And then, we estimate the remaining terms one by one.
	
	\noindent$\bullet$ {\bf Estimates on $\mb B_2 - \mb G$: } \\
    First, by \eqref{def-rent}, 
\begin{align*}
\begin{aligned}
&(u-\bar u)(p-\bar p) -\sigma \bar \theta \eta\big(U(t,\xi)|\bar U(t,\xi) \big) \\
&\qquad = (u-\bar u)(p-\bar p) -\sigma \Big[R\bar\theta\Phi(\frac{v}{\bar v})+\frac{R\bar\theta}{\gamma-1}\Phi(\frac{\theta}{\bar \theta}) + \frac{(u-\bar u)^2}{2} \Big].
\end{aligned}
\end{align*}
Using  $p-\bar p =\frac{R}{v} (\theta-\bar\theta) -\frac{\bar p}{v} (v-\bar v) $, and \eqref{sm1},  we have
\begin{align*}
&(u-\bar u)(p-\bar p) -\sigma \bar \theta \eta\big(U(t,\xi)|\bar U(t,\xi) \big) \\
&\quad = (u-\bar u)\Big(\frac{R}{v} (\theta-\bar\theta) -\frac{\bar p}{v} (v-\bar v)  \Big) -\sigma \Big[R\bar\theta\Phi(\frac{v}{\bar v})+\frac{R\bar\theta}{\gamma-1}\Phi(\frac{\theta}{\bar \theta}) + \frac{(u-\bar u)^2}{2} \Big]\\
&\quad\le (u-\bar u)\Big[\frac{R}{v^*}(\theta-\bar\theta)-\frac{p^*}{v^*}(v-\bar v)\Big]  -\frac{R\sigma^*\theta^*}{2(v^*)^2}(v-\bar v)^2 - \frac{R\sigma^*}{2(\gamma-1)\theta^*}(\theta-\bar\theta)^2 \\
&\quad\quad -\frac{\sigma^*}{2}(u-\bar u)^2 +C\big(|v-\bar v| + |\bar v - v^*| + |\bar\theta - \theta^*| \big) |(v-\bar v, u-\bar u,\theta-\bar\theta)|^2\\
&\quad=  -\frac{R\sigma^*\theta^*}{2(v^*)^2} \Big[(v-\bar v)+\frac{u-\bar u}{\sigma^*}  \Big]^2 - \frac{R\sigma^*}{2(\gamma-1)\theta^*} \Big[(\theta-\bar\theta)-\frac{(\gamma-1)\theta^*}{v^*\s^*}(u-\bar u)\Big]^2\\
&\quad\quad +C\big(|v-\bar v|+ \deltas + |(v^R - v_* ,\theta^R - \theta_*)| +|(v^C - v^*,\theta^C - \theta^*)|  \big) |(v-\bar v, u-\bar u,\theta-\bar\theta)|^2,
\end{align*}
where the last equality is obtained by using
\[
\frac{R\theta^*}{2(v^*)^2 \s^*} + \frac{R(\gamma-1)\theta^*}{2(v^*)^2 \s^*} = \frac{R\gamma\theta^*}{2(v^*)^2 \s^*} = \frac{\s^*}{2} \quad \mbox{by }  \sigma^*= \frac{\sqrt{\gamma R\theta^*}}{v^*}.
\]
	Therefore, we can obtain
	\beq\label{b2g}
    \begin{aligned}
        \mb B_2(U) - \mb G (U) &= \int_\bbr a_\xi^{-\mb X} \left[(u-\bar u)(p-\bar p) -\s \bar \theta\eta(U|\bar U )\right]d\xi\\
        &\le -\frac{R\sigma^*\theta^*}{2(v^*)^2} \int_\bbr a_\xi^{-\mb X}  \Big[(v-\bar v)+\frac{u-\bar u}{\sigma^*}  \Big]^2 d\xi \\
        &\quad - \frac{R\sigma^*}{2(\gamma-1)\theta^*} \int_\bbr a_\xi^{-\mb X}  \Big[(\theta-\bar\theta)-\frac{(\gamma-1)\theta^*}{v^*\s^*}(u-\bar u)\Big]^2 d\x \\
        &\quad+\Big(C\deltas \int_\bbr a_\xi^{-\mb X}  |(v-\bar v, u-\bar u,\theta-\bar\theta)|^2 d\x \\
		&\quad + C  \int_\bbr a_\xi^{-\mb X}  |(v-\bar v, u-\bar u,\theta-\bar\theta)|^3 d\x \\
		&\quad  + C  \int_\bbr a_\xi^{-\mb X} \big( |(v^R - v_* ,\theta^R - \theta_*)| +|(v^C - v^*,\theta^C - \theta^*)|  \big) \\
        &\qquad \cdot|(v-\bar v, u-\bar u,\theta-\bar\theta)|^2 d\x \Big)\\
        &:=-\mb G_1 (U) -  \mb G_2 (U) + \mb B_{new} (U).
    \end{aligned}
	\eeq

	The two good terms $\mb G_1, \mb G_2$ will be utilized in the remaining estimates, while
	the bad term $\mb B_{new}$ can be controlled by these good terms $\mb G_1, \mb G_2$ and $\mathcal{G}^S$,
	as follows.
    
    Using \eqref{a-prime} and the interpolation inequality, and $\lam\le C\sqrt{\deltas}$ by \eqref{lamsmall}, together with 
    $$
    |(v-\bar v,u-\bar u, \theta-\bar\theta)|\leq C\eps_1,
    $$
    we have

\begin{align*}
&  \int_\bbr a_\xi^{-\mb X}  |(v-\bar v, u-\bar u,\theta-\bar\theta)|^3 d\x \\
&\quad \le C \int_\bbr  |a_\xi^{-\mb X} |\Big|(v-\bar v)+\frac{u-\bar u}{\sigma^*}\Big| ^3d\xi+C\int_\bbr |a_\xi^{-\mb X}||u-\bar u|^3d\xi\\
&\qquad+C \int_\bbr |a_\xi^{-\mb X} |\Big| (\theta-\bar\theta)-\frac{(\gamma-1)\theta^*}{v^*\s^*}(u-\bar u) \Big|^3 d\xi \\
&\quad \le C\eps_1 (\mb{G}_1+\mb{G}_2)+C \frac{\l}{\deltas} \int_\bbr |(v^S)_\xi^{-\mb X} |  \|u-\bar u\|_{L^\infty(\bbr)}^2 |u-\bar u| d\xi \\
&\quad \le C \eps_1(\mb{G}_1+\mb{G}_2)+C\frac{\lam}{\deltas}  \|(u-\bar u)_\xi\|_{L^2(\bbr)} \|u-\bar u\|_{L^2(\bbr)}  \\
&\qquad\cdot\sqrt{\int_\bbr   |(v^S)_\xi^{-\mb X} | |u-\bar u|^2 d\xi} \sqrt{\int_\bbr  |(v^S)_\xi^{-\mb X} |  d\xi} \\
&\quad \le C \eps_1(\mb{G}_1+\mb{G}_2)+C\eps_1 \frac{\lam}{\sqrt{\deltas}}  \|(u-\bar u)_\xi\|_{L^2(\bbr)}  \sqrt{\mathcal{G}^S}\\
&\quad   \le C \eps_1(\mb{G}_1+\mb{G}_2 + \|(u-\bar u)_\xi\|_{L^2(\bbr)}^2  +\mathcal{G}^S).
\end{align*}
Likewise, using the interpolation inequality, Lemma \ref{lemma2.2} and {\it a priori} assumption‌ \eqref{apri-ass}, we have
\begin{align*}
& \int_\bbr a_\xi^{-\mb X} \big( |(v^R - v_* ,\theta^R - \theta_*)| +|(v^C - v^*,\theta^C - \theta^*)|  \big) |(v-\bar v, u-\bar u,\theta-\bar\theta)|^2 d\x \\
&\quad \le C(\delta_R +\delta_C)  (\mb{G}_1+\mb{G}_2) \\
&\qquad+C \frac{\l}{\deltas} \int_\bbr  |(v^S)_\xi^{-\mb X} |  \big( |(v^R - v_* ,\theta^R - \theta_*)| +|(v^C - v^*,\theta^C - \theta^*)|  \big) |u-\bar u|^2 d\xi  \\
&\quad\le C(\delta_R +\delta_C)  (\mb{G}_1+\mb{G}_2)+C\frac{\lam}{\deltas} \|u-\bar u\|_{L^4(\bbr)}^2 \Big[\| |(v^S)_\xi^{-\mb X}| | (v^R - v_* ,\theta^R - \theta_*)|\|_{L^2(\bbr)}\\
 &\qquad \quad +\| |(v^S)_\xi^{-\mb X} | | (v^C - v^* ,\theta^C - \theta^*)|\|_{L^2(\bbr)}  \Big]  \\
 &\quad \le C \delta_0(\mb{G}_1+\mb{G}_2)+C\frac{1}{\sqrt\deltas} \|(u-\bar u)_\xi\|_{L^2(\bbr)}^{1/2}  \|u-\bar u\|_{L^2(\bbr)}^{3/2}  \deltas^{\frac32}(\deltar+\delta_C)e^{-C\deltas t}\\
&\quad\leq  C \delta_0(\mb{G}_1+\mb{G}_2)+C\|(u-\bar u)_\xi\|_{L^2(\bbr)}^{1/2} \eps_1^{\frac32}\deltas (\deltar+\delta_C)e^{-C\deltas t}\\
&\quad\leq  C \delta_0(\mb{G}_1+\mb{G}_2)+C\eps_1 \|(u-\bar u)_\xi\|_{L^2(\bbr)}^{2} +C \deltas^{4/3} ( \deltar^{4/3}+ \delta_C^{4/3}) e^{-C\deltas t}.
\end{align*}
	Thus, 
	\beq\label{bnew}
    \begin{aligned}
	\mb B_{new} \le& C (\l+\eps_1+\delta_0) ( \mb{G}_1+\mb{G}_2 +\mathcal{G}^S)+C (\eps_1+\delta_0) \| (u-\bar u)_\x\|^2_{L^2(\bbr)}\\
    &+C \deltas^{4/3} ( \deltar^{4/3}+ \delta_C^{4/3}) e^{-C\deltas t}.
    \end{aligned}
	\eeq
	
\noindent$\bullet$ {\bf Estimates on $-\frac{\deltas}{2M} |\dot{\mb X}|^2$:}\\
    
First, to estimate the term $-\frac{\deltas}{2M} |\dot{\mb X}|^2$, we will estimate $\mb Y_1, \mb Y_2, \mb Y_3$ due to \eqref{defxy}. By the change of variable \eqref{dery} and $\eqref{VS}_1$, together with the change of variable $\xi\mapsto\x+\mb X(t)$, we have
\[
\mb Y_1 = \int_\bbr a u^S_\xi(u^{\mb{X}}-\bar u^{\mb{X}}) d\xi =-\deltas \s\int_0^1 a w  dy.
\]
Using \eqref{sm1} and $|a-1|\le \lam$, we have
\beq\label{y1}
\left|\mb Y_1 + \deltas \s^*\int_0^1 w dy \right| \le C\deltas \l \int_0^1|w| dy.
\eeq
Similarly, for 
\begin{align*}
 \mb Y_2 &= \int_{\bbr} a \frac{v^S_\xi \bar p^{\mb X}}{\bar v^{\mb X}}(v^{\mb X}-\bar v^{\mb X}) d\xi \\
 &=-\int_{\bbr}a \frac{v^S_\xi \bar p^{\mb X} }{\bar v^{\mb X} \s^*}(u^{\mb X}-\bar u^{\mb X}) d\x +\int_{\bbr} a \frac{v^S_\xi \bar p^{\mb X}}{\bar v^{\mb X}}\Big[(v^{\mb X}-\bar v^{\mb X})+\frac{u^{\mb X}-\bar u^{\mb X}}{\sigma^*}\Big] d\x,
\end{align*} 
using 
\begin{align*} 
|\bar p^{\mb X} - p^*| & \le C( |\bar v^{\mb X} - v^*| +|\bar \theta^{\mb X} - \theta^*| ) \\
& \le C(\deltar+\deltas +\delta_C) \le C\delta_0, 
\end{align*} 
and \eqref{a-prime}, we have
\beq\label{y2}
\begin{aligned}
\left|\mb Y_2 + \frac{p^*\deltas}{v^*\sigma^*}\int_0^1 w dy\right| &\le C\deltas (\lam+\delta_0)\int_0^1|w| dy+ C\frac{\deltas}{\l}\int_{\bbr} a_\xi \left|(v^{\mb X}-\bar v^{\mb X})+\frac{u^{\mb X}-\bar u^{\mb X}}{\sigma^*}\right| d\x\\
&\le C\deltas (\lam+\delta_0)\int_0^1|w| dy+ C\frac{\deltas}{\l}\int_{\bbr} a^{-\mb X}_\xi \left|(v-\bar v)+\frac{u-\bar u}{\sigma^*}\right| d\x.
\end{aligned}
\eeq
Likewise, since
\begin{align*}
\mb Y_3 &= \int_{\bbr}a^{-\mb X} \frac{R}{\gamma-1}\frac{(\theta^S)^{-\mb{X}}_\xi}{\bar\theta}(\theta-\bar\theta) d\xi  \\
  &= \frac{R \theta^*}{v^*\s^*} \int_{\bbr} a \frac{\theta^S_\xi}{\bar\theta^{\mb X}}  (u^{\mb X}-\bar u^{\mb X}) d\x \\
&\quad+\int_{\bbr}a^{-\mb X} \frac{R}{\gamma-1}\frac{(\theta^S)^{-\mb{X}}_\xi}{\bar\theta}\Big[(\theta-\bar\theta)-\frac{(\gamma-1)\theta^*}{v^*\s^*}(u-\bar u)\Big] d\x,
\end{align*}
using $|\theta-\theta^*| \le C(\eps_1 +\delta_0)$ and \eqref{theta-s}, \eqref{d-weight}, we have
\beq\label{y3}
\begin{aligned}
\left|\mb Y_3 + \frac{(\gamma-1)p^*\deltas}{v^*\sigma^*}\int_0^1 w dy \right| \le & C\deltas (\lam+\delta_0+\varepsilon_1)\int_0^1|w| dy\\
  &+ C\frac{\deltas}{\l}\int_{\bbr} a^{-\mb{X}}_\xi \left| (\theta-\bar\theta)-\frac{(\gamma-1)\theta^*}{v^*\s^*}(u-\bar u)  \right|  d\x.
\end{aligned}
\eeq
Therefore, using \eqref{defxy}, \eqref{y1}, \eqref{y2} and \eqref{y3} together with $\s^* =\sqrt\frac{\gamma p^*}{v^*}$, we have
\begin{align*}
\di\left| \dot{\mb X} -2\sigma^*M\int_0^1 w dy\right| & \le C (\lam+\delta_0+\eps_1)\int_0^1|w| dy+ \frac{C}{\l}\int_{\bbr} a^{-\mb{X}}_\xi \left|(v-\bar v)+\frac{u-\bar u}{\sigma^*}\right| d\x\\
&\quad + \frac{C}{\l}\int_{\bbr} a^{-\mb{X}}_\xi \left| (\theta-\bar\theta)-\frac{(\gamma-1)\theta^*}{v^*\s^*}(u-\bar u) \right| d\x,
\end{align*}
which implies
\[
 \left( \left|2\sigma^*M \int_0^1 w dy\right| - |\dot{\mb X} | \right)^2  \le  C(\lam+\delta_0+\eps_1)^2 \int_0^1|w|^2 dy + \frac{C}{\l^2}  (\mb G_1 +\mb G_2)\int_{\bbr} a^{-\mb{X}}_\xi d\x.
\]
This and the algebraic inequality $\frac{p^2}{2}-q^2 \le (p-q)^2$ for all $p,q\ge 0$ imply
\[
2(\sigma^*)^2M^2\left(\int_0^1 w dy\right)^2 - |\dot{\mb X}|^2 \le  C(\lam+\delta_0+\eps_1)^2 \int_0^1|w|^2 dy + \frac{C}{\l}  (\mb G_1 +\mb G_2).
\]
Thus,
\beq\label{gxest}
 -\frac{\deltas}{2M} |\dot{\mb X}|^2 \le -(\sigma^*)^2M\deltas\left(\int_0^1 w dy\right)^2 +C\ds(\lam+\delta_0+\eps_1)^2 \int_0^1 w^2 dy
+ \frac{C\delta_S}{\l}  (\mb G_1 +\mb G_2). 
\eeq
	\noindent$\bullet$ {\bf Estimates on $\mb B_1$:}\\
    Set
\beq\label{b1*}
\begin{aligned}
 \mb B_1(U)&=  \int_\bbr a^{-\mb X} (v^S)_\xi^{-\mb X} \Big[ \frac{(\gamma+1)p^*\sigma^*}{2(v^*)^2} (v-\bar v)^2+\frac{R\sigma^*}{2v^*\theta^*}(\theta-\bar\theta)^2-\frac{p^*\sigma^*}{v^*\theta^*}(v-\bar v)(\theta-\bar\theta)\Big]d\x\\
& := \mb B_{11}(U)+ \mb B_{12}(U)+\mb B_{13}(U).
\end{aligned}
\eeq
First, using Young's inequality, 
\begin{align*}
\mb B_{11} & = \frac{(\gamma+1)p^*\sigma^*}{2(v^*)^2}  \int_\bbr a^{-\mb X} (v^S)_\xi^{-\mb X} \Big|\Big((v-\bar v)+\frac{u-\bar u}{\sigma^*}\Big) - \frac{u-\bar u}{\sigma^*} \Big|^2 d\xi  \\
&\le  \frac{(\gamma+1)p^*\sigma^*}{2(v^*)^2} \left(1+\l+\Big(\frac{\deltas}{\l}\Big)^\frac12 \right) \int_\bbr (v^S)_\xi^{-\mb X} \Big| \frac{u-\bar u}{\sigma^*} \Big|^2 d\xi \\
&\quad + C\Big(\frac{\deltas}{\l}\Big)^{-\frac12} \int_\bbr  (v^S)_\xi^{-\mb X} \Big|(v-\bar v)+\frac{u-\bar u}{\sigma^*} \Big|^2 d\xi\\
&\le  \frac{(\gamma+1)p^*\sigma^*}{2(v^*)^2} \left(1+\l+\Big(\frac{\deltas}{\l}\Big)^\frac12 \right) \int_\bbr (v^S)_\xi \Big| \frac{u^{\mb X}-\bar u^{\mb X}}{\sigma^*} \Big|^2 d\xi \\
&\quad + C\Big(\frac{\deltas}{\l}\Big)^{-\frac12} \int_\bbr  (v^S)_\xi^{-\mb X} \Big|(v-\bar v)+\frac{u-\bar u}{\sigma^*} \Big|^2 d\xi,
\end{align*}
which together with \eqref{a-prime} and the change of variable $\xi\mapsto\x+\mb X(t)$ yields
\beq\label{b11}
\mb B_{11}  \le \frac{(\gamma+1)p^*}{2(v^*)^2\s^*} \left(1+\l+\Big(\frac{\deltas}{\l}\Big)^\frac12 \right) \deltas \int_0^1 w^2 dy + C\Big(\frac{\deltas}{\l}\Big)^{\frac12} \mb G_1.
\eeq
Similarly, we estimate (using $p^*=R\theta^*/v^*$)
\begin{align}\label{b12}
\begin{aligned}
\mb B_{12} & =\frac{R\sigma^*}{2v^*\theta^*}  \int_\bbr a^{-\mb X} (v^S)_\xi^{-\mb X}  \left| \Big( (\theta-\bar\theta)-\frac{(\gamma-1)\theta^*}{v^*\s^*}(u-\bar u) \Big) + \frac{(\gamma-1)\theta^*}{v^*\s^*}(u-\bar u) \right|^2 d\xi  \\
&\le  \frac{(\gamma-1)^2p^*}{2(v^*)^2\s^*} \left(1+\l+\Big(\frac{\deltas}{\l}\Big)^\frac12 \right) \deltas \int_0^1 w^2 dy + C\Big(\frac{\deltas}{\l}\Big)^{\frac12} \mb G_2,
\end{aligned}
\end{align}
and 
\begin{align}\label{b13}
\begin{aligned}
\mb B_{13} & \le \frac{p^*\sigma^*}{v^*\theta^*} \int_\bbr a^{-\mb X} (v^S)_\xi^{-\mb X} \Big|\Big((v-\bar v)+\frac{u-\bar u}{\sigma^*}\Big) - \frac{u-\bar u}{\sigma^*} \Big| \\
&\qquad\qquad\quad \cdot \left| \Big( (\theta-\bar\theta)-\frac{(\gamma-1)\theta^*}{v^*\s^*}(u-\bar u) \Big) + \frac{(\gamma-1)\theta^*}{v^*\s^*}(u-\bar u) \right| d\xi  \\
&\le  \frac{(\gamma-1)p^*}{(v^*)^2\s^*} \left(1+\l+\Big(\frac{\deltas}{\l}\Big)^\frac12 \right) \deltas \int_0^1 w^2 dy + C\Big(\frac{\deltas}{\l}\Big)^{\frac12} (\mb G_1 + \mb G_2).
\end{aligned}
\end{align}
Let $\alpha^*$ be the $O(1)$-constant defined by
\beq\label{alpha*}
\alpha^*:= \frac{\gamma(\gamma+1) p^*}{2(v^*)^2\s^*}.
\eeq
Substituting \eqref{b11}, \eqref{b12} and \eqref{b13} into \eqref{b1*}, we have
\beq\label{b1-est}
\mb B_1 \le \alpha^* \left(1+\l+\Big(\frac{\deltas}{\l}\Big)^\frac12 \right) \deltas \int_0^1 w^2 dy+ C\Big(\frac{\deltas}{\l}\Big)^{\frac12} (\mb G_1 + \mb G_2).
\eeq
	
	\noindent$\bullet$ {\bf Estimates on $\mb D$:} \\
	We first recall that
	\beq\label{D-*}
	\mb D(U)= \int_\bbr \!a^{-\mb X}\Big[ \frac{\kappa}{v\theta}|(\theta-\bar \theta)_\xi|^2\Big]d\x .
	\eeq
	To estimate $\mb D$, we denote
$p_+:=p(v_+,\theta_+)$ and $p^*:=p(v^*,\theta^*)$
for simplicity. Since $y=\frac{v^S-v^*}{\deltas}$ and $1-y=\frac{v_+-v^S}{\deltas}$, it follows from \eqref{sharp-D} that
	\beq\label{sharp-d}
	\left| \frac{1}{y(1-y)} \frac{\kappa}{v^S} \frac{dy}{d\xi} -\deltas \alpha^*\frac{ R\gamma}{ (\gamma-1)^2} \right|\leq C\deltas^2.
	\eeq
	
	Then, using \eqref{sharp-d}, the fact that $a\geq1$, the change of variables in \eqref{y-xi}, and the translation $\xi\mapsto\xi+\mb X(t)$, we obtain
	\beq\label{D2e}
	\begin{aligned}
		\mb D&\geq\int_\bbr \frac{\kappa}{v^{\mb X}\theta^{\mb X}}|(\theta^{\mb X}-\bar \theta^{\mb X})_\xi|^2d\x=\int_0^1\frac{\kappa}{v^{\mb X}\theta^{\mb X}}|(\theta^{\mb X}-\bar \theta^{\mb X})_y|^2 \Big(\frac{dy}{d\xi}\Big) dy\\
		& =\int_0^1\frac{ v^S}{ v^{\mb X}\theta^{\mb X}}|(\theta^{\mb X}-\bar \theta^{\mb X})_y|^2 \frac{\kappa}{v^S}\Big(\frac{dy}{d\xi}\Big)  dy\\
		& =\int_0^1\frac{1 }{\theta^*}|(\theta^{\mb X}-\bar \theta^{\mb X})_y|^2 \frac{\kappa}{v^S}\Big(\frac{dy}{d\xi}\Big)  dy+\int_0^1\left(\frac{v^S}{v^{\mb X}\theta^{\mb X}}-\frac{1}{\theta^*}\right)|(\theta^{\mb X}-\bar \theta^{\mb X})_y|^2 \frac{\kappa}{v^S}\Big(\frac{dy}{d\xi}\Big)  dy\\
		& \ge  \alpha^*\frac{R\gamma}{(\gamma-1)^2} \frac{1}{ \theta^*}\big(1-C(\delta_0+\eps_1)\big)\deltas  \int_0^1y(1-y)  |(\theta^{\mb X}-\bar \theta^{\mb X})_y|^2   dy,
	\end{aligned}
	\eeq
	together with $|v^{\mb X}-v^S|\le C(\eps_1+\delta_0)$.	
	
	The goal is to use the Poincar\'{e}-type inequality in Lemma \ref{lem-poin} to absorb the main bad term $\mb B_1$ by the diffusion term $\mb D$. However, the term $w$ does not explicitly appear in $\mb D$. Therefore, we first need to extract a good term on $w$ from $\mb D$ as follows.\\
	First, by Lemma \ref{lem-poin} together with
	\[
	\int_0^1 |w-\bar w|^2 dy = \int_0^1 w^2 dy -{\bar w}^2, \quad \bar w:=\int_0^1 w dy,
	\]
	we have
	\begin{align*}
			\mb D \ge 2 \alpha^*\frac{R\gamma}{ (\gamma-1)^2} \big(1-C(\delta_0+\eps_1)\big)  \deltas \bigg[  \frac{1}{\theta^*} \left(\int_0^1 |\theta^{\mb X}-\bar \theta^{\mb X}|^2  dy -\Big(\int_0^1(\theta^{\mb X}-\bar \theta^{\mb X}) dy\Big)^2\right) \bigg] .
	\end{align*}
	By the definition of $w$ in \eqref{omega}, Young's inequality, and the change of variables $\xi\mapsto\xi+\mb X(t)$, we obtain
	\begin{align*}
			\int_0^1 |\theta^{\mb X}-\bar \theta^{\mb X}|^2  dy & =  \int_0^1  \left| \Big( (\theta^{\mb X}-\bar\theta^{\mb X})-\frac{(\gamma-1)\theta^*}{v^*\s^*}(u^{\mb X}-\bar u^{\mb X}) \Big) + \frac{(\gamma-1)\theta^*}{v^*\s^*}(u^{\mb X}-\bar u^{\mb X}) \right|^2 dy  \\
			&\ge  \left(\frac{(\gamma-1)\theta^*}{v^*\s^*}\right)^2 \left(1-\Big(\frac{\deltas}{\l}\Big)^{\frac12} \right) \int_0^1 w^2 dy \\
			&\quad - C\Big(\frac{\deltas}{\l}\Big)^{-\frac12}  \int_0^1   \Big( (\theta^{\mb X}-\bar\theta^{\mb X})-\frac{(\gamma-1)\theta^*}{v^*\s^*}(u^{\mb X}-\bar u^{\mb X}) \Big)^2 dy,
	\end{align*}
	and 
	\begin{align*}
			\left(\int_0^1(\theta^{\mb X}-\bar \theta^{\mb X}) dy\right)^2 & \le  2\left(\int_0^1 \Big( \frac{(\gamma-1)\theta^*}{v^*\s^*}(u^{\mb X}-\bar u^{\mb X}) \Big)  dy\right)^2 \\
			&\quad +2\left(\int_0^1 \Big( (\theta^{\mb X}-\bar\theta^{\mb X})-\frac{(\gamma-1)\theta^*}{v^*\s^*}(u^{\mb X}-\bar u^{\mb X}) \Big)  dy\right)^2 \\
			&\le 2 \left(\frac{(\gamma-1)\theta^*}{v^*\s^*}\right)^2 \bar w^2 +2 \int_0^1   \Big( (\theta^{\mb X}-\bar\theta^{\mb X})-\frac{(\gamma-1)\theta^*}{v^*\s^*}(u^{\mb X}-\bar u^{\mb X}) \Big)^2 dy,
	\end{align*}
	which together with $\s^*=\frac{\sqrt{\gamma R \theta^*}}{v^*}$, \eqref{dery} and \eqref{a-prime} implies
	\begin{align}
		\begin{aligned}\label{D12}
			\mb D &\ge 2 \alpha^* (1-C(\delta_0+\eps_1))  \deltas  \left(1-\Big(\frac{\deltas}{\l}\Big)^{\frac12} \right) \int_0^1 w^2 dy \\
			&\qquad  -2\alpha^* \deltas \frac{R\gamma}{ (\gamma-1)^2} \left[\frac{2}{ \theta^*} \left(\frac{(\gamma-1)\theta^*}{v^*\s^*}\right)^2 \right] \bar w^2  - C\Big(\frac{\deltas}{\l}\Big)^{\frac12} \mb G_2\\
			&= 2 \alpha^* \left(1-C(\delta_0+\eps_1)-\Big(\frac{\deltas}{\l}\Big)^{\frac12} \right)  \deltas \int_0^1 w^2 dy \\
			&\quad -4\alpha^* \deltas  \bar w^2  - C\Big(\frac{\deltas}{\l}\Big)^{\frac12} \mb G_2.
		\end{aligned}
	\end{align}
	
	\noindent$\bullet$ {\bf Conclusion:}\\
	Combining \eqref{b2g}, \eqref{bnew}, \eqref{gxest}, \eqref{b1-est}, and \eqref{D12}, and using the smallness of $\frac{\deltas}{\l}$, $\l$ (as in \eqref{lamsmall}), $\delta_0$, and $\eps_1$, we obtain
	\begin{align*}
			-\frac{\deltas}{2M} |\dot{\mb X}|^2 + \mb B_1+\mb B_2-\mb{G} -\frac{3}{4}\mb D & \le C (\l+\eps_1+\delta_0) \mathcal{G}^S +C \deltas^{4/3} ( \deltar^{4/3}+ \delta_C^{4/3}) e^{-C\deltas t}  \\
			&\quad  -\frac{\alpha^*}{4} \deltas \int_0^1 w^2 dy- \frac{1}{2} (\mb G_1+ \mb G_2)+C (\eps_1+\delta_0) \\
            &\quad \cdot \| (u-\bar u)_\x\|^2_{L^2(\bbr)}
			  -(\sigma^*)^2M\deltas \bar w^2   + 3\alpha^* \deltas  \bar w^2.
	\end{align*}
	Choosing the constant $M$ in \eqref{X(t)} as
\[
M=\frac{3\alpha^*}{(\sigma^*)^2},
\]
we obtain
	\begin{align*}
			-\frac{\deltas}{2M} |\dot{\mb X}|^2 + \mb B_1+\mb B_2-\mb{G} -\frac{3}{4}\mb D & \le C (\l+\eps_1+\delta_0) \mathcal{G}^S +C \deltas^{4/3} ( \deltar^{4/3}+ \delta_C^{4/3}) e^{-C\deltas t}  \\
			&\quad +C (\eps_1+\delta_0) \| (u-\bar u)_\x\|^2_{L^2(\bbr)} -\frac{\alpha^*}{4} \deltas \int_0^1 w^2 dy\\
            &\quad - \frac{1}{2} (\mb G_1+ \mb G_2) .
	\end{align*}
	Finally, using
	\[
	\deltas \int_0^1 w^2 dy = \int_\bbr |v^S_\x| |u^{\mb X}-\bar u^{\mb X}|^2 d\x=\int_\bbr |(v^S)^{-\mb X}_\x| |u-\bar u|^2 d\x,
	\]
	and
	\begin{align*}
			\int_\bbr |(v^S)^{-\mb X}_\x| (v-\bar v)^2 d\xi &\leq 2\int_\bbr |(v^S)^{-\mb X}_\x| \Big|(v-\bar v)+\frac{u-\bar u}{\sigma^*}\Big|^2 d\xi + 2\int_\bbr |(v^S)^{-\mb X}_\x| \Big(\frac{u-\bar u}{\sigma^*}\Big) ^2d\xi \\
			&\leq C\frac{\deltas}{\l}\mb G_1 + C \int_\bbr   |(v^S)^{-\mb X}_\x|  \big(u-\bar u\big)^2d\xi,
	\end{align*}
	and
	\begin{align*}
			\int_\bbr|(v^S)^{-\mb X}_\x| (\theta-\bar \theta)^2 d\xi &\leq 2\int_\bbr  |(v^S)^{-\mb X}_\x| \Big|(\theta-\bar\theta)-\frac{(\gamma-1)\theta^*}{v^*\s^*}(u-\bar u)\Big|^2 d\xi \\
            &\quad+ C\int_\bbr   |(v^S)^{-\mb X}_\x| (u-\bar u)^2d\xi\\
			&\leq C\frac{\deltas}{\l}\mb G_2 + C \int_\bbr   |(v^S)^{-\mb X}_\x|  \big(u-\bar u\big)^2d\xi,
	\end{align*}
	we have
    \begin{equation*}
        \begin{aligned}
        &-\frac{\deltas}{2M} |\dot{\mb X}|^2 + \mb B_1+\mb B_2-\mb{G} -\frac{3}{4}\mb D \\
        &\le - C\int_\bbr|(v^S)^{-\mb X}_\x| |(v-\bar v, u-\bar u, \theta-\bar \theta)|^2 d\x+C (\eps_1+\delta_0) \| (u-\bar u)_\x\|^2_{L^2(\bbr)} \\
        &\quad+C \deltas^{4/3} ( \deltar^{4/3}+ \delta_C^{4/3}) e^{-C\deltas t},
        \end{aligned}
    \end{equation*}
	which proves the desired estimate $\eqref{eq-sharp1}$.
	
\end{proof}

\subsection{Proof of Lemma \ref{lem-zvh}}
In order to prove Lemma \ref{lem-zvh}, we need to estimate the right-hand side of \eqref{ineq-2}. Since the leading-order terms have already been estimated in Lemma \ref{lem-sharp1}, we only need to handle the remaining terms in \eqref{ineq-2} as follows.
\begin{lemma}\label{lem-sharp2}
	There exists $C>0$ such that 
	\begin{align}
		\begin{aligned}\label{eq-sharp2}
		&\frac{C}{\deltas}\sum_{i=4}^6 |\mb{Y}_i|^2  +\sum_{i=3}^6 \mb B_i +\mb S_1+\mb S_2   -\frac{1}{8} \mb D\\
		&\leq -\frac{C}{2}\mathcal{G}^R + C(\delta_0+\eps_1)  \mathcal{G}^S + C \dc(1+t)^{-1} \int_\bbr  e^{-\frac{\tilde C |\xi+\sigma t|^2}{1+t}} |(v-\bar v,\theta-\bar\theta)|^2 d\xi\\
		&\quad +C\big[\deltas (\deltar+\delta_C) e^{-C \deltas t}+\deltar\delta_Ce^{-Ct}+\delta_C (1+t)^{-\frac 54}\big]\|(u-\bar u,\theta-\bar \theta)\|_{L^2}\\
		&\quad  +C\| Q_2^R\|_{L^1}^{\frac 43}\| \theta-\bar\theta\|_{L^2}^{\frac 23},
		\end{aligned}
	\end{align}
	where the good terms $ \mathcal{G}^R(U),  \mathcal{G}^S(U)$ are as in \eqref{maingood}.
\end{lemma}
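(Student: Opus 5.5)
The plan is to estimate the terms on the left-hand side of \eqref{eq-sharp2} one group at a time. Throughout we use \eqref{smp1}, \eqref{dxbound}, the wave bounds of Lemmas \ref{lemma1.2}, \ref{lemma1.3} and \ref{Lemma 2.2.}, and the interaction bounds \eqref{wave-interactions}. The only genuinely good term that has to be produced is $-\mathcal G^R$. It comes from the rarefaction parts of $\mb B_5+\mb B_6$.

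\textbf{Rarefaction good term.} In $\mb B_5+\mb B_6$ we first substitute $\theta^R_t-\sigma\theta^R_\xi=-\frac{\gamma-1}{R}p^Ru^R_\xi$ from \eqref{rarexi}, and we use $\bar u_\xi=u^R_\xi+u^C_\xi+(u^S)^{-\mb X}_\xi$. Next we expand $\Phi(z)=\frac12(z-1)^2+O(|z-1|^3)$ and $p-\bar p=\frac{R}{v}(\theta-\bar\theta)-\frac{\bar p}{v}(v-\bar v)$. Collecting all terms that carry the factor $u^R_\xi$ then gives an integrand of the form $u^R_\xi\,\mathcal Q(v-\bar v,\theta-\bar\theta)$ plus cubic errors. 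Here $\mathcal Q$ is a quadratic form in $(v-\bar v,\theta-\bar\theta)$ whose coefficients are evaluated at the local state, up to errors of size $O(\delta_0+\eps_1)$.

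The key algebraic step is to check that $-\mathcal Q$ is positive definite. The form is essentially
\[
-\frac{\bar p}{2\bar v^2}(v-\bar v)^2-\frac{R}{2(\gamma-1)\bar\theta}\cdot(\ldots)+\text{cross terms}.
\]
This is the same structure that makes the rarefaction term good in the standard relative-entropy computation for full Navier--Stokes. Since $u^R_\xi>0$ and $u^R_\xi\sim v^R_\xi$ by Lemma \ref{lemma1.2}(1), this yields $-C\mathcal G^R$. I expect this sign verification to be the main obstacle, because the weight $\bar\theta$ multiplies $\eta$ and the terms $-\frac{\bar u_\xi}{\theta}(\theta-\bar\theta)(p-\bar p)$ mix the variables. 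If the direct form turns out not to be definite, I will fall back on the exact identity structure of \cite{KVW25}, specialized to $\mu=0$. The velocity dissipation plays no role in that particular term, so the specialization should go through.

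\textbf{Shock and contact parts.} The shock parts of $\mb B_5,\mb B_6$ are the first to handle. We have subtracted exactly the leading constant-coefficient pieces that enter $\mb B_1$. What remains is therefore bounded by $C(\delta_0+\eps_1)\int|(v^S)^{-\mb X}_\xi||(v-\bar v,u-\bar u,\theta-\bar\theta)|^2$, which is $\le C(\delta_0+\eps_1)\mathcal G^S$ by \eqref{sm1}, \eqref{theta-s} and \eqref{shock-vu}.

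The contact parts use $|\theta^C_t-\sigma\theta^C_\xi|+|u^C_\xi|\le C\delta_C(1+t)^{-1}e^{-\tilde C|\xi+\sigma t|^2/(1+t)}$. These give the heat-kernel-weighted term on the right of \eqref{eq-sharp2}.

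\textbf{Remaining terms.} For $\mb B_3$ we use $|a'|\le C\frac{\lambda}{\delta_S}|v^S_\xi|$ and Young's inequality. This yields $\frac1{16}\mb D+C\frac{\lambda^2}{\delta_S^2}\int |v^S_\xi|^2|\theta-\bar\theta|^2$ together with terms involving $\bar\theta_\xi(v-\bar v)$. Since $|v^S_\xi|\le C\delta_S^2$ and $\lambda^2\le C\delta_S$, both are $\le C\delta_S\mathcal G^S$.

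For $\mb B_4$ we write $\theta_\xi=(\theta-\bar\theta)_\xi+\bar\theta_\xi$ and apply Young's inequality. This bounds it by $\frac1{16}\mb D$ plus terms of the form $|\bar\theta_\xi|^2|(v-\bar v,\theta-\bar\theta)|^2$ and $|\bar\theta_{\xi\xi}||\theta-\bar\theta|^2$. Each of these is controlled by $\delta_0(\mathcal G^R+\mathcal G^S)$ plus the contact heat-kernel term, using $|\bar\theta_{\xi\xi}|\le C(|v^R_\xi|+|(v^S)^{-\mb X}_\xi|+\ldots)$.

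For the $\mb Y_i$ with $i=4,5,6$, Cauchy--Schwarz gives $|\mb Y_i|^2\le C\delta_S\int|v^S_\xi||U-\bar U|^4$ (for $\mb Y_6$ we also use $\lambda/\delta_S\cdot\lambda\le C$). Hence $\frac{C}{\delta_S}|\mb Y_i|^2\le C\eps_1^2\mathcal G^S$.

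For $\mb S_1$ we use \eqref{QC1} and \eqref{wave-interactions}. This gives $\|Q_1\|_{L^2}\le C[\delta_S(\delta_R+\delta_C)e^{-C\delta_S t}+\delta_R\delta_Ce^{-Ct}+\delta_C(1+t)^{-5/4}]$, since $\|Q_1^C\|_{L^2}\le C\delta_C(1+t)^{-5/4}$.

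For $\mb S_2$, the $Q_2^I$ part is treated similarly. For the $Q_2^R$ part, we use the Gagliardo--Nirenberg inequality $\|\theta-\bar\theta\|_{L^\infty}\le\|\theta-\bar\theta\|^{1/2}\|(\theta-\bar\theta)_\xi\|^{1/2}$ and then Young's inequality. The gradient factor is absorbed into $\frac1{16}\mb D$, leaving $C\|Q_2^R\|_{L^1}^{4/3}\|\theta-\bar\theta\|^{2/3}$.

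Summing all these estimates gives \eqref{eq-sharp2}.
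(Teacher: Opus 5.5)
Your overall route matches the paper's. You extract $-\mathcal G^R$ from the $u^R_\xi$-terms of $\mb B_5+\mb B_6$ after substituting $\theta^R_t-\sigma\theta^R_\xi=-\frac{\gamma-1}{R}p^Ru^R_\xi$, and leave $O(\delta_0+\varepsilon_1)\mathcal G^S$ shock remainders after subtracting $\mb B_1$. Contact terms go to the heat-kernel integral, and the remaining terms are handled by Young and Gagliardo--Nirenberg.

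However, two of your individual estimates fail as written.

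\textbf{The bound on $\mb Y_6$.} Your claim $|\mb Y_i|^2\le C\delta_S\int|v^S_\xi||U-\bar U|^4$ is false for $i=6$. Since $|a'|=\frac{\lambda}{\delta_S}v^S_\xi$, Cauchy--Schwarz against $\int v^S_\xi\,d\xi=\delta_S$ only gives $|\mb Y_6|^2\le C\frac{\lambda^2}{\delta_S}\int|(v^S)^{-\mb X}_\xi||U-\bar U|^4d\xi$. Hence $\frac{C}{\delta_S}|\mb Y_6|^2\le C\frac{\lambda^2}{\delta_S^2}\varepsilon_1^2\mathcal G^S$.

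This is not of the form $C\varepsilon_1\mathcal G^S$ with $C$ independent of $\lambda$. Indeed $\lambda/\delta_S\gg1$; for example, $\lambda\sim\sqrt{\delta_S}$ gives a factor $\varepsilon_1^2/\delta_S$. Moreover, $\varepsilon_1$ cannot be tied to $\delta_S$, because the bootstrap needs $\delta_0^{1/4}\lesssim\varepsilon_1$.

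Pairing $v^S_\xi$ in $L^1$ with the perturbation in $L^\infty$ wastes a factor $\delta_S$. The paper instead uses the $L^2$-smallness of the perturbation together with $\|v^S_\xi\|_{L^\infty}\le C\delta_S^2$ on one factor, namely $\int|(v^S)^{-\mb X}_\xi||U-\bar U|^2d\xi\le C\delta_S^2\varepsilon_1^2$. This yields $\frac{C}{\delta_S}|\mb Y_6|^2\le C\frac{\lambda^2}{\delta_S^3}\,\delta_S^2\varepsilon_1^2\,\mathcal G^S\le C\varepsilon_1^2\mathcal G^S$, using $\lambda^2\le C\delta_S$.

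\textbf{The rarefaction part of $\mb B_4$.} The term $-\kappa\int a^{-\mb X}\frac{(\theta-\bar\theta)^2}{\theta\bar\theta}\big(\frac{\bar\theta_\xi}{\bar v}\big)_\xi d\xi$ is not $O(\delta_0)\mathcal G^R$ in its rarefaction part. The bound $|\theta^R_{\xi\xi}|\le C|\theta^R_\xi|$ holds only with an $O(1)$ constant, coming from the fixed $\tanh x$ profile. So your argument gives $C\kappa\,\mathcal G^R$. The rarefaction good term cannot absorb this, since its constant depends only on $R,\gamma$ and the states, not on $\kappa$.

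There are two ways to fix this. First, you can integrate by parts once, putting the derivative on $a^{-\mb X}(\theta-\bar\theta)^2/(\theta\bar\theta\bar v)$. With $|\theta^R_\xi|^2\le C\delta_R|v^R_\xi|$ this yields $\frac1{80}\mb D+C\delta_0\mathcal G^R$. Second, you can merge it with the quadratic part of $\mb S_2$ before estimating, since
\[
\kappa\Big(\frac{\bar\theta_\xi}{\bar v}\Big)_\xi+Q_2=\kappa\Big(\frac{\theta^C_\xi}{v^C}\Big)_\xi+\kappa\Big(\frac{(\theta^S)^{-\mb X}_\xi}{(v^S)^{-\mb X}}\Big)_\xi+\bar p\bar u_\xi-p^Ru^R_\xi-p^Cu^C_\xi-(p^S)^{-\mb X}(u^S)^{-\mb X}_\xi
\]
contains no $\theta^R_{\xi\xi}$. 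The paper's displayed chain for $\mb B_4$ passes over this term as well, so the fix is not spelled out there.

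A similar but harmless imprecision occurs in $\mb B_3$. There the $|\bar\theta_\xi|^2|v-\bar v|^2$ part produces $\delta_R\mathcal G^R$ and the contact kernel term, as in \eqref{b3model}, not only $\delta_S\mathcal G^S$.

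\textbf{The sign check.} The step you singled out as the main obstacle goes through directly, with no fallback needed, but your coefficients are off. Up to $O(\delta_0+\varepsilon_1)|u^R_\xi||(v-\bar v,\theta-\bar\theta)|^2$, the $u^R_\xi$-terms of $\mb B_5+\mb B_6$ collect into
\[
-\frac{Ru^R_\xi}{2\bar v^2}\Big[\frac{(\gamma+1)\bar\theta}{\bar v}(v-\bar v)^2-2(v-\bar v)(\theta-\bar\theta)+\frac{\bar v}{\bar\theta}(\theta-\bar\theta)^2\Big].
\]
Thus the $(v-\bar v)^2$ coefficient is $-\frac{(\gamma+1)\bar p}{2\bar v^2}u^R_\xi$, coming from $(\gamma-1)p^R\Phi(v/\bar v)$ plus $\frac{\bar p}{v\bar v}(v-\bar v)^2$. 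The $(\theta-\bar\theta)^2$ coefficient is $-\frac{R}{2\bar v\bar\theta}u^R_\xi$.

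The bracket is positive definite, since its matrix has determinant $(\gamma+1)-1=\gamma>0$. This is the paper's term $Z$, and with $u^R_\xi\sim v^R_\xi>0$ it gives $-C\mathcal G^R$.
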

\begin{proof}
    In the proof, we estimate each term on the left-hand side of \eqref{eq-sharp2}. The estimates for $\mb B_i$ $(i=3,4,5,6)$, $\mb Y_i$ $(i=4,5,6)$, and $\mb S_i$ $(i=1,2)$ are given as follows.
			
	\noindent$\bullet$ {\bf Estimates on the terms $\mb B_i~(i=3,4,5,6)$:}\\
	First, from the definition of $\mb B_3$  and the definition of $a^{-\mb X}$ in \eqref{weight}, we have
			\begin{align*}
					\mb B_3(U) 
					&\leq C \int_\bbr \! |a_\xi^{-\mb X} | |\theta-\bar \theta| \Big(  |(\theta-\bar \theta)_\xi| + |\bar \theta_\xi| |v-\bar v| \Big) d\xi \\
					&\le \frac{1}{80} \mb D +  C\lambda\deltas \int_\bbr \! |(v^S)_\xi^{-\mb X} | |\theta-\bar \theta |^2 d\xi  + C \int_\bbr  |\bar \theta_\xi|^2  |v-\bar v|^2 d\xi \\
					&\le  \frac{1}{80}  \mb D + C\deltas \mathcal{G}^S  + C \int_\bbr  |\bar \theta_\xi|^2  |v-\bar v|^2 d\xi.
			\end{align*}
			Since Lemma \ref{lemma1.2}, \eqref{vc-pe} and \eqref{shock-base} yield
			\begin{align}
				\begin{aligned} \label{b3model}
					& \int_\bbr  |\bar \theta_\xi|^2  |v-\bar v|^2 d\xi \\
					&\quad\le C\int_\bbr \Big[  |(\theta^R)_\xi|^2+|(\theta^C)_\xi|^2+|(\theta^S)^{-\mb X}_\xi|^2\Big] |v-\bar v|^2 \\
					&\quad\le C\deltar \mathcal{G}^R+C\deltas \mathcal{G}^S+ C(\delta_C)^2 (1+t)^{-1}\int_\bbr  e^{-\frac{2 \tilde C|\xi+\sigma t|^2}{1+t}}|v-\bar v|^2 d\xi ,
				\end{aligned}
			\end{align}
			we have
			\[
			\mb B_3(U)  \le  \frac{1}{40}  \mb D + C\deltar \mathcal{G}^R+C\deltas \mathcal{G}^S + C(\delta_C)^2 (1+t)^{-1}\int_\bbr  e^{-\frac{2 \tilde C|\xi+\sigma t|^2}{1+t}}|v-\bar v|^2 d\xi .
			\]
			Likewise, we have
			\begin{align*}
					\mb B_4 &\le C \int_\bbr \Big[  |\theta-\bar \theta| \Big(  |(\theta-\bar \theta)_\xi| + |\bar \theta_\xi| \Big) \Big(  |(\theta-\bar \theta)_\xi| + |\bar \theta_\xi| |v-\bar v| \Big) \Big]d\xi \\
					&\quad + C \int_\bbr \Big( |(\theta-\bar \theta)_\xi| |\bar \theta_\xi| |v-\bar v|  \Big) d\xi 
					+C \int_\bbr |\theta-\bar\theta|^2\Big(  |\bar \theta_{\xi\xi}|+ |(\bar v_\xi,\bar \theta_\xi)|^2  \Big)  d\xi ,
			\end{align*}
			which together with \eqref{smp1} yields
			\begin{align*}
				\begin{aligned}
					\mb B_4
					&\le \frac{1}{80} \mb D  + C \int_\bbr   |\bar \theta_\xi|^2  |(v-\bar v,\theta-\bar\theta)|^2 d\xi \\
					&\le   \frac{1}{80}  \mb D + C\deltar \mathcal{G}^R+C\deltas \mathcal{G}^S  + C(\delta_C)^2 (1+t)^{-1}\int_\bbr  e^{-\frac{2 \tilde C |\xi+\sigma t|^2}{1+t}}  |(v-\bar v,\theta-\bar\theta)|^2 d\xi.
				\end{aligned}
			\end{align*}
    Then, we can estimate $\mb B_5(U)$ as follows. Set 
    \begin{equation*}
        \begin{split}
            J_2 :=  R\left((\theta^R_t-\s \theta^R_\x)+(\theta^C_t-\s \theta^C_\x)- \sigma (\theta^S)^{-\mathbf{X}} _{\xi}\right)\Phi\left( \frac{v}{\bar{v}} \right) - \frac{\bar{p}\bar u_\x}{v \bar{v}} (v - \bar{v})^2.
        \end{split}
    \end{equation*}
    Since
\[
\bar u_\xi=u_\xi^R+u_\xi^C+u_\xi^S,
\]
and by $\eqref{rarexi}_3$, we have
\[
\theta_t^R-\sigma\theta_\x^R
=-\frac{\gamma-1}{R}p^R u_\xi^R.
\]
Substituting this relation into $J_2$, we obtain
\begin{align*}
J_2 & =\left[-u^R_\xi\Big((\gamma-1)p^R\Phi(\frac{v}{\bar v})+\frac{\bar p(v-\bar v)^2}{v\bar v}\Big)\right]+\left[R(\theta_t^C-\sigma \theta_\x^C)\Phi(\frac{v}{\bar v})-\frac{\bar p u^C_\xi}{v\bar v}(v-\bar v)^2\right] \\
&\quad \ +\left[-R\sigma (\theta^S)^{-\mb X}_\xi \Phi(\frac{v}{\bar v})-\frac{(u^S)^{-\mb X}_\xi \bar p }{v\bar v}(v-\bar v)^2\right]:=J_{21}+J_{22}+J_{23}.
\end{align*}
Using the fact that $|v-\bar v|\le C\eps_1$ and the properties
\[
\Phi(1)=\Phi'(1)=0,\quad \Phi''(1)=1,
\]
we obtain the expansion
\beq\label{app-phi}
\Phi(\frac{v}{\bar v}) =  \frac{(v-\bar v)^2}{2\bar v^2} + O(|v-\bar v|^3).
\eeq
Moreover,
\begin{align*}
\begin{aligned}
|\bar p - p^R| & \le C( |\bar v - v^R| +|\bar \theta - \theta^R| ) \\
&\le C(|v^S - v^*| +|v^C - v_*| +|\theta^S - \theta^*| +|\theta^C - \theta_*| )  \le C(\deltas +\delta_C), 
\end{aligned}
\end{align*}
which, together with the above expansion, gives
\begin{align*}
J_{21} & \le -\frac{(\gamma+1)\bar p}{2\bar v^2}u^R_\xi (v-\bar v)^2 + C(\delta_0 +\eps_1 ) |u^R_\xi| |v-\bar v|^2.
\end{align*}
Using $\eqref{vcex}_3$, we have
\[
J_{22} \le C( |u^C_\xi|+ |\theta^C_{\xi\xi}| + |\theta^C_\xi||v^C_\xi| ) (v-\bar v)^2,
\]
which, together with \eqref{vc-pe} and \eqref{QC1}, yields
\[
J_{22} \le C \dc(1+t)^{-1}e^{-\frac{\tilde C |\xi+\sigma t|^2}{1+t}}(v-\bar v)^2.
\]
Using \eqref{shock-vu}, \eqref{theta-s}, \eqref{sm1} and 
\begin{align} 
\begin{aligned}\label{perror}
|\bar p - p^*| & \le C( |\bar v - v^*| +|\bar \theta - \theta^*| ) \\
&\le C(|v^R - v_*| + |v^S - v^*| +|v^C - v^*| +|\theta^R - \theta_*|+|\theta^S - \theta^*| +|\theta^C - \theta^*| ) \\
& \le C(\deltar+\deltas +\delta_C) \le C\delta_0, 
\end{aligned}
\end{align}
we have
\[
J_{23} \le \frac{(\gamma+1)p^*\sigma^*}{2(v^*)^2}(v^S)^{-\mb X}_\xi (v-\bar v)^2+ C(\delta_0 +\eps_1 ) |(v^S)^{-\mb X}_\xi | |v-\bar v|^2.
\]
Thus, we get 
  \begin{equation*}
            \begin{aligned}
         J_{2} &\leq \frac{(\gamma+1)p^*\sigma^*}{2(v^*)^2}(v^S)^{-\mb X}_\xi (v-\bar v)^2+ C(\delta_0 +\eps_1 ) |(v^S)^{-\mb X}_\xi | |v-\bar v|^2\\
         &\quad -\frac{(\gamma+1)\bar p}{2\bar v^2}u^R_\xi (v-\bar v)^2 + C(\delta_0 +\eps_1 ) |u^R_\xi| |v-\bar v|^2+C \dc(1+t)^{-1}e^{-\frac{\tilde C |\xi+\sigma t|^2}{1+t}}(v-\bar v)^2,
          \end{aligned}		
        \end{equation*}
        which means
    \begin{equation}\label{J2}
        \begin{aligned}
        \mb B_5=&\int_{\mathbb{R}} a^{-\mathbf{X}}J_{2} d \xi-\int_{\mathbb{R}} a^{-\mathbf{X}} \frac{(\gamma+1)p^*\sigma^*}{2(v^*)^2}(v^S)_\xi^{\mathbf{-\mathbf{X}}} (v - \bar{v})^2 d\x \\
         \leq
        &C\left(\delta_0+\varepsilon_{1}\right) \int_{\mathbb{R}} a^{-\mathbf{X}}\left( |(v^{S})_{\xi}^{-\mathbf{X}}|+|u^R_\xi|  \right)|(v-\bar{v}, \theta-\bar{\theta})|^{2} d \xi\\
        &- \int_{\mathbb{R}} a^{-\mathbf{X}} \frac{(\gamma+1)\bar p}{2\bar v^2}u^R_\xi (v-\bar v)^2 d \xi+C\dc \int_{\mathbb{R}} a^{-\mathbf{X}} (1+t)^{-1}e^{-\frac{\tilde C |\xi+\sigma t|^2}{1+t}}|v-\bar v|^2 d \xi.
    \end{aligned}
 \end{equation}
   Similarly, for $\mb B_6$, we set 
   $$
   J_3:=\Big[ \frac{R}{\gamma-1}\Big((\theta_t^R-\sigma \theta_\x^R)+(\theta_t^C-\sigma \theta_\x^C)-\sigma (\theta^S)^{-\mb X}_\xi \Big) \Phi\left(\frac{\theta}{\bar \theta}\right) - \frac{\bar u_\xi}{\theta}(\theta-\bar \theta)(p-\bar p) +\bar p \bar u_\xi \frac{(\theta-\bar\theta)^2}{\theta\bar\theta}  \Big].
   $$  
   Then, we obtain
    \begin{equation}\label{J3}
    \begin{aligned}
    \mb B_6&=\int_{\mathbb{R}} a^{-\mathbf{X}}J_{3} d \xi -\int_{\mathbb{R}} a^{-\mathbf{X}} \frac{\sigma^* (v^S)^{-\mb X}_\xi}{2v^*\theta^*}\left(R(\theta-\bar{\theta})-2 p^*(v-\bar{v})\right)(\theta-\bar{\theta}) d\x \\
    &\leq C\left(\delta_0+\varepsilon_{1}\right) \int_{\mathbb{R}} a^{-\mathbf{X}}\left( |(v^{S})_{\xi}^{-\mathbf{X}}|+|u^R_\xi|  \right)|(v-\bar{v}, \theta-\bar{\theta})|^{2} d \xi\\
    &\quad +C \int_{\mathbb{R}} a^{-\mathbf{X}}u^R_\x \Big[ -\frac{R}{2\bar v\bar\theta}(\theta-\bar \theta) + \frac{\bar p}{\bar v \bar \theta} (v-\bar v)\Big] (\theta-\bar \theta) d \xi\\
    &\quad +C\dc \int_{\mathbb{R}} a^{-\mathbf{X}} (1+t)^{-1}e^{-\frac{\tilde C |\xi+\sigma t|^2}{1+t}}|(v-\bar v,\theta-\bar\theta)|^2 d \xi.
    \end{aligned}
    \end{equation}
    Therefore, combining the estimates \eqref{J2} and \eqref{J3} together with $\bar p=\frac{R\bar\theta}{\bar v}$, we obtain
    \begin{align*}
        \mb B_5(U)+\mb B_6(U) &=
        \int_{\mathbb{R}} a^{-\mathbf{X}}\left(J_{2}+J_{3}\right) d \xi -\mb B_1(U)\\ 
        &\leq C\left(\delta_0+\varepsilon_{1}\right) \int_{\mathbb{R}} a^{-\mathbf{X}}\left( |(v^{S})_{\xi}^{-\mathbf{X}}|+|u^R_\xi|  \right)|(v-\bar{v}, \theta-\bar{\theta})|^{2} d \xi\\
        &\quad+C\dc \int_{\mathbb{R}} a^{-\mathbf{X}} (1+t)^{-1}e^{-\frac{\tilde C |\xi+\sigma t|^2}{1+t}}|(v-\bar v,\theta-\bar\theta)|^2 d \xi\\
        &\quad -  \int_\bbr \!a^{-\mb X}   \frac{R u^R_\xi }{2\bar v^2}\Big[\frac{\gamma \bar\theta}{\bar v}(v-\bar v)^2+\Big(\sqrt{\frac{\bar\theta}{\bar v}}(v-\bar v) - \sqrt{\frac{\bar v}{\bar\theta}}(\theta-\bar\theta)\Big)^2\Big] d\xi\\
        &:= C\left(\delta_0+\varepsilon_{1}\right) \int_{\mathbb{R}} a^{-\mathbf{X}}\left( |(v^{S})_{\xi}^{-\mathbf{X}}|+|u^R_\xi|  \right)|(v-\bar{v}, \theta-\bar{\theta})|^{2} d \xi \\
        &\quad+C\dc \int_{\mathbb{R}} a^{-\mathbf{X}} (1+t)^{-1}e^{-\frac{\tilde C |\xi+\sigma t|^2}{1+t}}|(v-\bar v,\theta-\bar\theta)|^2 d \xi+ Z.
    \end{align*}
    We now derive the simpler form $\mathcal{G}^R$ from the above good term $Z$. Using
        \[
        2(v-\bar v)(\theta-\bar\theta) \le  \frac{ 2\bar\theta}{\bar v} (v-\bar v)^2 + \frac{\bar v}{2\bar\theta}(\theta-\bar\theta)^2,
        \]
    and $u^R_\xi \sim v^R_\xi >0$ from Lemma \ref{lemma1.2} (1), we have
    \[
     Z  \leq \int_\bbr \!a^{-\mb X}   \frac{R u^R_\xi }{2\bar v^2}\Big[\frac{(\gamma-1) \bar\theta}{\bar v}(v-\bar v)^2 + \frac{\bar v}{2\bar\theta}(\theta-\bar\theta)^2 \Big] d\xi  \leq -C \mathcal{G}^R(U).
    \]
     Using the smallness of $\delta_1$ and $\eps_1$, we have
         \begin{equation*}
            \mb B_5 + \mb B_6 \le   -\frac{2C}{3}\mathcal{G}^R + C(\delta_0+\eps_1)  \mathcal{G}^S + C \dc(1+t)^{-1} \int_\bbr  e^{-\frac{\tilde C |\xi+\sigma t|^2}{1+t}} |(v-\bar v,\theta-\bar\theta)|^2 d\xi.
         \end{equation*}
			Therefore, combining the above estimates, we obtain
			\beq\label{bis}
            \begin{aligned}
			\sum_{i=3}^6 \mb B_i  &\le  \frac{1}{20}\mb D + C(\delta_0+\eps_1) \mathcal{G}^S-\frac{C}{2}\mathcal{G}^R   \\
            &\quad + C\delta_C (1+t)^{-1}\int_\bbr  e^{-\frac{\tilde C|\xi+\sigma t|^2}{1+t}} |(v-\bar v,\theta-\bar\theta)|^2 d\xi.
            \end{aligned}
			\eeq

    \noindent$\bullet$ {\bf Estimates on the terms $\mb Y_i~(i=4, 5, 6)$}\\
       Using \eqref{theta-s}, we have
\[
\di |(\mb Y_4, \mb Y_5)|\le C\int_\bbr |(v^S)^{-\mb{X}}_\xi| |(v-\bar v , \theta-\bar\theta)|^2d\xi.
\]
In addition, since Lemma \ref{lemma1.3} and \eqref{apri-ass} yield
\[
 |(\mb Y_4, \mb Y_5)|\le C\deltas^2 \int_\bbr |(v-\bar v , \theta-\bar\theta)|^2d\xi \le C\deltas^2\eps_1^2,
\]
we have
\[
\frac{C}{\deltas}|(\mb{Y}_4, \mb Y_5) |^2 \le C\deltas \eps_1^2\mathcal{G}^S.
\]
Similarly, we have
\begin{align*}
 \frac{C}{\deltas}|\mb{Y}_6|^2 &\le \frac{C}{\deltas} \left(\int_\bbr  |a^{-\mb{X}}_\xi| |(v-\bar v, u-\bar u, \theta-\bar\theta)|^2 d\xi\right)^2\\ 
&\le \frac{C\lam^2}{\deltas^3} \left(\int_\bbr |(v^S)_\xi|  |(v-\bar v, u-\bar u, \theta-\bar\theta)|^2 d\xi\right)^2\\
&\le \frac{C\lam^2}{\deltas} \|(v-\bar v, u-\bar u, \theta-\bar\theta)\|_{L^2(\bbr)}^2 \int_\bbr |(v^S)^{-\mb{X}}_\xi| |(v-\bar v, u-\bar u, \theta-\bar\theta)|^2d\xi\\
&\le C\eps_1^2  \int_\bbr |(v^S)^{-\mb{X}}_\xi| |(v-\bar v, u-\bar u, \theta-\bar\theta)|^2 d\xi\le C\eps_1^2 \mathcal{G}^S.
\end{align*}
Thus,
\beq\label{yis}
\frac{C}{\deltas}\sum_{i=4}^6 |\mb{Y}_i|^2 \le  C \eps_1\mathcal{G}^S.
\eeq

\noindent$\bullet$ {\bf Estimates on the terms $\mb S_i~(i=1,2)$:}\\
First, by \eqref{QR}, for $Q_2^R$, we have
\beq\label{Q-est}
\begin{aligned}
|Q^R_2| &\le C \Big[|\theta^R_{\xi\xi}|+|\theta^R_{\xi}||v^R_{\xi}|\Big]\\
& \le C  \Big[\big|\theta^R_{\xi\xi}\big|+\big|\big(v^R_{\xi},\theta^R_{\xi}\big)\big|^2\Big],
\end{aligned}
\eeq
and then
$$
\|Q^R_2\|_{L^1(\bbr)}\leq C\Big[\big\|\theta^R_{\xi\xi}\big\|_{L^1}+\big\|\big(v^R_{\xi}, \theta^R_{\xi}\big)\big\|_{L^2}^2\Big].
$$
Then, by Lemma \ref{lemma1.2}, we have
\[
\big\|\theta^R_{\xi\xi}\big\|_{L^1}
\leq
\begin{cases}
C\deltar, &\quad \mbox{if } 1+t\leq\deltar^{-1},\\
C\frac{1}{1+t}, &\quad \mbox{if } 1+t\geq\deltar^{-1},
\end{cases}
\]
and
\beq\label{dr2}
\big\|\big(v^R_{\xi},\theta^R_{\xi}\big)\big\|_{L^2}
\leq
\begin{cases}
C\deltar, &\quad \mbox{if } 1+t\leq\deltar^{-1},\\
C\deltar^{1/2}\frac{1}{(1+t)^{1/2}},
&\quad \mbox{if } 1+t\geq\deltar^{-1}.
\end{cases}
\eeq
Therefore,
\beq\label{v21}
\int_0^\infty \|Q_2^R\|_{L^1}^{4/3}dt \le C\big(\deltar^{1/3}+\deltar^{5/3}\big)\le C\deltar^{1/3}.
\eeq
By \eqref{QC1}, it holds that
\beq\label{vc-21}
\|Q_1^C\|_{L^2}\le C \delta_C (1+t)^{-\frac 54}.
\eeq
Therefore, using Young's inequality, we obtain
\begin{align*}
&\mb S_1+\mb S_2=-\int_\bbr a^{-\mb X}Q_1(u-\bar u)  d\x-\int_\bbr \!a^{-\mb X} Q_2\Big[(1-\frac{\bar\theta}{\theta}) +\frac{(\theta-\bar\theta)^{2}}{\theta\bar\theta}\Big]d\xi \\
&\leq C\int_\bbr \big(|Q^I_1|+|Q_1^C|\big)|u-\bar u| d\x+ C\int_\bbr  \big(|Q^I_2|+|Q_2^R|\big)|\theta-\bar \theta|d\x\\
&\le C \big(\|Q_1^I\|_{L^2}+\|Q_1^C\|_{L^2}\big) \|u-\bar u\|_{L^2}+ C \|Q_2^I\|_{L^2} \|\theta-\bar \theta\|_{L^2}+C\|Q_2^R\|_{L^1(\bbr)} \|\theta-\bar \theta\|_{L^\infty(\bbr)} ,
\end{align*}
which, together with Lemma \ref{lemma2.2} and \eqref{vc-21}, yields
\begin{align}\label{S12-e}
\begin{aligned}
&\mb S_1+\mb S_2\\
&\leq C\big[\deltas (\deltar+\delta_C) e^{-C \deltas t}+\deltar\delta_Ce^{-Ct}+\delta_C (1+t)^{-\frac 54}\big]\big(\|u-\bar u\|_{L^2}+\|\theta-\bar \theta\|_{L^2}\big)\\
&\quad + C\| Q_2^R\|_{L^1}\|(\theta-\bar\theta)_\x\|_{L^2}^{\frac 12}\| \theta-\bar\theta\|_{L^2}^{\frac 12}\\
&\leq C\big[\deltas (\deltar+\delta_C) e^{-C \deltas t}+\deltar\delta_Ce^{-Ct}+\delta_C (1+t)^{-\frac 54}\big]\|(u-\bar u,\theta-\bar \theta)\|_{L^2}\\
&\quad +\frac{1}{20} \mb D  +C\| Q_2^R\|_{L^1}^{\frac 43}\| \theta-\bar\theta\|_{L^2}^{\frac 23}.
\end{aligned}
\end{align}
     \noindent$\bullet$ {\bf Conclusion:}\\
			Using $\eqref{yis}$, $\eqref{S12-e}$ and $\eqref{bis}$, we have
				\begin{align*}
					&\frac{C}{\deltas}\sum_{i=4}^6 |\mb{Y}_i|^2  +\sum_{i=3}^6 \mb B_i +\mb S_1+\mb S_2   -\frac{1}{8} \mb D\\
					&\leq  \frac{1}{20}\mb D + C(\delta_0+\eps_1) (\mathcal{G}^R+ \mathcal{G}^S )  + C\delta_C (1+t)^{-1}\int_\bbr  e^{-\frac{\tilde C|\xi+\sigma t|^2}{1+t}} |(v-\bar v,\theta-\bar\theta)|^2 d\xi\\
					&\quad -\frac{C}{2}\mathcal{G}^R 
					  +C\big[\deltas (\deltar+\delta_C) e^{-C \deltas t}+\deltar\delta_Ce^{-Ct}+\delta_C (1+t)^{-\frac 54}\big]\|(u-\bar u,\theta-\bar \theta)\|_{L^2}\\
					&\quad +\frac{1}{20} \mb D  +C\|Q_2^R\|_{L^1}^{\frac 43}\|(u-\bar u, \theta-\bar\theta)\|_{L^2}^{\frac 23}-\frac{1}{8} \mb D\\
					&\leq -\frac{C}{2}\mathcal{G}^R + C(\delta_0+\eps_1)  \mathcal{G}^S + C \dc(1+t)^{-1} \int_\bbr  e^{-\frac{\tilde C|\xi+\sigma t|^2}{1+t}} |(v-\bar v,\theta-\bar\theta)|^2 d\xi\\
					&\quad +C\big[\deltas (\deltar+\delta_C) e^{-C \deltas t}+\deltar\delta_Ce^{-Ct}+\delta_C (1+t)^{-\frac 54}\big]\|(u-\bar u,\theta-\bar \theta)\|_{L^2}\\
					&\quad  +C\|Q_2^R\|_{L^1}^{\frac 43}\|\theta-\bar\theta\|_{L^2}^{\frac 23},
			\end{align*}
			with the smallness of $\delta_1$ and $\eps_1$. Therefore, we complete the proof of $\eqref{eq-sharp2}$.
\end{proof}

Before proceeding with the formal proof, we also need to establish an estimate to control the stability of the viscous contact wave. Since the proof of Lemma \ref{cw-lemma} follows the same argument as in \cite{KVW25} that handles the stability of the viscous contact wave with shift, we omit the detailed proof here.

\begin{lemma}\label{cw-lemma}
	It holds that
	\beq\label{cwe}
	\begin{aligned}
		&\int_0^t(1+\tau)^{-1}\int_\bbr  e^{-\frac{\tilde  C|\xi+\sigma \tau|^2}{1+\tau}}|(v-\bar v, u-\bar u, \theta-\bar \theta)|^2 d\xi d\tau \\
		 & \le  C\sup_{t\in[0,T]}\|U-\bar U\|^2_{L^2(\bbr)}  +C\deltas\int_0^t|\dot{\mb X}(\tau)|^2 d\tau+C\int_0^t (\mathcal{G}^S+\mathcal{G}^R+ \mb D) d\tau  \\
		  &\quad+ C\int_0^t\Big(\|(u-\bar u)_\xi \|^2_{L^2(\bbr)} +\|(v-\bar v)_\xi \|^2_{L^2(\bbr)} \Big)d\tau +C\delta_0^{\frac13}.
	\end{aligned}
	\eeq
\end{lemma}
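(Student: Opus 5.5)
We follow the heat-kernel weighted estimate of Huang--Li--Matsumura and Kang--Vasseur--Wang. Set
\[
w(t,\xi):=(1+t)^{-\frac12}e^{-\frac{\tilde C|\xi+\sigma t|^2}{1+t}},\qquad g(t,\xi):=\int_{-\infty}^{\xi} w(t,\eta)\,d\eta .
\]
Then $\|g\|_{L^\infty}\le C$ uniformly in $t$, and a direct computation gives
\[
g_t-\sigma g_\xi=-\tilde C'\,\frac{\xi+\sigma t}{1+t}\,w ,
\]
with $|g_t-\sigma g_\xi|\le C(1+t)^{-1}e^{-\frac{\tilde C|\xi+\sigma t|^2}{2(1+t)}}$. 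The key fact is the heat-kernel lemma: for a suitable choice of the constant in $w$,
\[
\int_0^t (1+\tau)^{-1}\!\int_\bbr e^{-\frac{\tilde C|\xi+\sigma\tau|^2}{1+\tau}} f^2\,d\xi\,d\tau \le C\sup_\tau\|f\|_{L^2}^2+C\int_0^t\|f_\xi\|_{L^2}^2\,d\tau .
\]
This is proved by multiplying by $f\,g$, integrating, and using the identity $\int g_\xi f^2 = -2\int g f f_\xi$.

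This lemma cannot be applied directly to each component, because $\int_0^t\|(u-\bar u)_\xi\|^2$ and $\int_0^t\|(v-\bar v)_\xi\|^2$ appear on the right-hand side only with constant $C$. That is allowed, since the statement permits these terms. So we use the lemma for each of $\phi=v-\bar v$, $\psi=u-\bar u$ and $\vartheta=\theta-\bar\theta$ separately, with $f_\xi$ replaced by the corresponding derivative.

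However, the naive version of the lemma for a general $f$ requires $\frac{d}{dt}\int f^2 g$, which involves the equation for $f$. The cleaner route is to work with the $L^2$ energy identity from Lemma~\ref{lem-relative}, but with the weight $a^{-\mb X}$ replaced by $a^{-\mb X}(1+g)$, or simply with weight $g$ added. Carry out the same computation as in Lemma~\ref{lem-relative}. The extra terms are of two kinds.

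\emph{Terms from $(\partial_t-\sigma\partial_\xi)g$ times $\bar\theta\eta(U|\bar U)$.} These have the sign of $-(\xi+\sigma t)w\,\eta$ and are not directly signed. The remedy is the standard trick: replace $g$ by a function solving $g_t-\sigma g_\xi=\tfrac12 g_{\xi\xi}$-type relations, i.e.\ use $w$ as the heat kernel. The genuinely useful term is the flux term $g_\xi\cdot(\text{flux})$. In the relative entropy identity the flux is $-\sigma\bar\theta\eta+(p-\bar p)(u-\bar u)+\ldots$. Because the contact wave travels at speed $0$ in $x$, i.e.\ speed $-\sigma$ in $\xi$, the quadratic form $\sigma\bar\theta\eta-(p-\bar p)(u-\bar u)$ is positive definite for small waves, since $|\sigma|$ exceeds the sound-speed coupling only marginally. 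This holds up to a degenerate direction, which is an acoustic mode. Therefore $g_\xi$ times the flux produces $c\,w\,|(\phi,\psi,\vartheta)|^2$ minus the contributions along the acoustic direction $\phi+\psi/\sigma^*$. To remove the degeneracy, add a second multiplier: $\int g\,\phi\,\psi$-type cross terms. Using $\eqref{pereq1}_{1,2}$, these yield $w(\psi^2-c^2\phi^2)$ plus derivative terms bounded by $\|(\phi_\xi,\psi_\xi)\|^2$ and $\mb D$.

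\emph{Remaining terms.} These involve $\dot{\mb X}$, the shock and rarefaction derivatives, $\mb D$, and the errors $Q_i$. They are bounded by $\delta_S|\dot{\mb X}|^2+\mathcal G^S+\mathcal G^R+\mb D$ plus $C\delta_0^{1/3}$, exactly as in Lemmas~\ref{lem-sharp1} and~\ref{lem-sharp2}, using $\|g\|_\infty\le C$.

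Finally, integrate in time and bound the boundary terms $\int g\,\eta$ and $\int g\,\phi\psi$ by $C\sup_t\|U-\bar U\|^2_{L^2}$. The main obstacle is the sign or positivity of the combined $w$-quadratic form. This requires choosing the cross-term coefficient so that the $\phi$-, $\psi$- and $\vartheta$-components all become coercive. The $\vartheta$-component can always be obtained from $\mb D$ by the heat-kernel lemma above. Where coercivity fails, we fall back on the freedom to pay $\|(\phi,\psi)_\xi\|^2$.
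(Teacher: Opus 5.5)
There is a genuine gap. Your plan never separates the diffusive (entropy/contact) mode from the acoustic modes, and without that separation no step produces a coercive weighted term.

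\emph{The heat-kernel inequality as you state it is false.} Take $f=L^{-1/2}\varphi\big((\xi+\sigma t)/L\big)$ with $L=\sqrt{1+t}\,\log(2+t)$. Then $\sup_t\|f\|_{L^2}<\infty$ and $\int_0^\infty\|f_\xi\|_{L^2}^2\,dt<\infty$, yet the left-hand side diverges like $\log\log t$. The correct Huang--Li--Matsumura inequality, with $g=\int_{-\infty}^\xi W$, carries the extra term $C\int_0^t\langle(\partial_\tau-\sigma\partial_\xi)f,\,fg^2\rangle\,d\tau$, as you partly acknowledge. It is only useful when $f$ solves a parabolic equation.

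\emph{So it cannot be applied to $\phi$, $\psi$ or $\vartheta$ separately.} In particular, the $\vartheta$-component cannot be ``obtained from $\mathbf D$''. The term $p\,\psi_\xi$ in the energy equation produces $\int p\,\psi\,\vartheta\,g\,W\,d\xi$. Its weight $W\sim(1+t)^{-1/2}$ is much larger than the weight $W^2\sim(1+t)^{-1}$ on the left-hand side, so it cannot be absorbed.

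\emph{The weighted relative-entropy step also fails.} You count $\sigma$ twice, because $-\sigma g_\xi\,\bar\theta\eta$ is already contained in $(\partial_t-\sigma\partial_\xi)g$. In the frame where the contact is at rest, the only flux contribution is $g_\xi(p-\bar p)(u-\bar u)$, which is indefinite. More fundamentally, no bound with weight $g_\xi=W$ can hold for the diffusive mode: for a solution of the heat equation with nonzero mass, $\int_0^\infty\!\int W h^2\,d\xi\,dt$ diverges logarithmically. Hence no choice of cross-term coefficient makes all of $\phi,\psi,\vartheta$ coercive at that weight. Paying $\|(\phi,\psi)_\xi\|^2$ cannot substitute for coercivity either, by the counterexample above.

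The missing idea is the mode decomposition of the Huang--Li--Matsumura method, which is the argument the paper's omitted proof refers to through \cite{KVW25}. It has three steps.
\begin{enumerate}
\item Apply the heat-kernel inequality to an entropy-type variable, e.g.\ the entropy perturbation $h=s(v,\theta)-s(\bar v,\bar\theta)\approx\frac{1}{\bar\theta}\big(\frac{R}{\gamma-1}\vartheta+\bar p\,\phi\big)$. Since $\frac{R}{\gamma-1}\theta_t+p\,v_t=(\kappa\theta_x/v)_x$ in the frame of the contact, the $\psi_\xi$ couplings from the mass and energy equations cancel. Thus $(\partial_t-\sigma\partial_\xi)h$ is a diffusion term plus shift and wave-interaction errors. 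Its pairing with $hg^2$ is then controlled by $\mathbf D$, $\|(v-\bar v)_\xi\|^2$ and the other terms on the right of the lemma.
\item Treat the acoustic pair $(p-\bar p,\psi)$ separately. Multiply the momentum equation by $(p-\bar p)$ times a bounded primitive of the Gaussian weight, e.g.\ $\int_{-\infty}^\xi W^2\,d\eta$. Using $(\partial_t-\sigma\partial_\xi)(p-\bar p)=-\frac{\gamma p}{v}\psi_\xi+\frac{(\gamma-1)\kappa}{v}\big(\frac{\theta_\xi}{v}-\frac{\bar\theta_\xi}{\bar v}\big)_\xi+\cdots$, this yields $\frac12\int W^2\big[(p-\bar p)^2+\frac{\gamma p}{v}\psi^2\big]$. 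This holds up to a time derivative and terms bounded by $\mathbf D$ and $\|\psi_\xi\|^2$.
\item Recover $\phi$ and $\vartheta$ from $h$ and $p-\bar p$.
\end{enumerate}
Your cross term $\int g\,\phi\,\psi$ is close in spirit to the second step. However, with $\phi$ in place of the pressure perturbation it leaves $\int\frac{R}{v}\,g_\xi\,\phi\,\vartheta\,d\xi$. That term couples to the diffusive mode at weight $W$ and cannot be closed.
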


\begin{remark}
	In the proof of Lemma \ref{lem-zvh}, Lemma \ref{cw-lemma} was used  to control the term involving only the $v$ and $\theta$ variables. However, Lemma  \ref{cw-lemma} also provides the estimate on the $u$ variable: $\di \int_0^t(1+\tau)^{-1}\int_\bbr  e^{-\frac{\tilde C|\xi+\sigma \tau|^2}{1+\tau}}|u-\bar u|^2 d\xi d\tau$, which will be used in Section \ref{sec-vu}.
\end{remark}

Then, using Lemma \ref{lem-sharp1} and Lemma \ref{lem-sharp2}, with the smallness of $\delta_1, \eps_1$,  there exists a constant $C>0$ such that
\begin{align*}
		&\frac{d}{dt}\int_{\bbr} a^{-\mb X}\bar \theta \eta\big(U|\bar U \big) d\xi  \\
		&\le -\frac{\deltas}{4M} |\dot{\mb X}|^2  -\frac{C}{2}(\mathcal{G}^S+\mathcal{G}^R) -\frac{1}{8} \mb D  + C ( \deltar^{4/3}+ \delta_C^{4/3})  \deltas^{4/3} e^{-C\deltas t}   \\
		&\quad + C\delta_C (1+t)^{-1}\int_\bbr  e^{-\frac{\tilde C|\xi+\sigma t|^2}{1+t}} |(v-\bar v,\theta-\bar\theta)|^2 d\xi +C (\eps_1+\delta_0) \| (u-\bar u)_\x\|^2_{L^2(\bbr)}\\
		&\quad + C\big[\deltas (\deltar+\delta_C) e^{-C \deltas t}+\deltar\delta_Ce^{-Ct}+\delta_C (1+t)^{-\frac 54}\big] \|(u-\bar u,\theta-\bar \theta)\|_{L^\infty(0,T;L^2(\bbr))}  \\
		&\quad   +C\|Q_2^R\|_{L^1}^{\frac 43}  \|\theta-\bar \theta\|_{L^\infty(0,T;L^2(\bbr))}^{\frac 23}.
\end{align*}
Integrating the above inequality over $[0,t]$ for any $t\le T$, we have
\begin{align*}
		&\sup_{t\in[0,T]}\int_{\bbr}  \eta\big(U|\bar U \big) d\xi +\deltas\int_0^t|\dot{\mb{X}}|^2 d\tau +\int_0^t (\mathcal{G}^S+\mathcal{G}^R+ \mb D) d\tau\\
		&\quad\le C\int_{\bbr} \eta\big(U_0|\bar U(0,\xi) \big) d\xi  + C ( \deltar^{4/3}+ \delta_C^{4/3})  \deltas^{1/3} + C (\eps_1+\delta_0)\int_0^t \| (u-\bar u)_\x\|^2_{L^2(\bbr)} d\tau\\
		&\qquad   +C(\deltar+\delta_C) \|(u-\bar u,\theta-\bar \theta)\|_{L^\infty(0,T;L^2(\bbr))}+C\deltar^{1/3}  \|\theta-\bar \theta\|_{L^\infty(0,T;L^2(\bbr))}^{\frac 23}    \\
		&\qquad  +C\delta_C \int_0^t(1+\tau)^{-1}\int_\bbr  e^{-\frac{\tilde C|\xi+\sigma \tau|^2}{1+\tau}}|(v-\bar v,\theta-\bar \theta)|^2 d\xi d\tau .
\end{align*}
Then, using Young's inequality with the fact that
$$
\|U-\bar U\|^2_{L^2(\bbr)} \sim \int_{\bbr}  \eta\big(U|\bar U \big) d\xi, \quad \forall t\in[0,T],
$$
we have
\begin{align}
	\begin{aligned} \label{last-ecom}
		&\sup_{t\in[0,T]}\|U(t,\cdot)-\bar U(t,\cdot)\|^2_{L^2(\bbr)} +\deltas\int_0^t|\dot{\mb{X}}|^2 d\tau +\int_0^t (\mathcal{G}^S+\mathcal{G}^R+ \mb D) d\tau\\
		&\quad\le \|U_0(\cdot)-\bar U(0,\cdot)\|^2_{L^2(\bbr)} + C\delta_0^{1/2}+C (\eps_1+\delta_0)\int_0^t \| (u-\bar u)_\x(\tau,\cdot)\|^2_{L^2(\bbr)} d\tau \\
		&\qquad  + C\delta_0 \int_0^t(1+\tau)^{-1}\int_\bbr  e^{-\frac{\tilde C|\xi+\sigma \tau|^2}{1+\tau}}|(v-\bar v,\theta-\bar \theta)|^2 d\xi d\tau .
	\end{aligned} 
\end{align}
Finally, using Lemma \ref{cw-lemma} with $\delta_1\ll1$, we have
\begin{align*}
		&\sup_{t\in[0,T]}\|U(t,\cdot)-\bar U(t,\cdot)\|^2_{L^2(\bbr)} +\deltas\int_0^t|\dot{\mb{X}}|^2 d\tau +\int_0^t (\mathcal{G}^S+\mathcal{G}^R+ \mb D) d\tau\\
		&\quad\le \|U_0(\cdot)-\bar U(0,\cdot)\|^2_{L^2(\bbr)} + C\delta_0^{1/2} +C(\eps_1+\delta_0)\int_0^t\|(v-\bar v, u-\bar u)_\xi(\tau,\cdot)\|^2_{L^2(\bbr)}  d\tau  ,
\end{align*}
which completes the proof of Lemma \ref{lem-zvh}.

\section{Proof of Proposition \ref{prop2} and bootstrap argument}\label{sec-vu}
\setcounter{equation}{0}
 In this section, we complete the proof of Proposition \ref{prop2} and close the bootstrap argument. Before proving Proposition \ref{prop2}, we first introduce the following notations
\begin{equation}\label{per}
    \phi(t,\xi):=v(t,\xi)-\bar v(t,\xi),\quad  \psi (t,\xi):=u(t,\xi)-\bar u(t,\xi), \quad \vartheta(t,\xi) :=\theta(t,\x)-\bar \theta(t,\xi),
\end{equation}
and 
\[
W(t,\x):=(1+t)^{-\frac12} e^{-\frac{\beta|\xi+\s t|^2}{1+t}}, \quad \beta :=\frac{\tilde C}{2}.
\]
Then, the perturbation equations can be written as follows:
\beq\label{per-system}
\begin{cases}
	\di \phi_t-\s \phi_\xi-\dot{\mb X}(t)(v^S)^{-\mb X}_\xi-\psi_\x=0,\\[3mm]
	\di \psi_t-\s \psi_\xi-\dot{\mb X}(t)(u^S)^{-\mb X}_\xi+(p-\bar p)_\x=-Q_1,\\[3mm]
	\di \frac{R}{\gamma-1}\vartheta_t-\frac{R\s}{\gamma-1} \vartheta_\xi
	-\frac{R}{\gamma-1}\dot{\mb X}(t)(\theta^S)^{-\mb X}_\xi
	+(pu_\x-\bar p\bar u_\x)
	=\kappa\left(\frac{ \theta_\x}{ v}-\frac{\bar \theta_\x}{\bar v}\right)_\xi-Q_2.
\end{cases}
\eeq
Based on the perturbation equations \eqref{per-system} and Lemma \ref{lem-zvh}, we establish the full energy estimates in three steps. ‌We first derive the estimates for the first-order derivatives, then establish the second-order derivative estimates, and finally obtain the space-time estimates of density and velocity by exploiting the wave structure.
\subsection{First-order derivative estimates}
We begin with the following estimate for the first-order derivatives.
\begin{lemma}\label{lem-energy1}
	Under the hypotheses of Proposition \ref{prop2}, there exists $C>0$ (independent of $\delta_1, \eps_1, T$) such that for all $ t\in (0,T]$,
	\begin{align*}
		\begin{aligned}
			&\|(\phi,\psi,\vartheta)_\x(t,\cdot)\|_{L^2(\bbr)}^2+\int_0^t   \|\vartheta_{\x\x}(\tau,\cdot)\|^2_{L^2(\bbr)} d\tau \\
			&\leq C\|(\phi,\psi,\vartheta)_\x(0,\cdot) \|_{H^1(\bbr)}^2+C(\eps_1+\delta_0) \int_0^t \left(  \|(\phi,\psi )_\x(\tau,\cdot)\|^2_{L^2(\bbr)}\right) d\tau+ C\delta_0^{\frac{1}{2}}.
		\end{aligned}
	\end{align*}
\end{lemma}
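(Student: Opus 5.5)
We will prove the first-order estimate by a standard energy method on the $\xi$-differentiated perturbation system \eqref{per-system}. The key point is that, at this level, we only need the temperature dissipation $\|\vartheta_{\xi\xi}\|^2$. The derivatives $(\phi,\psi)_\xi$ are allowed to appear on the right-hand side with a small coefficient $\eps_1+\delta_0$; their time integral will be recovered later from the wave-structure estimates.

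\textbf{Derivative system and multipliers.} We differentiate $\eqref{per-system}_1$–$\eqref{per-system}_3$ in $\xi$. We multiply the $\phi_\xi$-equation by $-p_v(v,\theta)\phi_\xi=\frac{p}{v}\phi_\xi$, the $\psi_\xi$-equation by $\psi_\xi$, and the $\vartheta_\xi$-equation by $\frac{\vartheta_\xi}{\theta}$. The multipliers are chosen so that the hyperbolic cross terms cancel up to a total derivative.

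In the momentum equation we write $(p-\bar p)_{\xi\xi}=\big(p_v\phi_\xi+p_\theta\vartheta_\xi+(p_v-\bar p_v)\bar v_\xi+(p_\theta-\bar p_\theta)\bar\theta_\xi\big)_\xi$. The $p_v\phi_{\xi\xi}\psi_\xi$ term then combines with $\frac{p}{v}\phi_\xi\psi_{\xi\xi}$ into $-(p_v\phi_\xi\psi_\xi)_\xi$ plus lower-order terms carrying $(p_v)_\xi$. The term $p_\theta\vartheta_{\xi\xi}\psi_\xi$ is paired with the term $p\,\psi_{\xi\xi}\frac{\vartheta_\xi}{\theta}$ coming from $(pu_\xi-\bar p\bar u_\xi)_\xi$ in the energy equation. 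Since $p_\theta=R/v=p/\theta$, these two combine into an exact derivative, up to commutators.

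\textbf{Heat-conduction term.} Integrating by parts, the term $\kappa\big(\frac{\theta_\xi}{v}-\frac{\bar\theta_\xi}{\bar v}\big)_{\xi\xi}\frac{\vartheta_\xi}{\theta}$ yields the good term $-\int\frac{\kappa}{v\theta}|\vartheta_{\xi\xi}|^2$. The commutators it produces, which involve $\phi_\xi\bar\theta_\xi$, $\vartheta_\xi\phi_\xi$ and similar products, are bounded by $\frac18\|\vartheta_{\xi\xi}\|^2$ plus $C(\eps_1+\delta_0)\|(\phi,\vartheta)_\xi\|^2$. For these bounds we use $\|(\phi,\psi,\vartheta)\|_{W^{1,\infty}}\le C\eps_1$, which follows from \eqref{apri-ass} and Sobolev's inequality.

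\textbf{Remaining error terms.} Integrating in time gives the left-hand side of the lemma, and the remaining terms are handled as follows.
\begin{itemize}
\item The terms $\dot{\mb X}(v^S,u^S,\theta^S)^{-\mb X}_{\xi\xi}\cdot(\phi,\psi,\vartheta)_\xi$ are bounded by $|\dot{\mb X}|\,\delta_S^{3/2}\|(\phi,\psi,\vartheta)_\xi\|$, which is at most $\delta_S|\dot{\mb X}|^2+\delta_S^2\|\cdot\|^2$. The first piece is controlled by Lemma \ref{lem-zvh}.
\item Terms where derivatives of the background multiply perturbations, for example $\bar v_{\xi\xi}\phi\,\psi_\xi$ or $\bar u_\xi\phi_\xi^2$, are bounded by $C\delta_0\|(\phi,\psi,\vartheta)_\xi\|^2$. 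Terms of the form $|\bar v_{\xi\xi}|^2|(\phi,\vartheta)|^2$ are controlled by $\mathcal G^R+\mathcal G^S+{}$ the contact-wave weighted integral of Lemma \ref{cw-lemma}, all of which are already bounded through Lemma \ref{lem-zvh} by $C\delta_0^{1/2}$ plus small multiples of $\int\|(\phi,\psi)_\xi\|^2$.
\item The terms $\int\|\vartheta_\xi\|^2$ and $\int\mathbf D$ also come from Lemma \ref{lem-zvh}.
\item The source terms $Q_{1,\xi}$ and $Q_{2,\xi}$ are handled with Lemma \ref{lemma2.2}, \eqref{vc-21} and Lemma \ref{lemma1.2}. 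They integrate in time to $C\delta_0^{1/2}$; for $Q_2^R$ we use the $L^2$ decay $\|\theta^R_{\xi\xi\xi}\|\le C\min\{\delta_R,(1+t)^{-1}\}$, pairing with $\vartheta_{\xi\xi}$ after integrating by parts if needed.
\end{itemize}
Finally, the initial $L^2$ term from Lemma \ref{lem-zvh} is absorbed into $\|(\phi,\psi,\vartheta)_\xi(0)\|_{H^1}$ together with the initial $L^2$ norm; as stated, the bound is implicitly combined with Lemma \ref{lem-zvh}.

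\textbf{Main obstacle.} The hard step is the cross terms $p_\theta\vartheta_{\xi\xi}\psi_\xi$ and $p\,\psi_{\xi\xi}\vartheta_\xi/\theta$, together with cubic terms such as $\phi_{\xi\xi}\psi_\xi\phi_\xi$. These contain second derivatives of the non-dissipated variables $\phi$ and $\psi$, and there is no viscosity available to absorb them. We must check carefully that the chosen weights make every top-order $\phi_{\xi\xi}$ or $\psi_{\xi\xi}$ term either an exact derivative or multiplied by a coefficient derivative. In the latter case, integrating by parts moves the derivative off, or the term is bounded by $\eps_1(\|\phi_{\xi\xi}\|^2+\|(\phi,\psi)_\xi\|^2)$, using the $H^2$ a priori bound; if such a $\|\phi_{\xi\xi}\|$ term does appear, it is deferred to the second-order estimate.
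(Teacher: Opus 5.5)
Your proposal is correct and is essentially the paper's proof. Your multipliers $\frac{p}{v}\phi_\xi$, $\psi_\xi$, $\frac{\vartheta_\xi}{\theta}$ are the paper's $\phi_\xi$, $\frac{v}{p}\psi_\xi$, $\frac{R}{p^2}\vartheta_\xi$ multiplied by the common factor $p/v$. They therefore give the same exact-derivative cancellation of every top-order $\phi_{\xi\xi}$, $\psi_{\xi\xi}$ term (so nothing needs to be deferred to the second-order estimate), the same dissipation $\frac{\kappa}{v\theta}|\vartheta_{\xi\xi}|^2$, and the same error terms, which are closed via Lemmas \ref{lem-zvh} and \ref{cw-lemma}.

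Two small precisions. The smallness in front of the contact-wave weighted integral comes from its $\delta_C$ prefactor, since Lemma \ref{cw-lemma} itself carries an $O(1)$ multiple of $\int_0^t\|(\phi,\psi)_\xi\|^2$. You are also right that invoking Lemma \ref{lem-zvh} brings in $\|(\phi,\psi,\vartheta)(0,\cdot)\|_{L^2}^2$; the displayed statement omits this term, but the paper's own proof needs it just as yours does.
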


\begin{proof}
	Multiplying $\eqref{per-system}_{1,\x}$ by $\phi_\x$, $\eqref{per-system}_{2,\x}$ by $\frac{v}{p}\psi_\x$ and $\eqref{per-system}_{3,\x}$ by $\frac{R}{p^2}\vartheta_\x$ respectively and then adding all the resultant equations, we obtain
	\begin{align*}
			&\left(\frac{1}{2}\phi_\x^2+\frac{v}{2p}\psi_{\x}^2+\frac{R}{\gamma-1}\frac{R}{2p^2}\vartheta_\x^2\right)_t+\left(\frac{R}{p}\psi_\x\vartheta_\x-\phi_\x\psi_{\x}-\frac{1}{2}\s\phi_\x^2-\frac{v}{2p}\s\psi_\x^2-\frac{R}{\gamma-1}\frac{R}{2p^2}\s\vartheta_\x^2\right)_\x\\
			&-\left(\frac{v}{2p}\right)_t\psi_\x^2-\frac{R}{\gamma-1}\left(\frac{R}{2p^2}\right)_t\vartheta_\x^2-\left(\frac{R}{p}\right)_\x\psi_\x\vartheta_\x+\s\left(\frac{v}{2p}\right)_\x\psi_\x^2+\s\frac{R}{\gamma-1}\left(\frac{R}{2p^2}\right)_\x\vartheta_\x^2\\
			&-\dot{\mb X}(t)\left[(v^S)^{-\mb X}_{\x\x}\phi_\x+\frac{v}{p}(u^S)^{-\mb X}_{\x\x}\psi_{\x}+\frac{R}{\gamma-1}\frac{R}{p^2}(\theta^S)^{-\mb X}_{\x\x}\vartheta_\x\right]-\frac{R}{pv}v_\x\psi_\x\vartheta_\x-\frac{v}{p}\left(\frac{p}{v}\right)_\x\phi_\x\psi_\x\\
			&+\frac{v}{p}\psi_\x\left[R\bar\theta_\x \left(\frac{1}{v}-\frac{1}{\bar v}\right)+R\bar v_\x\left(\frac{\bar\theta}{\bar v^2}-\frac{\theta}{v^2}\right)\right]_\x + \frac{R}{p^2}\vartheta_\x\left((p-\bar p)\bar u_\x\right)_\x + \kappa \left(\frac{\theta_\x}{v}-\frac{\bar\theta_\x}{\bar v}\right)_\x \left(\frac{R}{p^2}\vartheta_\x\right)_\x\\
			&+\frac{R}{p^2}p_\x\psi_\x\vartheta_\x+\frac{v}{p}\psi_\x Q_{1,\x}+\frac{R}{p^2}\vartheta_\x Q_{2,\x}=0.
	\end{align*}
	Equivalently, we may write
	\begin{align*}
			&\left(\frac{1}{2}\phi_\x^2+\frac{v}{2p}\psi_{\x}^2+\frac{R}{\gamma-1}\frac{R}{2p^2}\vartheta_\x^2\right)_t+\left(\frac{R}{p}\psi_\x\vartheta_\x-\phi_\x\psi_{\x}-\frac{1}{2}\s\phi_\x^2-\frac{v}{2p}\s\psi_\x^2\right.\\
            &\left.-\frac{R}{\gamma-1}\frac{R}{2p^2}\s\vartheta_\x^2\right)_\x
			+\sum_{i=1}^7 L_i=0,
	\end{align*}
	where 
	\begin{align*}
			L_1 :=& -\left(\frac{v}{2p}\right)_{\tau}\psi_\x^2-\frac{R}{\gamma-1}\left(\frac{R}{2p^2}\right)_{\tau}\vartheta_\x^2,\\
			L_2 :=& -\left(\frac{R}{p}\right)_\x\psi_\x\vartheta_\x+\s\left(\frac{v}{2p}\right)_\x\psi_\x^2+\s\frac{R}{\gamma-1}\left(\frac{R}{2p^2}\right)_\x\vartheta_\x^2     -\frac{R}{pv}v_\x\psi_\x\vartheta_\x\\
            &-\frac{v}{p}\left(\frac{p}{v}\right)_\x\phi_\x\psi_\x+\frac{R}{p^2}p_\x\psi_\x\vartheta_\x, \\
			L_3 :=& -\dot{\mb X}(t)\left[(v^S)^{-\mb X}_{\x\x}\phi_\x+\frac{v}{p}(u^S)^{-\mb X}_{\x\x}\psi_{\x}+\frac{R}{\gamma-1}\frac{R}{p^2}(\theta^S)^{-\mb X}_{\x\x}\vartheta_\x\right], \\
			L_4 :=& \frac{v}{p}\psi_\x\left[R\bar\theta_\x \left(\frac{1}{v}-\frac{1}{\bar v}\right)+R\bar v_\x\left(\frac{\bar\theta}{\bar v^2}-\frac{\theta}{v^2}\right)\right]_\x , \\
			L_5 :=&\frac{R}{p^2}\vartheta_\x\left((p-\bar p)\bar u_\x\right)_\x,  \\
			L_6 := &\kappa \left(\frac{\theta_\x}{v}-\frac{\bar\theta_\x}{\bar v}\right)_\x \left(\frac{R}{p^2}\vartheta_\x\right)_\x , \\
			L_7 :=  &\frac{v}{p}\psi_\x Q_{1,\x}+\frac{R}{p^2}\vartheta_\x Q_{2,\x} .
	\end{align*}
	
	By the Sobolev's inequality and {\it a priori} assumption‌ \eqref{apri-ass}, we have
	$$
	|v_\x| \leq |(v-\bar v)_\x| + |\bar v_\x| \leq C(\eps_1+\delta_0),
	$$
	and similarly we have $|p_\x|\leq C(\eps_1+\delta_0)$. Therefore,
	\beq\label{L2}
	 |L_2|\leq C(\eps_1+\delta_0)|(\phi_\x,\psi_\x,\vartheta_\x)|^2.
	\eeq
	For $L_1$, since $p_t=\frac{R}{v}\theta_t-\frac{R\theta}{v^2}v_t$, using $\eqref{NS-1}_1$, $\eqref{NS-1}_3$ and Sobolev's inequality, together with {\it a priori} assumption \eqref{apri-ass}, it remains only to estimate the term $|\theta_t|\psi_\x^2$ as follows,
	\begin{align*}
			|\theta_t|\psi_\x^2
			&\leq C (|\theta_\x|+|u_\x|+|v_\x|+|\theta_{\x\x}|)\psi_\x^2\leq C(\eps_1+\delta_0)\psi_\x^2+C|\theta_{\x\x}|\psi_\x^2\\
			&\leq C(\eps_1+\delta_0)\psi_\x^2+C|\theta_{\x\x}-\bar\theta_{\x\x}|\psi_\x^2+C|\bar\theta_{\x\x}|\psi^2_\x \\
			&\leq C(\eps_1+\delta_0)(\psi^2_\x+\vartheta_{\x\x}^2),
	\end{align*}
	so we have 
	\beq\label{L1}
	|L_1|\leq C(\eps_1+\delta_0)(\psi_{\x}^2+\vartheta_\x^2+\vartheta_{\x\x}^2).
	\eeq
	For $L_3$, by Lemma \ref{lemma1.3}, we have
	\beq\label{L3}
	|L_3|\leq C\deltas|\dot{\mb X}(t)| |(v^S)^{-\mb X}_{\x}| |(\phi_\x,\psi_\x,\vartheta_\x)|.
	\eeq
	For $L_4$, using Young's inequality and the properties of waves, we obtain
	\begin{equation}\label{L4}
	\begin{aligned}
		|L_4| &=\left| \frac{v}{p}\psi_\x\left[R\bar\theta_\x \left(\frac{1}{v}-\frac{1}{\bar v}\right)+R\bar v_\x\left(\frac{\bar\theta}{\bar v^2}-\frac{\theta}{v^2}\right)\right]_\x \right| \\
		& =\left| \frac{v}{p}\psi_\x\left[R\bar\theta_{\x\x} \left(\frac{1}{v}-\frac{1}{\bar v}\right)+R\bar\theta_{\x} \left(\frac{1}{v}-\frac{1}{\bar v}\right)_\x+R\bar v_{\x\x}\left(\frac{\bar\theta}{\bar v^2}-\frac{\theta}{v^2}\right)+R\bar v_{\x}\left(\frac{\bar\theta}{\bar v^2}-\frac{\theta}{v^2}\right)_\x\right] \right|\\
		&\leq C|\psi_\x| \left(|\bar\theta_{\x\x}||\phi| + |\bar\theta_\x|(|\phi|+|\phi_\x|) + |\bar v_{\x\x}||(\phi,\vartheta)|+|\bar v_\x| |(\phi,\vartheta,\phi_\x,\vartheta_\x)|\right)\\
		&\leq C |\psi_\x||\bar v_\x| |(\phi,\vartheta,\phi_\x,\vartheta_\x)|\leq C |\psi_\x||\bar v_\x| |(\phi,\vartheta)|+C \delta_0 |(\phi_\x,\psi_\x,\vartheta_\x)|^2.
	\end{aligned} 
    \end{equation}	
    Similarly, we have
    \beq\label{L5}
    |L_5|\leq C |\vartheta_\x| |\bar v_\x||(\phi,\vartheta)|+C\delta_0 |(\phi_\x,\vartheta_\x)|^2.
    \eeq   
    For $L_6$, a direct calculation gives
	\begin{align*}
		L_6 &= \kappa \left(\frac{\theta_\x}{v}-\frac{\bar\theta_\x}{\bar v}\right)_\x \left(\frac{R}{p^2}\vartheta_\x\right)_\x = \kappa \left[\frac{1}{v}\vartheta_\x+\bar\theta_\x\left(\frac{1}{v}-\frac{1}{\bar v}\right)\right]_\x \left(\frac{R}{p^2}\vartheta_\x\right)_\x \\
		&= \frac{\kappa R}{vp^2}\vartheta_{\x\x}^2+\frac{\kappa}{v}\left(\frac{R}{p^2}\right)_\x\vartheta_\x\vartheta_{\x\x}+\kappa \left(\frac{1}{v}\right)_\x\vartheta_\x  \left(\frac{R}{p^2}\vartheta_\x\right)_\x+\kappa \left[\bar\theta_\x\left(\frac{1}{v}-\frac{1}{\bar v}\right)\right]_\x \left(\frac{R}{p^2}\vartheta_\x\right)_\x\\
		&:=\frac{\kappa R}{vp^2}\vartheta_{\x\x}^2+L_{62}.
    \end{align*}
    From Sobolev's inequality and {\it a priori} assumption \eqref{apri-ass}, together with Lemmas \ref{lemma1.2}-\ref{Lemma 2.2.}, we have
    \begin{equation}\label{L6}
    	\begin{aligned}
    		|L_{62}| &=\left |\frac{\kappa}{v}\left(\frac{R}{p^2}\right)_\x\vartheta_\x\vartheta_{\x\x}+\kappa \left(\frac{1}{v}\right)_\x\vartheta_\x  \left(\frac{R}{p^2}\vartheta_\x\right)_\x+\kappa \left[\bar\theta_\x\left(\frac{1}{v}-\frac{1}{\bar v}\right)\right]_\x \left(\frac{R}{p^2}\vartheta_\x\right)_\x \right|\\
    		&\leq C(\eps_1+\delta_0) |(\phi_\x,\vartheta_\x,\vartheta_{\x\x})|^2+C |(\vartheta_\x,\vartheta_{\x\x})| |\bar v_\x||(\phi,\vartheta)|.
    	\end{aligned} 
    \end{equation}
    Combining $\eqref{L1}$-$\eqref{L6}$ and applying Young's inequality, we have
     \begin{align*}
     		&\|(\phi,\psi,\vartheta)_\x(t,\cdot)\|_{L^2(\bbr)}^2+\int_0^t   \|\vartheta_{\x\x}(\tau,\cdot)\|^2_{L^2(\bbr)} d\tau \\
     		&\leq C\|(\phi,\psi,\vartheta)_\x(0,\cdot) \|_{L^2(\bbr)}^2+C(\eps_1+\delta_0)\int_0^t\int_{\bbr}|(\phi_\x,\psi_\x,\vartheta_\x,\vartheta_{\x\x})|^2 d\x d\tau\\
     		&\quad +C\deltas\int_0^t\int_{\bbr}|\dot{\mb X}(\tau)| |(v^S)^{-\mb X}_{\x}| |(\phi_\x,\psi_\x,\vartheta_\x)|d\x d\tau+C\int_0^t\int_{\bbr} |\psi_\x Q_{1,\x}+\vartheta_\x Q_{2,\x}|d\x d\tau\\
            &\quad+C\int_0^t\int_{\bbr}  |(\psi_\x,\vartheta_\x,\vartheta_{\x\x})| |\bar v_\x||(\phi,\vartheta)| d\x d\tau\\
     		&\leq C\|(\phi,\psi,\vartheta)_\x(0,\cdot) \|_{L^2(\bbr)}^2+C(\eps_1+\delta_0) \int_0^t \left( \mathcal{G}^S(U) + \mathcal{G}^R(U) + \|(\phi,\psi )_\x(\tau,\cdot)\|^2_{L^2(\bbr)}\right) d\tau\\
     		&\quad +C(\eps_1+\delta_0)\int_0^t  \int_{\bbr} |W(\phi,\vartheta)|^2d\x d\tau + C\delta_S \int_{0}^t |\dot{\mb X}(\tau) |^2 d\tau + C\int_0^t \| \vartheta_\x(\tau,\cdot)\|_{L^2(\bbr)}^2 d\tau+C\delta_0.
     \end{align*}
      Finally, applying Lemma $\ref{lem-zvh}$ and \ref{cw-lemma}, and using the smallness of $\eps_1$ and $\delta_1$, we obtain the desired estimate and thus complete the proof of Lemma $\ref{lem-energy1}$.

\end{proof}

\subsection{Second-order derivative estimates}
We next establish the estimates for the second-order derivatives.
\begin{lemma}\label{lem-energy2}
	Under the hypotheses of Proposition \ref{prop2}, there exists $C>0$ (independent of $\delta_1, \eps_1, T$) such that for all $ t\in (0,T]$,
	\begin{align*}
			&\|(\phi,\psi,\vartheta)_{\x\x}(t,\cdot)\|_{L^2(\bbr)}^2+\int_0^t   \|\vartheta_{\x\x\x}(\tau,\cdot)\|^2_{L^2(\bbr)} d\tau \\
			&\leq C\|(\phi,\psi,\vartheta)(0,\cdot) \|_{H^2(\bbr)}^2+C(\eps_1+\delta_0) \int_0^t \left( \|(\phi,\psi )_\x(\tau,\cdot)\|^2_{H^1(\bbr)}\right) d\tau+ C\delta_0^{\frac{1}{2}}.
	\end{align*}
\end{lemma}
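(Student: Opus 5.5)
We mimic the proof of Lemma \ref{lem-energy1} one derivative higher. We differentiate each equation of \eqref{per-system} twice in $\xi$. We then multiply $\eqref{per-system}_{1,\xi\xi}$ by $\phi_{\xi\xi}$, $\eqref{per-system}_{2,\xi\xi}$ by $\frac{v}{p}\psi_{\xi\xi}$ and $\eqref{per-system}_{3,\xi\xi}$ by $\frac{R}{p^2}\vartheta_{\xi\xi}$, and add. The weights are chosen as before so that the principal hyperbolic cross terms cancel into a total $\xi$-derivative. These cross terms are $-\psi_{\xi\xi\xi}\phi_{\xi\xi}$, $\frac{v}{p}(p_v\phi_{\xi\xi\xi}+p_\theta\vartheta_{\xi\xi\xi})\psi_{\xi\xi}$ and $\frac{R}{p^2}p\,\psi_{\xi\xi\xi}\vartheta_{\xi\xi}$. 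Using $\frac{v}{p}p_v=-1$ and $\frac{v}{p}p_\theta=\frac{R}{p}$, they reduce to
\[
\big(\tfrac{R}{p}\psi_{\xi\xi}\vartheta_{\xi\xi}-\phi_{\xi\xi}\psi_{\xi\xi}\big)_\xi
\]
plus commutators with one derivative falling on the coefficients. The diffusion term, after one integration by parts, produces the good term $\frac{\kappa R}{vp^2}\vartheta_{\xi\xi\xi}^2$. The remainder is of the form $\kappa\big(\tfrac1v\big)_{\xi\xi}\theta_\xi$ and lower-order terms, analogous to $L_{62}$.

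We then integrate over $[0,t]\times\mathbb R$ and estimate the remainders in the same families $L_1,\dots,L_7$.

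\textbf{(a) Coefficient terms.} Terms with a derivative of the coefficients, such as $(\frac{v}{p})_t\psi_{\xi\xi}^2$ and $(\frac{R}{p})_\xi\psi_{\xi\xi}\vartheta_{\xi\xi}$, are bounded by $C(\eps_1+\delta_0)|(\phi,\psi,\vartheta)_{\xi\xi}|^2$ plus $C(\eps_1+\delta_0)\vartheta_{\xi\xi\xi}^2$, the latter coming from $\theta_t$. This uses $\|(v_\xi,u_\xi,\theta_\xi,p_\xi)\|_{L^\infty}\le C(\eps_1+\delta_0)$, which follows from \eqref{apri-ass} and Sobolev.

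\textbf{(b) Shift term.} The $\dot{\mb X}$ term is bounded by $C\delta_S|\dot{\mb X}||(v^S)_\xi^{-\mb X}||(\phi,\psi,\vartheta)_{\xi\xi}|$ via Lemma \ref{lemma1.3}. This gives $C\delta_S|\dot{\mb X}|^2$ plus a small multiple of the second derivatives.

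\textbf{(c) Wave-background terms.} Terms where derivatives hit $\bar v,\bar\theta,\bar u$ are bounded by $|\bar v_\xi|\,|(\phi,\vartheta,\phi_\xi,\vartheta_\xi)|$ times second derivatives. They are handled by Young together with $\mathcal G^R$, $\mathcal G^S$, the weight $W$ bound from Lemma \ref{cw-lemma}, and $\delta_0\|(\phi,\psi,\vartheta)_\xi\|^2$.

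\textbf{(d) Source terms.} The $Q$ terms require $Q_{1,\xi\xi}$ and $Q_{2,\xi\xi}$. For $Q_1$, Lemma \ref{lemma2.2} and \eqref{QC1} suffice. For $Q_2$, we integrate by parts once to move a derivative onto $\vartheta_{\xi\xi}$, so that $\int \frac{R}{p^2}\vartheta_{\xi\xi}Q_{2,\xi\xi}$ becomes $-\int(\frac{R}{p^2}\vartheta_{\xi\xi})_\xi Q_{2,\xi}$. This is controlled by $\frac18\int\vartheta_{\xi\xi\xi}^2+C\|Q_{2,\xi}\|^2$. The quantity $\|Q^R_{2,\xi}\|_{L^2}^2\lesssim\|\theta^R_{\xi\xi\xi}\|^2+\cdots$ is integrable in time with bound $C\delta_R^{1/2}$-type by Lemma \ref{lemma1.2}; the time integration here is done as in \eqref{v21}.

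After these bounds, what remains on the right is small multiples of $\int_0^t\|(\phi,\psi)_\xi\|_{H^1}^2$ and of $\int_0^t\|\vartheta_{\xi\xi\xi}\|^2$, plus quantities already controlled by Lemmas \ref{lem-zvh}, \ref{lem-energy1} and \ref{cw-lemma}. The $\int\vartheta_{\xi\xi\xi}^2$ term is absorbed into the left-hand side. Lemma \ref{lem-zvh} and Lemma \ref{lem-energy1} bound $\int_0^t(\mathcal G^R+\mathcal G^S+\|\vartheta_\xi\|^2+\|\vartheta_{\xi\xi}\|^2)$ and $\delta_S\int|\dot{\mb X}|^2$ by the initial data, $C\delta_0^{1/2}$ and $C(\eps_1+\delta_0)\int\|(\phi,\psi)_\xi\|^2$. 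The initial data contribute $C\|(\phi,\psi,\vartheta)(0)\|_{H^2}^2$. This yields the stated inequality.

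\textbf{Main obstacle.} The hardest point is that there is no $\psi_{\xi\xi}^2$ or $\phi_{\xi\xi}^2$ dissipation, so every remainder must be paired either with a small factor or with $\vartheta_{\xi\xi\xi}$. The most dangerous terms are those containing $\phi_{\xi\xi}$ or $\psi_{\xi\xi}$ multiplied by non-small wave derivatives, and those containing third derivatives of $\phi$ and $\psi$. The latter arise from differentiating $(p-\bar p)_\xi$ and $pu_\xi$ twice; they must cancel exactly through the symmetrizer weights, and we will check this carefully, including the $pu_{\xi\xi\xi}$ versus $p\,\psi_{\xi\xi\xi}$ split. The leftover $p_\theta\vartheta_{\xi\xi\xi}\frac vp\psi_{\xi\xi}$ is cancelled by design. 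Every other term carries a factor $\eps_1+\delta_0$ in front of $\|(\phi,\psi)_{\xi\xi}\|^2$, and this is acceptable since that quantity appears on the right of the lemma with small coefficient.
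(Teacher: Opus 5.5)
Your proposal is correct and follows essentially the same route as the paper. It uses the same symmetrizing multipliers $\phi_{\xi\xi}$, $\frac{v}{p}\psi_{\xi\xi}$, $\frac{R}{p^2}\vartheta_{\xi\xi}$ and the same grouping of remainders: coefficient terms, shift terms, wave-background terms, and source terms with one integration by parts moving $Q_{2,\xi\xi}$ onto $\vartheta_{\xi\xi\xi}$. It also closes the same way, via Lemmas \ref{lem-zvh}, \ref{lem-energy1} and \ref{cw-lemma}. One step to state precisely: the $\theta_t$ contribution contains $\vartheta_{\xi\xi}$ in $L^\infty$, which is not pointwise small. As in the paper's bound for $I_1$, it is handled only after integration, using $\|\vartheta_{\xi\xi}\|_{L^\infty}\le C\|\vartheta_{\xi\xi}\|_{L^2}^{1/2}\|\vartheta_{\xi\xi\xi}\|_{L^2}^{1/2}$ together with $\|\vartheta_{\xi\xi}\|_{L^2}\le\eps_1$ and Young's inequality.
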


\begin{proof}
	Multiplying $\eqref{per-system}_{1,\x\x}$ by $\phi_{\x\x}$, $\eqref{per-system}_{2,\x\x}$ by $\frac{v}{p}\psi_{\x\x}$ and $\eqref{per-system}_{3,\x\x}$ by $\frac{R}{p^2}\vartheta_{\x\x}$ respectively and adding all the resultant equations, we have
	\begin{align*}
			&\left(\frac{1}{2}\phi_{\x\x}^2+\frac{v}{2p}\psi_{\x\x}^2+\frac{R}{\gamma-1}\frac{R}{2p^2}\vartheta_{\x\x}^2\right)_t+\left(\frac{R}{p}\psi_{\x\x}\vartheta_{\x\x}-\phi_{\x\x}\psi_{\x\x}-\frac{1}{2}\s\phi_{\x\x}^2-\frac{v}{2p}\s\psi_{\x\x}^2\right.\\
            &\left.-\frac{R}{\gamma-1}\frac{R}{2p^2}\s\vartheta_{\x\x}^2\right)_\x-\left(\frac{v}{2p}\right)_t\psi_{\x\x}^2-\frac{R}{\gamma-1}\left(\frac{R}{2p^2}\right)_t\vartheta_{\x\x}^2-\left(\frac{R}{p}\right)_\x\psi_{\x\x}\vartheta_{\x\x}+\s\left(\frac{v}{2p}\right)_\x\psi_{\x\x}^2\\
            &+\s\frac{R}{\gamma-1}\left(\frac{R}{2p^2}\right)_\x\vartheta_{\x\x}^2-\dot{\mb X}(t)\left[(v^S)^{-\mb X}_{\x\x\x}\phi_{\x\x}+\frac{v}{p}(u^S)^{-\mb X}_{\x\x\x}\psi_{\x\x}+\frac{R}{\gamma-1}\frac{R}{p^2}(\theta^S)^{-\mb X}_{\x\x\x}\vartheta_{\x\x}\right]\\
            &+\frac{
			\kappa R}{vp^2}\vartheta_{\x\x\x}^2+\frac{2v}{p}\psi_{\x\x}\left[ \left(\frac{R}{v}\right)_\x\vartheta_{\x\x}-\left(\frac{p}{v}\right)_\x\phi_{\x\x}\right]+\frac{2\kappa R}{p^2}\left(\frac{1}{v}\right)_\x\vartheta_{\x\x}\vartheta_{\x\x\x}
            +\frac{2R}{p^2}p_\x\psi_{\x\x}\vartheta_{\x\x} \\
            &+\frac{v}{p}\left(\frac{R}{v}\right)_{\x\x}\psi_{\x\x}\vartheta_\x-\frac{v}{p}\left(\frac{p}{v}\right)_{\x\x}\phi_\x\psi_{\x\x}+\frac{R}{p^2}p_{\x\x}\psi_\x\vartheta_{\x\x} +\frac{\kappa R}{p^2}\left(\frac{1}{v}\right)_{\x\x}\vartheta_\x\vartheta_{\x\x}\\
            &+\frac{v}{p}\psi_{\x\x}\left[R\bar\theta_\x \left(\frac{1}{v}-\frac{1}{\bar v}\right)+R\bar v_\x\left(\frac{\bar\theta}{\bar v^2}-\frac{\theta}{v^2}\right)\right]_{\x\x} 
            - \frac{\kappa R}{p^2} \left[\bar\theta_\x\left(\frac{1}{v}-\frac{1}{\bar v}\right)\right]_{\x\x} \vartheta_{\x\x\x}\\
			&+ \frac{R}{p^2}\vartheta_{\x\x}\left((p-\bar p)\bar u_\x\right)_{\x\x} +\kappa \left(\frac{R}{p^2}\right)_\x \left(\frac{\bar\theta}{\bar v^2}-\frac{\theta}{v^2}\right)_{\x\x}\vartheta_{\x\x}+\frac{v}{p}\psi_{\x\x} Q_{1,\x\x}\\
            &+\left[\frac{R}{p^2}\vartheta_{\x\x} Q_{2,\x}\right]_\x-\frac{R}{p^2}\vartheta_{\x\x\x} Q_{2,\x}
            -\left(\frac{R}{p^2}\right)_\x\vartheta_{\x\x} Q_{2,\x}=0.
	\end{align*}
     Integrating the above equation, we can get
 	 \begin{align*}
 		    \|(\phi,\psi,\vartheta)_{\x\x}(t,\cdot)\|_{L^2(\bbr)}^2+\int_0^t   \|\vartheta_{\x\x\x}\|^2_{L^2(\bbr)} d\tau\leq C\left( \|(\phi,\psi,\vartheta)_{\x\x}(0,\cdot) \|_{L^2(\bbr)}^2+\sum_{i=1}^{6} I_i\right),
     \end{align*}
	   where 
	   \begin{align*}
	   		I_1 :=& \int_0^t\int_{\bbr} \left|\left(\frac{v}{2p}\right)_{\tau}\psi_{\x\x}^2+\frac{R}{\gamma-1}\left(\frac{R}{2p^2}\right)_{\tau}\vartheta_{\x\x}^2\right| d\x d\tau,\\
	   		I_2 :=& \int_0^t\int_{\bbr} \left|-\left(\frac{R}{p}\right)_\x\psi_{\x\x}\vartheta_{\x\x}+\s\left(\frac{v}{2p}\right)_\x\psi_{\x\x}^2+\s\frac{R}{\gamma-1}\left(\frac{R}{2p^2}\right)_\x\vartheta_{\x\x}^2+\frac{2v}{p}\psi_{\x\x}\left[ \left(\frac{R}{v}\right)_\x\vartheta_{\x\x}\right.\right.\\
            &\left.\left.-\left(\frac{p}{v}\right)_\x\phi_{\x\x}\right] +\frac{2\kappa R}{p^2}\left(\frac{1}{v}\right)_\x\vartheta_{\x\x}\vartheta_{\x\x\x}+\frac{2R}{p^2}p_\x\psi_{\x\x}\vartheta_{\x\x}\right| d\x d\tau, \\
	   		I_3 :=& \int_0^t\int_{\bbr} \left|\dot{\mb X}(\tau)\left[(v^S)^{-\mb X}_{\x\x\x}\phi_{\x\x}+\frac{v}{p}(u^S)^{-\mb X}_{\x\x\x}\psi_{\x\x}+\frac{R}{\gamma-1}\frac{R}{p^2}(\theta^S)^{-\mb X}_{\x\x\x}\vartheta_{\x\x}\right]\right| d\x d\tau, \\
	   		I_4 :=& \int_0^t\int_{\bbr} \left|\frac{v}{p}\left(\frac{R}{v}\right)_{\x\x}\psi_{\x\x}\vartheta_\x-\frac{v}{p}\left(\frac{p}{v}\right)_{\x\x}\phi_\x\psi_{\x\x}+\frac{R}{p^2}p_{\x\x}\psi_\x\vartheta_{\x\x} +\frac{\kappa R}{p^2}\left(\frac{1}{v}\right)_{\x\x}\vartheta_\x\vartheta_{\x\x} \right| d\x d\tau, \\
	   		I_5 :=& \int_0^t\int_{\bbr} \left| \frac{v}{p}\psi_{\x\x}\left[R\bar\theta_\x \left(\frac{1}{v}-\frac{1}{\bar v}\right)+R\bar v_\x\left(\frac{\bar\theta}{\bar v^2}-\frac{\theta}{v^2}\right)\right]_{\x\x} + \frac{R}{p^2}\vartheta_{\x\x}\left((p-\bar p)\bar u_\x\right)_{\x\x} \right| \\
	   		&+\left| \kappa \left(\frac{R}{p^2}\right)_\x \left(\frac{\bar\theta}{\bar v^2}-\frac{\theta}{v^2}\right)_{\x\x}\vartheta_{\x\x}- \frac{\kappa R}{p^2} \left[\bar \theta_\x\left(\frac{1}{v}-\frac{1}{\bar v}\right)\right]_{\x\x} \vartheta_{\x\x\x} \right| d\x d\tau,\\
	   		I_6 :=& \int_0^t\int_{\bbr} \left| \frac{v}{p}\psi_{\x\x} Q_{1,\x\x}-\frac{R}{p^2}\vartheta_{\x\x\x} Q_{2,\x}-\left(\frac{R}{p^2}\right)_\x\vartheta_{\x\x} Q_{2,\x} \right| d\x d\tau.
	   \end{align*}
	   For $I_1$, since $p_t=-\frac{R\theta}{v^2}v_t+\frac{R}{v}\theta_t$, by $\eqref{NS-1}_1$ and $\eqref{NS-1}_3$, using Young's inequality and {\it a priori} assumption \eqref{apri-ass},  together with Lemmas \ref{lemma1.2}-\ref{Lemma 2.2.}, we obtain
	   \begin{equation}\label{I1}
	   	\begin{aligned}
	   		I_1 &\leq C \int_0^t\int_{\bbr} |(v_\x,u_\x)| (\psi_{\x\x}^2+\vartheta_{\x\x}^2)+ |(u_\x,\theta_\x,v_\x\theta_\x,\theta_{\x\x})| (\psi_{\x\x}^2+\vartheta_{\x\x}^2)d\x d\tau\\
	   		& \leq C \int_0^t\int_{\bbr} (\eps_1+\delta_0)(\psi_{\x\x}^2+\vartheta_{\x\x}^2) + |\vartheta_{\x\x}| (\psi_{\x\x}^2+\vartheta_{\x\x}^2) d\x d\tau\\
	   	    & \leq C (\eps_1+\delta_0) \int_0^t(\|\psi_{\x\x}\|_{L^2(\bbr)}^2+\|\vartheta_{\x\x}\|_{L^2(\bbr)}^2) d\tau \\
            &\quad+C\int_0^t \|\vartheta_{\x\x}\|_{L^{\infty}(\bbr)} (\|\psi_{\x\x}\|_{L^2(\bbr)}^2+\|\vartheta_{\x\x}\|_{L^2(\bbr)}^2) d\tau\\
	   	    & \leq C (\eps_1+\delta_0) \int_0^t(\|\psi_{\x\x}\|_{L^2(\bbr)}^2+\|\vartheta_{\x\x}\|_{L^2(\bbr)}^2) d\tau \\
            &\quad +C\int_0^t \|\vartheta_{\x\x}\|_{L^{2}(\bbr)}^{\frac{1}{2}}\|\vartheta_{\x\x\x}\|_{L^2(\bbr)}^{\frac{1}{2}} (\|\psi_{\x\x}\|_{L^2(\bbr)}^2+\|\vartheta_{\x\x}\|_{L^2(\bbr)}^2) d\tau\\
	   		& \leq C (\eps_1+\delta_0) \int_0^t(\|\psi_{\x\x}\|_{L^2(\bbr)}^2+\|\vartheta_{\x\x}\|_{L^2(\bbr)}^2) d\tau \\
            &\quad+C \eps_1 \int_0^t \|\vartheta_{\x\x\x}\|_{L^2(\bbr)}^{\frac{1}{2}} (\|\psi_{\x\x}\|_{L^2(\bbr)}^{\frac{3}{2}}+\|\vartheta_{\x\x}\|_{L^2(\bbr)}^{\frac{3}{2}}) d\tau\\
	   	    & \leq C (\eps_1+\delta_0) \int_0^t(\|\psi_{\x\x}\|_{L^2(\bbr)}^2+\|\vartheta_{\x\x}\|_{L^2(\bbr)}^2+ \|\vartheta_{\x\x\x}\|_{L^2(\bbr)}^2) d\tau.
	   	\end{aligned}
	   \end{equation}     
	    For $I_2$, $I_3$ and $I_4$, similar to $I_1$, we obtain
	    \begin{equation}\label{I2}
	   	\begin{aligned}
	   		&I_2+I_3+I_4 \\
            &\leq C (\eps_1+\delta_0) \int_0^t(\|(\phi,\psi,\vartheta)_\x\|_{H^1}^2+ \|\vartheta_{\x\x\x}\|_{L^2(\bbr)}^2) d\tau\\
            &\quad
             +C\delta_S\int_0^t|\dot{\mb X}|^2\,d\tau
             +C\delta_0.
	   	\end{aligned}
	   \end{equation}
	   For $I_5$, using Young's inequality and Sobolev's inequality, together with {\it a priori} assumption \eqref{apri-ass}, we have
	   	\begin{equation}\label{I5}
	   	\begin{aligned}
	   		I_5&\leq C(\eps_1+\delta_0) \int_0^t \left( \mathcal{G}^S(U) + \mathcal{G}^R(U) + \|(\phi,\psi ,\vartheta)_\x\|^2_{H^1(\bbr)}+\|\vartheta_{\x\x\x}\|_{L^2(\bbr)}^2\right) d\tau\\
	   		&\quad +C(\eps_1+\delta_0)\int_0^t  \int_{\bbr} |W(\phi,\vartheta)|^2d\x d\tau.
	   	\end{aligned}
	   	\end{equation}
	   		     For $I_6$, from Lemma \ref{lemma2.2}, using Young's inequality and {\it a priori }assumption \eqref{apri-ass}, we get
	   	\begin{equation}\label{I6}
	   	\begin{aligned}
	   		I_6&\leq \frac{1}{4} \int_0^t \|\vartheta_{\x\x\x}\|_{L^2(\bbr)}^2 d\tau +C\int_0^t \left(\| Q_{1,\x\x}\|_{L^2(\bbr)}\|\psi_{\x\x}\|_{L^2(\bbr)}\right) d\tau\\
            &\quad +C \int_0^t \left(\|Q_{2,\x}\|_{L^2(\bbr)}^2+\|\vartheta_{\x\x}\|_{L^2(\bbr)}^2\right) d\tau\\
	   		&\leq \frac{1}{4} \int_0^t \|\vartheta_{\x\x\x}\|_{L^2(\bbr)}^2 d\tau+C \int_0^t \|\vartheta_{\x\x}\|_{L^2(\bbr)}^2 d\tau+C\delta_0.\\
	   	\end{aligned}
	   	\end{equation}
        Combining \eqref{I1}-\eqref{I6}, applying Lemmas
    \ref{lem-energy1} and \ref{lem-zvh}, and using the smallness of $\eps_1$ and $\delta_1$, we complete the proof.
\end{proof}

\subsection{Space-time estimates for density and velocity}
Since the system \eqref{NS} contains no viscous dissipation in the momentum equation, we must establish separate space-time estimates for density and velocity to control the nonlinear terms involving velocity and density in the energy estimate. To obtain these estimates, we choose the multipliers used in the proofs of Lemmas \ref{lem-energy3} and \ref{lem-energy4} according to the wave structure \eqref{eq:rough_wave_intro} associated with the perturbation system \eqref{per-system}. For the first-order space-time estimates in Lemma \ref{lem-energy3}, we multiply $\eqref{per-system}_2$ and $\eqref{per-system}_3$ by $-\frac{p}{2}\phi_\x$ and $\psi_\x$, respectively. For the second-order space-time estimates in Lemma \ref{lem-energy4}, we first differentiate $\eqref{per-system}_2$ and $\eqref{per-system}_3$ with respect to $\xi$, and then multiply the resulting equations by $-\frac{p}{2}\phi_{\xi\xi}$ and $\psi_{\xi\xi}$, respectively, to obtain the required control of $\phi_{\xi\xi}$ and $\psi_{\xi\xi}$.

\begin{lemma}\label{lem-energy3}
	Under the hypotheses of Proposition \ref{prop2}, there exists $C>0$ (independent of $\delta_1, \eps_1, T$) such that for all $ t\in (0,T]$,
	\begin{align*}
			&\int_0^t \|(\phi,\psi)_{\x}(\tau,\cdot)\|_{L^2(\bbr)}^2 d\tau\\
			&\leq C\left(\|(\psi,\vartheta)(t,\cdot) \|_{L^2(\bbr)}^2+\|(\phi,\psi)_{\x}(t,\cdot) \|_{L^2(\bbr)}^2+\|(\psi,\vartheta)(0,\cdot) \|_{L^2(\bbr)}^2+\|(\phi,\psi)_{\x}(0,\cdot) \|_{L^2(\bbr)}^2\right)\\
			&\quad +C\int_0^t   \|\vartheta_\x(\tau,\cdot)\|_{H^1(\bbr)}^2d\tau + C\delta_0+ C\delta_S \int_{0}^t |\dot{\mb X}(\tau) |^2 d\tau\\
			&\quad +C(\eps_1+\delta_0) \int_0^t \left( \mathcal{G}^S(U) + \mathcal{G}^R(U) +\int_{\bbr} |W(\phi,\psi,\vartheta)|^2d\x \right) d\tau.
	\end{align*}
\end{lemma}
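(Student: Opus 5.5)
We prove Lemma \ref{lem-energy3} with the two multipliers announced before the statement, exploiting the Kawashima--Shizuta coupling. The momentum equation gives dissipation of $\phi_\xi$ through $p_v$, and the energy equation gives dissipation of $\psi_\xi$ through the term $pu_\xi$.

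\emph{Density step.} Write $(p-\bar p)_\xi = -\frac{p}{v}\phi_\xi + \frac{R}{v}\vartheta_\xi + r_1$, where $r_1$ collects terms of the form $|\bar v_\xi|\,|(\phi,\vartheta)|$ plus quadratic terms. Multiply $\eqref{per-system}_2$ by $-\phi_\xi$ (the announced weight $-\frac p2\phi_\xi$ differs only by a positive factor; I normalize so that the constants combine cleanly). The principal contribution is $\frac{p}{v}\phi_\xi^2$. To handle the time derivative, use
\[
-\psi_t\phi_\xi = -(\psi\phi_\xi)_t + \psi\phi_{\xi t},
\]
and substitute $\phi_{\xi t}=\sigma\phi_{\xi\xi}+\psi_{\xi\xi}+\dot{\mathbf X}(v^S)^{-\mathbf X}_{\xi\xi}$ from $\eqref{per-system}_{1,\xi}$. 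After integration by parts this yields $-\psi_\xi^2$ up to exact derivatives, plus terms $-\psi\,\dot{\mathbf X}(v^S)^{-\mathbf X}_{\xi\xi}$ bounded by $C\delta_S|\dot{\mathbf X}|^2+C\delta_S\mathcal G^S$. The $\sigma$-terms are exact derivatives: $\sigma\psi_\xi\phi_\xi-\sigma\psi\phi_{\xi\xi}$ integrates to $2\sigma\int\psi_\xi\phi_\xi$, which needs care. Rather than absorbing it, I would write the time-boundary term using $\phi_\xi$ and $\psi$, which are controlled by $\|\psi\|_{L^2}$ and $\|\phi_\xi\|_{L^2}$ at times $0$ and $t$; these are exactly the boundary norms in the statement. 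The coupling $\frac Rv\vartheta_\xi\phi_\xi$ is absorbed by Young's inequality into $\frac{p}{4v}\phi_\xi^2+C\vartheta_\xi^2$. The result is
\[
\int_0^t\|\phi_\xi\|^2 \le C\int_0^t\|\psi_\xi\|^2 + C\int_0^t\|\vartheta_\xi\|^2 + C(\text{boundary}) + \text{errors}.
\]

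\emph{Velocity step.} Multiply $\eqref{per-system}_3$ by $\psi_\xi$. Expand $pu_\xi-\bar p\bar u_\xi = p\psi_\xi+(p-\bar p)\bar u_\xi$; the first piece gives the good term $p\psi_\xi^2$. The term $\frac{R}{\gamma-1}\vartheta_t\psi_\xi$ is rewritten as $\frac{R}{\gamma-1}(\vartheta\psi_\xi)_t$ minus $\vartheta\psi_{\xi t}$. Integrating by parts in $\xi$ turns the latter into $\vartheta_\xi\psi_t$, and substituting $\eqref{per-system}_2$ gives $\vartheta_\xi\bigl(\sigma\psi_\xi-(p-\bar p)_\xi+\dots\bigr)$. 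This produces $\vartheta_\xi\phi_\xi$ and $\vartheta_\xi^2$ terms, bounded by $\eta\|\phi_\xi\|^2+C_\eta\|\vartheta_\xi\|^2$ with $\eta$ small. The diffusion term $\kappa(\frac{\theta_\xi}{v}-\frac{\bar\theta_\xi}{\bar v})_\xi\psi_\xi$ is bounded by $\frac p4\psi_\xi^2+C\|\vartheta_{\xi\xi}\|^2+C(\eps_1+\delta_0)\|\phi_\xi\|^2+C|\bar v_\xi|^2|(\phi,\vartheta)|^2$; this is why $\|\vartheta_\xi\|_{H^1}$ appears on the right-hand side. The time-boundary term is controlled by $\|\vartheta\|_{L^2}\|\psi_\xi\|_{L^2}$ at times $0$ and $t$.

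\emph{Combination.} Add the density estimate to a large multiple $K$ of the velocity estimate, choosing $K$ so that the $\psi_\xi$ dissipation from the velocity step absorbs the $\psi_\xi^2$ cost from the density step, and then take $\eta \ll 1/K$. The remaining error terms are handled as in Lemmas \ref{lem-energy1}--\ref{lem-energy2}:
\begin{itemize}
\item terms with $|\bar v_\xi|\,|(\phi,\psi,\vartheta)|\,|(\phi_\xi,\psi_\xi)|$ are bounded, using Lemmas \ref{lemma1.2}, \ref{lemma1.3} and \ref{Lemma 2.2.}, by $C(\eps_1+\delta_0)\bigl(\mathcal G^S+\mathcal G^R+\int|W(\phi,\psi,\vartheta)|^2\bigr)$ plus a small multiple of the left-hand side;
\item the $\dot{\mathbf X}$ terms are bounded by $C\delta_S|\dot{\mathbf X}|^2$ plus small terms;
\item the $Q_1,Q_2$ terms are bounded by $C\delta_0$ via Lemma \ref{lemma2.2} and \eqref{v21}--\eqref{vc-21};
\item cubic terms are bounded by $\eps_1$ times the dissipation.
\end{itemize}

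The main obstacle is the density step. The time-derivative manipulation produces a cross term $\sigma\psi_\xi\phi_\xi$ whose coefficient is $O(1)$, since $\sigma$ is not small, and it cannot simply be absorbed. I expect to resolve this by choosing the boundary representation carefully. One option is to use $\eqref{per-system}_1$ to write $\psi_\xi=\phi_t-\sigma\phi_\xi-\dot{\mathbf X}(v^S)^{-\mathbf X}_\xi$, so that $\sigma\phi_\xi\psi_\xi$ becomes $\frac\sigma2(\phi_\xi^2)_t$-type terms. If that fails, I would use the wave operator $(\partial_t-\sigma\partial_\xi)$ as a unit, so that all $\sigma$-terms are exact derivatives along the shock frame and the bad coefficient cancels.
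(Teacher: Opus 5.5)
Your proposal is the paper's argument. The paper uses the same multipliers, $-\frac{p}{2}\phi_\xi$ on $\eqref{per-system}_2$ and $\psi_\xi$ on $\eqref{per-system}_3$, but adds them in a single identity so that the $-\frac{p}{2}\psi_\xi^2$ produced by the density part is absorbed directly by $p\psi_\xi^2$ from the energy part, with no large $K$; it has the same boundary terms $\frac{R}{\gamma-1}\vartheta\psi_\xi-\frac{p}{2}\psi\phi_\xi$ and the same error bookkeeping. Your ``main obstacle'' comes from a sign slip: the two $\sigma$-contributions are $+\sigma\psi_\xi\phi_\xi$ and $+\sigma\psi\phi_{\xi\xi}$, which sum to the exact derivative $\sigma(\psi\phi_\xi)_\xi$ (your transport-operator fallback), and the analogous $\sigma\vartheta_\xi\psi_\xi$ terms cancel in the energy part; even a surviving $O(1)$ term $\sigma\psi_\xi\phi_\xi$ would be harmless, since Young's inequality puts it into $\frac{p}{4v}\phi_\xi^2+C\psi_\xi^2$, which your density bound already allows.
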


\begin{proof}
	Multiplying $\eqref{per-system}_2$ by $-\frac{p}{2}\phi_\x$ and $\eqref{per-system}_3$ by $\psi_\x$ and adding the equations, we get
		\begin{align*}
			&\frac{p}{2}\psi_\x^2+\frac{p^2}{2v}\phi_\x^2+\left(\frac{R}{\gamma-1}\vartheta\psi_\x-\frac{p}{2}\psi\phi_\x\right)_t+\left(\frac{p}{2}\psi\phi_t-\frac{R}{\gamma-1}\vartheta\psi_t\right)_\x\\
			&+\left(\frac{p}{2}\right)_t\psi \phi_\x-\left(\frac{p}{2}\right)_\x\psi\phi_t - \frac{R}{\gamma-1}(p-\bar p)_\x\vartheta_\x - \frac{Rp}{2v}\phi_\x\vartheta_\x-\kappa \left(\frac{\theta_\x}{v}-\frac{\bar\theta_\x}{\bar v}\right)_\x \psi_\x\\
			&+\dot{\mb X}(t)\left[\frac{p}{2}(u^S)^{-\mb X}_{\x}\phi_\x+\frac{R}{\gamma-1}(u^S)^{-\mb X}_{\x}\vartheta_\x-\frac{R}{\gamma-1}(\theta^S)^{-\mb X}_{\x}\psi_{\x}-\frac{p}{2}(v^S)^{-\mb X}_{\x}\psi_\x\right]\\
			&+\bar u_\x (p-\bar p) \psi_\x -\frac{p}{2}R\bar\theta_\x\left(\frac{1}{v}-\frac{1}{\bar v}\right)\phi_\x-\frac{p}{2}R\bar v_\x\left(\frac{\bar \theta}{\bar v^2}-\frac{\theta}{v^2}\right)\phi_\x\\
			&-\frac{R}{\gamma-1}\vartheta_\x Q_1 +\psi_\x Q_2 -\frac{p}{2}\phi_\x Q_1=0.
	\end{align*}
	To get the result, we integrate the equation
	\begin{align*}
			&\int_0^t\int_{\bbr}\left(\psi_\x^2+\phi_\x^2 \right)d\x d\tau\\
			&\leq C \left( \|(\psi, \vartheta, \psi_\x, \phi_\x)(t,\cdot)\|_{L^2(\bbr)} ^2+\|(\psi, \vartheta, \psi_\x,\phi_\x)(0,\cdot)\|_{L^2(\bbr)} ^2\right)+ \sum_{i=1}^8 K_i,
	\end{align*}
	where 
		\begin{align*}
			K_1 := &\int_0^t\int_{\bbr} \left|\left(\frac{p}{2}\right)_\tau\psi \phi_\x\right| d\x d\tau,\\
			K_2 := &\int_0^t\int_{\bbr} \left|\left(\frac{p}{2}\right)_\x\psi\phi_{\tau} \right| d\x d\tau, \\
			K_3 := &\int_0^t\int_{\bbr} \left|\bar u_\x (p-\bar p) \psi_\x -\frac{p}{2}R\bar\theta_\x\left(\frac{1}{v}-\frac{1}{\bar v}\right)\phi_\x-\frac{p}{2}R\bar v_\x\left(\frac{\bar \theta}{\bar v^2}-\frac{\theta}{v^2}\right)\phi_\x\right| d\x d\tau, \\
			K_4 := &\int_0^t\int_{\bbr} \left|\dot{\mb X}(\tau)\left[\frac{p}{2}(u^S)^{-\mb X}_{\x}\phi_\x+\frac{R}{\gamma-1}(u^S)^{-\mb X}_{\x}\vartheta_\x-\frac{R}{\gamma-1}(\theta^S)^{-\mb X}_{\x}\psi_{\x}\right.\right.\\
            &\left.\left.-\frac{p}{2}(v^S)^{-\mb X}_{\x}\psi_\x\right] \right| d\x d\tau, \\
			K_5 := &\int_0^t\int_{\bbr} \left| \frac{R}{\gamma-1}(p-\bar p)_\x\vartheta_\x \right| d\x d\tau,\\
			K_6 := &\int_0^t\int_{\bbr} \left| \kappa \left(\frac{\theta_\x}{v}-\frac{\bar\theta_\x}{\bar v}\right)_\x \psi_\x \right| d\x d\tau,\\
			K_7 :=  &\int_0^t\int_{\bbr} \left|  \frac{Rp}{2v}\phi_\x\vartheta_\x\right| d\x d\tau,\\
			K_8 := &\int_0^t\int_{\bbr} \left|  -\frac{R}{\gamma-1}\vartheta_\x Q_1 +\psi_\x Q_2 -\frac{p}{2}\phi_\x Q_1\right| d\x d\tau.
	\end{align*}
	For $K_1$, as $p_t=-\frac{R\theta}{v^2}v_t+\frac{R}{v}\theta_t$, from $\eqref{NS-1}_1$ and $\eqref{NS-1}_3$,  using Young's inequality and Sobolev's inequality, together with Lemmas \ref{lemma1.2}-\ref{Lemma 2.2.}, we have 
	\begin{align}\label{K1}
    \begin{aligned}
			K_1 &\leq C \int_0^t\int_{\bbr} |(v_\x,u_\x)||\psi \phi_\x| + |(u_\x,\theta_\x,v_\x\theta_\x,\theta_{\x\x})| |\psi \phi_\x| d\x d\tau\\
			&\leq C(\eps_1+\delta_0) \int_0^t \left(\mathcal{G}^S + \mathcal{G}^R+\|(\phi,\psi,\vartheta)_\x\|_{L^2(\bbr)}^2 +  \|\vartheta_{\x\x}\|_{L^2(\bbr)}^2\right)  d\tau\\
            &\quad+C(\eps_1+\delta_0) \int_0^t\int_{\bbr} |W\psi|^2d\x  d\tau,
        \end{aligned}
		\end{align}
    where we also use {\it a priori} assumption \eqref{apri-ass}.
    
	Likewise, for $K_2$, we have
	\begin{equation}\label{K2}
	\begin{aligned}
		K_2 &\leq C \int_0^t\int_{\bbr} |(v_\x,\theta_\x)||\psi \phi_{\tau}| d\x d\tau\\
		& \leq C \int_0^t\int_{\bbr} |(v_\x,\theta_\x)||\psi| (|\phi_\x|+|\psi_\x|+|\dot{\mb X}(\tau)(v^S)^{-\mb X}_{\x}|) d\x d\tau\\
		&\leq C(\eps_1+\delta_0) \int_0^t \left(\mathcal{G}^S(U) + \mathcal{G}^R(U)+\|(\phi,\psi,\vartheta)_\x\|_{L^2(\bbr)}^2 \right)  d\tau\\
		&\quad+ C(\eps_1+\delta_0) \int_0^t\int_{\bbr} |W\psi|^2d\x  d\tau+C\delta_S \int_{0}^t |\dot{\mb X}(\tau) |^2 d\tau.
	\end{aligned}
    \end{equation}
	For $K_3$, using Young's inequality and Sobolev's inequality together with Lemmas \ref{lemma1.2}-\ref{Lemma 2.2.}, we can get  
	\begin{equation}\label{K3}
		\begin{aligned}
			K_3 &\leq C \int_0^t\int_{\bbr} |\bar v_\x(\phi,\vartheta)| |\psi_\x| +|\bar v_\x\phi| |\phi_\x| +|\bar v_\x (\phi,\vartheta)||\phi_\x |d\x d\tau\\
			& \leq C\delta_0\int_0^t \left(\|(\phi,\vartheta)_\x\|_{L^2(\bbr)}^2+\mathcal{G}^S(U) + \mathcal{G}^R(U)\right) d\tau  +C\delta_0 \int_0^t\int_{\bbr} |W(\phi,\vartheta)|^2d\x  d\tau.
		\end{aligned}
	\end{equation}   
    Similarly, for $K_4$, we obtain
    \begin{equation}\label{K4}
    	\begin{aligned}
    		K_4 \leq C \deltas \int_0^t \|(\phi,\psi,\vartheta)_\x\|_{L^2(\bbr)}^2  d\tau+C\delta_S \int_{0}^t |\dot{\mb X}(\tau) |^2 d\tau.
    	\end{aligned}
    \end{equation}
     For $K_5$, from $(p-\bar p)_\xi=\frac{R\vartheta_\x}{v}-\frac{R\theta\phi_\x}{v^2}+R\bar\theta_\x\left(\frac 1v-\frac 1{\bar v}\right)+R\bar v_\x \left(\frac{\bar\theta}{\bar v^2}-\frac{\theta}{v^2}\right)$, together with Lemma \ref{lemma1.2}-\ref{Lemma 2.2.}, we have
     \begin{equation}\label{K5}
     	\begin{aligned}
     		K_5 &\leq C \int_0^t\int_{\bbr} |\vartheta_\x| (|\vartheta_\x|+|\phi_\x|+|\bar\theta_\x||\phi|+|\bar v_\x||(\phi,\vartheta)|) d\x d\tau\\
     		&\leq C\delta_0 \int_0^t \left(\mathcal{G}^S(U) + \mathcal{G}^R(U)\right)  d\tau+ C\delta_0 \int_0^t\int_{\bbr} |W(\phi,\vartheta)|^2d\x  d\tau\\
     		&\quad +\frac{1}{4} \int_0^t\| \phi_\x(\tau)\|_{L^2(\bbr)}^2 d\tau+C \int_0^t \| \vartheta_\x(\tau)\|_{L^2(\bbr)}^2 d\tau.
     	\end{aligned}
     \end{equation}
      For $K_6$, using Young's inequality together with Lemmas \ref{lemma1.2}-\ref{Lemma 2.2.}, we can get
      \begin{equation}\label{K6}
      	\begin{aligned}
      		K_6 &\leq C \int_0^t\int_{\bbr} |\psi_\x| (|\vartheta_\x|+|\vartheta_{\x\x}|+|\bar\theta_{\x\x}||\phi|+|\bar \theta_\x||\phi|) d\x d\tau\\
      		&\leq C\delta_0 \int_0^t \left(\mathcal{G}^S(U) + \mathcal{G}^R(U)\right)  d\tau+ C\delta_0 \int_0^t\int_{\bbr} |W\phi|^2d\x  d\tau\\
      		&\quad+\frac{1}{4} \int_0^t\| \psi_\x\|_{L^2(\bbr)}^2 d\tau+C \int_0^t \| \vartheta_\x\|_{H^1(\bbr)}^2 d\tau.
      	\end{aligned}
      \end{equation}
      For $K_7$ and $K_8$, using Young's inequality and Lemma \ref{lemma2.2}, we have
       \begin{equation}\label{K7}
      	\begin{aligned}
      		K_7+K_8 &\leq C \int_0^t\int_{\bbr} |\phi_\x\vartheta_\x|  +| \vartheta_\x Q_1| +|\psi_\x Q_2| +|\phi_\x Q_1| d\x d\tau\\
      		&\leq \frac{1}{4} \int_0^t\| (\phi_\x,\psi_\x)\|_{L^2(\bbr)}^2 d\tau+C \int_0^t \left(\| \vartheta_\x\|_{L^2(\bbr)}^2 +\| (Q_1,Q_2)\|_{L^2(\bbr)}^2 \right)d\tau\\
      		&\leq \frac{1}{4} \int_0^t\| (\phi_\x,\psi_\x)\|_{L^2(\bbr)}^2 d\tau+C \int_0^t \| \vartheta_\x\|_{L^2(\bbr)}^2 d\tau +C\delta_0.
      	\end{aligned}
      \end{equation}
       Combining \eqref{K1}-\eqref{K7}, and then using the smallness of $\eps_1$ and $\delta_1$, we can absorb all the terms involving $\|(\phi,\psi)_\x\|_{L^2(\bbr)}^2$ into the left-hand side.
\end{proof}

\begin{lemma}\label{lem-energy4}
	Under the hypotheses of Proposition \ref{prop2}, there exists $C>0$ (independent of $\delta_1, \eps_1, T$) such that for all $ t\in (0,T]$,
	\begin{align*}
			&\int_0^t \|(\phi,\psi)_{\x\x}(\tau,\cdot)\|_{L^2(\bbr)}^2 d\tau\\
			&\leq C\left(\|(\psi,\vartheta)_{\x}(t,\cdot) \|_{L^2(\bbr)}^2+\|(\phi,\psi)_{\x\x}(t,\cdot) \|_{L^2(\bbr)}^2+\|(\psi,\vartheta)_{\x}(0,\cdot) \|_{L^2(\bbr)}^2+\|(\phi,\psi)_{\x\x}(0,\cdot) \|_{L^2(\bbr)}^2\right)\\
			&\quad+C\int_0^t   \|\vartheta_\x(\tau,\cdot)\|_{H^2(\bbr)}^2d\tau + C\delta_0+ C\delta_S \int_{0}^t |\dot{\mb X}(\tau) |^2 d\tau\\
			&\quad+C(\eps_1+\delta_0) \int_0^t \left( \mathcal{G}^S(U) + \mathcal{G}^R(U) +\int_{\bbr} |W(\phi,\psi,\vartheta)|^2d\x + \|(\phi,\psi)_{\x}(\tau,\cdot)\|_{L^2(\bbr)}^2 \right) d\tau.
	\end{align*}
\end{lemma}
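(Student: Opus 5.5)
We follow the proof of Lemma \ref{lem-energy3}, one derivative higher, guided by the wave structure \eqref{eq:rough_wave_intro}. We differentiate $\eqref{per-system}_2$ and $\eqref{per-system}_3$ once in $\xi$, multiply the first by $-\frac{p}{2}\phi_{\xi\xi}$ and the second by $\psi_{\xi\xi}$, and add. The leading structure is the same as at first order. In the differentiated momentum equation, the term $(p-\bar p)_{\xi\xi}$ contains $-\frac{R\theta}{v^2}\phi_{\xi\xi}$, and multiplying by $-\frac p2\phi_{\xi\xi}$ produces the good term $\frac{p^2}{2v}\phi_{\xi\xi}^2$. In the differentiated energy equation, $pu_{\xi\xi}$ times $\psi_{\xi\xi}$ gives $p\psi_{\xi\xi}^2$. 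The time-derivative terms $\psi_{\xi t}\phi_{\xi\xi}$ and $\vartheta_{\xi t}\psi_{\xi\xi}$ are rewritten as
\[
\Big(\tfrac{R}{\gamma-1}\vartheta_\xi\psi_{\xi\xi}-\tfrac p2\psi_\xi\phi_{\xi\xi}\Big)_t
\]
plus flux terms, using $\phi_{\xi t}=\sigma\phi_{\xi\xi}+\psi_{\xi\xi}+\dot{\mb X}(v^S)^{-\mb X}_{\xi\xi}$ and the analogous formula for $\psi_{\xi t}$. The cross terms $\frac p2\psi_{\xi\xi}^2$ coming from $\phi_{\xi t}$ combine with $p\psi_{\xi\xi}^2$ to leave $\frac p2\psi_{\xi\xi}^2$, exactly as in Lemma \ref{lem-energy3}. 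Integrating over $[0,t]\times\bbr$, the boundary-in-time terms are bounded by $\|(\psi,\vartheta)_\xi\|^2+\|(\phi,\psi)_{\xi\xi}\|^2$ at times $0$ and $t$. This gives the first line of the right-hand side.

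Next we list the remainder terms $\tilde K_i$, mirroring $K_1$–$K_8$, and estimate them in this order.
\begin{itemize}
\item[(i)] Terms with $p_t$ or $p_\xi$ times $\psi_\xi\phi_{\xi\xi}$: we use $\eqref{NS-1}$, Sobolev's inequality and \eqref{apri-ass} to get $|p_t|+|p_\xi|\le C(\eps_1+\delta_0)+C|\vartheta_{\xi\xi}|$. This yields $C(\eps_1+\delta_0)\int_0^t\|(\phi,\psi,\vartheta)_\xi\|_{H^1}^2$, with the $\vartheta_{\xi\xi}$ factor handled through $\|\vartheta_{\xi\xi}\|_{L^\infty}\le\|\vartheta_{\xi\xi}\|^{1/2}\|\vartheta_{\xi\xi\xi}\|^{1/2}$ as in \eqref{I1}.
\item[(ii)] Wave-coefficient terms such as $\bar u_{\xi\xi}(p-\bar p)$, $\bar\theta_{\xi\xi}(\frac1v-\frac1{\bar v})$ and $\bar v_{\xi\xi}(\cdot)$, paired with $\phi_{\xi\xi}$ or $\psi_{\xi\xi}$: Lemma \ref{lemma1.2}(2), \eqref{shock-base} and \eqref{vc-pe} give $|\bar v_{\xi\xi}|\le C|\bar v_\xi|$ (up to contact-wave terms of size $W$). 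These terms are then bounded by $C\delta_0(\mathcal G^S+\mathcal G^R+\int|W(\phi,\vartheta)|^2+\|(\phi,\vartheta)_\xi\|^2)$ plus a small multiple of $\|(\phi,\psi)_{\xi\xi}\|^2$.
\item[(iii)] Shift terms: they carry $\dot{\mb X}$ times $(v^S,u^S,\theta^S)_{\xi\xi}$, and Lemma \ref{lemma1.3} bounds them by $C\delta_S\int_0^t|\dot{\mb X}|^2+C\delta_S\int_0^t\|(\phi,\psi,\vartheta)_{\xi\xi}\|^2$.
\item[(iv)] $\frac{R}{\gamma-1}(p-\bar p)_{\xi\xi}\vartheta_{\xi\xi}$ and $\frac{Rp}{2v}\phi_{\xi\xi}\vartheta_{\xi\xi}$: Young's inequality gives $\frac18\|\phi_{\xi\xi}\|^2+C\|\vartheta_{\xi\xi}\|^2$.
\item[(v)] The heat term $\kappa(\frac{\theta_\xi}{v}-\frac{\bar\theta_\xi}{\bar v})_{\xi\xi}\psi_{\xi\xi}$: we bound it by $\frac18\|\psi_{\xi\xi}\|^2+C\|\vartheta_\xi\|_{H^2}^2$ plus wave-coefficient terms as in (ii). This step is the reason $\int_0^t\|\vartheta_\xi\|_{H^2}^2$ appears in the statement.
\item[(vi)] Source terms $Q_{1,\xi},Q_{2,\xi}$: Lemma \ref{lemma2.2} together with \eqref{QC1} and \eqref{Q-est} shows their time-integrated squares are $O(\delta_0)$, so they contribute $C\delta_0$.
\end{itemize}

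The main obstacle is the set of quasilinear terms in which the extra derivative lands on coefficients, for example $(\frac{R\theta}{v^2})_\xi\phi_\xi\phi_{\xi\xi}$ and $(\frac R v)_\xi\vartheta_\xi\phi_{\xi\xi}$. No third derivatives of $\phi$ or $\psi$ may appear, since no dissipation controls them. We check that such terms either form exact derivatives, which are moved into fluxes or into the $t$-boundary terms, or involve at most $\phi_{\xi\xi},\psi_{\xi\xi},\vartheta_{\xi\xi\xi}$ multiplied by a factor of size $O(\eps_1+\delta_0)$. One example is $-\frac p2 p_{\xi}\phi_{\xi\xi}$-type commutators times $v_\xi$. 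The problematic product $v_{\xi}\phi_{\xi}\phi_{\xi\xi}$ is handled by writing $v_\xi=\phi_\xi+\bar v_\xi$ and using $\|\phi_\xi\|_{L^\infty}\le C\eps_1$. The result is $C(\eps_1+\delta_0)\int_0^t\|(\phi,\psi)_\xi\|_{H^1}^2$. Finally, all $\int_0^t\|(\phi,\psi)_{\xi\xi}\|^2$ contributions carry small coefficients or the $\frac18$ factors, so they are absorbed into the left-hand side. The remaining terms are exactly those listed in the statement of Lemma \ref{lem-energy4}.
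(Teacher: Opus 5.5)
Your proposal is correct and follows the paper's proof: the paper uses the same multipliers $-\frac p2\phi_{\xi\xi}$ and $\psi_{\xi\xi}$ on the once-differentiated momentum and energy equations, and obtains the same time-boundary term $\big(\frac{R}{\gamma-1}\vartheta_\xi\psi_{\xi\xi}-\frac p2\psi_\xi\phi_{\xi\xi}\big)_t$. Your items (i)--(vi) and your ``main obstacle'' paragraph match its estimates of $H_1$--$H_9$ ($H_8$ being the quasilinear commutator terms), except that you treat the $|\vartheta_{\xi\xi}|\,|\psi_\xi|\,|\phi_{\xi\xi}|$ part of $H_1$ by interpolating $\|\vartheta_{\xi\xi}\|_{L^\infty}$, whereas the paper implicitly uses $\|\psi_\xi\|_{L^\infty}\le C\varepsilon_1$; either works.
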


	\begin{proof}
	Multiplying $\eqref{per-system}_{2,\x\x}$ by $-\frac{p}{2}\phi_{\x\x}$ and $\eqref{per-system}_{3,\x\x}$ by $\psi_{\x\x}$ and adding the equations, we get
	\begin{align*}
			&\frac{p}{2}\psi_{\x\x}^2+\frac{p^2}{2v}\phi_{\x\x}^2+\left(\frac{R}{\gamma-1}\vartheta_\x\psi_{\x\x}-\frac{p}{2}\psi_\x\phi_{\x\x}\right)_t+\left(\frac{p}{2}\psi_\x\phi_{\x t}-\frac{R}{\gamma-1}\vartheta_\x\psi_{\x t}\right)_\x\\
			&+\left(\frac{p}{2}\right)_t\psi_\x \phi_{\x\x}-\left(\frac{p}{2}\right)_\x\psi_\x\phi_{\x t} - \frac{R}{\gamma-1}(p-\bar p)_{\x\x}\vartheta_{\x\x} - \frac{Rp}{2v}\phi_{\x\x}\vartheta_{\x\x}-\kappa \left(\frac{\theta_\x}{v}-\frac{\bar\theta_\x}{\bar v}\right)_{\x\x} \psi_{\x\x}\\
			&+\dot{\mb X}(t)\left[\frac{p}{2}(u^S)^{-\mb X}_{\x\x}\phi_{\x\x}+\frac{R}{\gamma-1}(u^S)^{-\mb X}_{\x\x}\vartheta_{\x\x}-\frac{R}{\gamma-1}(\theta^S)^{-\mb X}_{\x\x}\psi_{\x\x}-\frac{p}{2}(v^S)^{-\mb X}_{\x\x}\psi_{\x\x}\right]\\
			&+\left(\bar u_\x (p-\bar p)\right)_\x\psi_{\x\x} -\frac{p}{2}\left[R\bar\theta_\x\left(\frac{1}{v}-\frac{1}{\bar v}\right)+R\bar v_\x\left(\frac{\bar \theta}{\bar v^2}-\frac{\theta}{v^2}\right)\right]_\x\phi_{\x\x}+ p_\x \psi_\x\psi_{\x\x}\\
			&-\frac{p}{2}\phi_{\x\x}\left[\left(\frac{R}{v}\right)_\x\vartheta_\x-\left(\frac{p}{v}\right)_\x\phi_\x\right]-\frac{R}{\gamma-1}\vartheta_{\x\x} Q_{1,\x} +\psi_{\x\x} Q_{2,\x} -\frac{p}{2}\phi_{\x\x} Q_{1,\x}=0.
	\end{align*}
	So we have the following inequality
		\begin{align*}
			&\int_0^t\int_{\bbr}\left(\psi_{\x\x}^2+\phi_{\x\x}^2 \right)d\x d\tau\\
			&\leq C \left( \|(\psi_\x, \vartheta_\x, \psi_{\x\x}, \phi_{\x\x})(t,\cdot)\|_{L^2(\bbr)} ^2+\|(\psi_\x, \vartheta_\x, \psi_{\x\x},\phi_{\x\x})(0,\cdot)\|_{L^2(\bbr)} ^2\right)+ \sum_{i=1}^9 H_i,
	\end{align*}
	where 
	\begin{align*}
			H_1 := &\int_0^t\int_{\bbr} \left|\left(\frac{p}{2}\right)_\tau\psi_\x \phi_{\x\x}\right| d\x d\tau,\\
			H_2 := &\int_0^t\int_{\bbr} \left|\left(\frac{p}{2}\right)_\x\psi_\x\phi_{\x \tau} \right| d\x d\tau, \\
			H_3 := &\int_0^t\int_{\bbr} \left|    \frac{Rp}{2v}\phi_{\x\x}\vartheta_{\x\x} \right| d\x d\tau, \\
			H_4 := &\int_0^t\int_{\bbr} \left|\dot{\mb X}(\tau)\left[\frac{p}{2}(u^S)^{-\mb X}_{\x\x}\phi_{\x\x}+\frac{R}{\gamma-1}(u^S)^{-\mb X}_{\x\x}\vartheta_{\x\x}-\frac{R}{\gamma-1}(\theta^S)^{-\mb X}_{\x\x}\psi_{\x\x}\right.\right.\\
            &\left.\left.-\frac{p}{2}(v^S)^{-\mb X}_{\x\x}\psi_{\x\x}\right] \right| d\x d\tau, \\
			H_5 := &\int_0^t\int_{\bbr} \left| \frac{R}{\gamma-1}(p-\bar p)_{\x\x}\vartheta_{\x\x} \right| d\x d\tau,\\
			H_6 := &\int_0^t\int_{\bbr} \left| \kappa \left(\frac{\theta_\x}{v}-\frac{\bar\theta_\x}{\bar v}\right)_{\x\x} \psi_{\x\x} \right| d\x d\tau,\\
			H_7 := &\int_0^t\int_{\bbr} \left|   \left(\bar u_\x (p-\bar p)\right)_\x\psi_{\x\x} -\frac{p}{2}\left[R\bar\theta_\x\left(\frac{1}{v}-\frac{1}{\bar v}\right)+R\bar v_\x\left(\frac{\bar \theta}{\bar v^2}-\frac{\theta}{v^2}\right)\right]_\x\phi_{\x\x}  \right| d\x d\tau,\\
			H_8 := &\int_0^t\int_{\bbr} \left|   p_\x \psi_\x\psi_{\x\x} -\frac{p}{2}\phi_{\x\x}\left[\left(\frac{R}{v}\right)_\x\vartheta_\x-\left(\frac{p}{v}\right)_\x\phi_\x\right]  \right| d\x d\tau,\\
			H_9 := &\int_0^t\int_{\bbr} \left|  -\frac{R}{\gamma-1}\vartheta_{\x\x} Q_{1,\x} +\psi_{\x\x} Q_{2,\x} -\frac{p}{2}\phi_{\x\x} Q_{1,\x}\right| d\x d\tau.
	\end{align*}
	For $H_1$, as $p_t=-\frac{R\theta}{v^2}v_t+\frac{R}{v}\theta_t$, from $\eqref{NS-1}_1$ and $\eqref{NS-1}_3$,  using Young's inequality and Sobolev's inequality, together with Lemmas \ref{lemma1.2}-\ref{Lemma 2.2.} and {\it a priori} assumption \eqref{apri-ass}, we have
	\begin{equation}\label{H1}
		\begin{aligned}
			H_1 &\leq C \int_0^t\int_{\bbr} |(v_\x,u_\x)||\psi_\x \phi_{\x\x}| + |(u_\x,\theta_\x,v_\x\theta_\x,\theta_{\x\x})| |\psi_\x \phi_{\x\x}| d\x d\tau\\
			& \leq C \int_0^t\int_{\bbr} |(\phi_\x,\psi_\x,\vartheta_\x, \vartheta_{\x\x})| |\psi_\x \phi_{\x\x}| + |(\bar v_\x,\bar u_\x, \bar \theta_\x,\bar\theta_{\x\x})| |\psi_\x \phi_{\x\x}| d\x d\tau\\
			&\leq C(\eps_1+\delta_0) \int_0^t \left(\|\psi_\x\|_{L^2(\bbr)}^2 +  \|(\phi_{\x\x},\vartheta_{\x\x})\|_{L^2(\bbr)}^2\right)  d\tau.
		\end{aligned}
	\end{equation}
	Likewise, for $H_2$, we have
	\begin{equation}\label{H2}
		\begin{aligned}
			H_2 &\leq C \int_0^t\int_{\bbr} |(v_\x,\theta_\x)||\psi_\x \phi_{\x \tau}| d\x d\tau\\
			& \leq C \int_0^t\int_{\bbr} |(v_\x,\theta_\x)| |\psi_\x| (|\phi_{\x\x}|+|\psi_{\x\x}|+|\dot{\mb X}(\tau)(v^S)^{-\mb X}_{\x\x}|) d\x d\tau\\
			&\leq C(\eps_1+\delta_0) \int_0^t \left(\|\psi_{\x}\|_{L^2(\bbr)}^2 +\|(\phi,\psi)_{\x\x}\|_{L^2(\bbr)}^2 \right)  d\tau +C\delta_S \int_{0}^t |\dot{\mb X}(\tau) |^2 d\tau.
		\end{aligned}
	\end{equation}
	For  $H_3$, using Young's inequality, we obtain
	 \begin{equation}\label{H3}
		\begin{aligned}
			H_3 
			&\leq \frac{1}{8} \int_0^t\| \phi_{\x\x}\|_{L^2(\bbr)}^2 d\tau+C \int_0^t \| \vartheta_{\x\x} \|_{L^2(\bbr)}^2 d\tau.
		\end{aligned}
	\end{equation}	
	For $H_4$, using Young's inequality together with Lemmas \ref{lemma1.2}-\ref{Lemma 2.2.}, we have
	\begin{equation}\label{H4}
		\begin{aligned}
			H_4 \leq C \deltas \int_0^t \|(\phi,\psi,\vartheta)_{\x\x}\|_{L^2(\bbr)}^2  d\tau+C\delta_S \int_{0}^t |\dot{\mb X}(\tau) |^2 d\tau.
		\end{aligned}
	\end{equation}
	     For $H_5$, since 
	     \begin{align*}
	     	(p-\bar p)_{\x\xi}
	     	&=\left[\frac{R\vartheta_\x}{v}-\frac{R\theta\phi_\x}{v^2}+R\bar\theta_\x\left(\frac 1v-\frac 1{\bar v}\right)+R\bar v_\x \left(\frac{\bar\theta}{\bar v^2}-\frac{\theta}{v^2}\right)\right]_\x\\
	     	&=\frac{R\vartheta_{\x\x}}{v}+\left(\frac{R}{v}\right)_\x\vartheta_\x-\frac{R\theta\phi_{\x\x}}{v^2}-\left(\frac{R\theta}{v^2}\right)_\x \phi_\x+R\bar\theta_{\x\x}\left(\frac 1v-\frac 1{\bar v}\right)\\
	     	&\quad +R\bar\theta_\x\left(\frac 1v-\frac 1{\bar v}\right)_\x+R\bar v_{\x\x} \left(\frac{\bar\theta}{\bar v^2}-\frac{\theta}{v^2}\right)+R\bar v_\x \left(\frac{\bar\theta}{\bar v^2}-\frac{\theta}{v^2}\right)_\x,
	     \end{align*}
	     using Sobolev's inequality and Young's inequality, together with {\it a priori} assumption \eqref{apri-ass}, we have
	\begin{equation}\label{H5}
		\begin{aligned}
			H_5 &\leq C(\eps_1+\delta_0) \int_0^t\int_{\bbr} |\vartheta_{\x\x}| (|\vartheta_\x|+|\phi_\x|+|\bar\theta_\x||\phi|+|\bar v_\x||(\phi,\vartheta)|) d\x d\tau\\
			&\quad + C \int_0^t\int_{\bbr} |\vartheta_{\x\x}| (|\vartheta_{\x\x}|+|\phi_{\x\x}|) d\x d\tau\\
			&\leq C(\eps_1+\delta_0) \int_0^t \left(\mathcal{G}^S(U) + \mathcal{G}^R(U)+\|(\phi,\vartheta)_{\x}\|_{L^2(\bbr)}^2 \right)  d\tau\\
            &\quad + C(\eps_1+\delta_0) \int_0^t\int_{\bbr} |W(\phi,\vartheta)|^2d\x  d\tau +\frac{1}{8} \int_0^t\| \phi_{\x\x}\|_{L^2(\bbr)}^2 d\tau +C \int_0^t \| \vartheta_{\x\x}\|_{L^2(\bbr)}^2 d\tau.
		\end{aligned}
	\end{equation}
	Likewise, for $H_7$, we have
	\begin{equation}\label{H7}
		\begin{aligned}
			H_7 &\leq C \int_0^t\int_{\bbr} (|\phi_{\x\x}|,|\vartheta_{\x\x}|) (|\vartheta_\x|+|\phi_\x|+|\bar\theta_\x||\phi|+|\bar v_\x||(\phi,\vartheta)|) d\x d\tau\\
			&\leq C \delta_0 \int_0^t \left(\mathcal{G}^S(U) + \mathcal{G}^R(U)+\|(\phi,\vartheta)_{\x}\|_{H^1(\bbr)}^2 \right)  d\tau + C\delta_0 \int_0^t\int_{\bbr} |W(\phi,\vartheta)|^2d\x  d\tau.
		\end{aligned}
	\end{equation}	
	For $H_6$, using Young's inequality and Sobolev's inequality, together with Lemmas \ref{lemma1.2}-\ref{Lemma 2.2.} and {\it a priori} assumption \eqref{apri-ass}, we get 
	 	\begin{equation}\label{H6}
	 	\begin{aligned}
	 		H_6 &\leq C \int_0^t\int_{\bbr} |\psi_{\x\x}| \left(|\vartheta_{\x\x\x}|+|v_\x| |\vartheta_{\x\x}|+(|v_\x|^2 +|v_{\x\x}|) |\vartheta_{\x}|\right) d\x d\tau\\
	 		&\quad +C \int_0^t\int_{\bbr} |\psi_{\x\x}| \left| \left[\bar\theta_\x \left(\frac{1}{v}-\frac{1}{\bar v}\right)\right]_{\x\x}\right|d\x d\tau\\
	 		&\leq C \delta_0 \int_0^t \left(\mathcal{G}^S(U) + \mathcal{G}^R(U) \right)  d\tau+ C\delta_0 \int_0^t\int_{\bbr} |W(\phi,\vartheta)|^2d\x  d\tau\\
	 		&\quad + C (\eps_1+\delta_0) \int_0^t \left(\|(\phi,\vartheta)_{\x}\|_{H^1(\bbr)}^2 \right)  d\tau
            +\frac{1}{4} \int_0^t\| \psi_{\x\x}\|_{L^2(\bbr)}^2 d\tau
	 		+C \int_0^t \| \vartheta_{\x\x\x}\|_{L^2(\bbr)}^2 d\tau.
	 	\end{aligned}
	 \end{equation}
	Similar to $H_6$, for $H_8$, we have
	\begin{equation}\label{H8}
		\begin{aligned}
			H_8 \leq C (\eps_1+\delta_0) \int_0^t \|(\phi,\psi,\vartheta)_\x\|_{L^2(\bbr)}^2  +\|(\phi,\psi)_{\x\x}\|_{L^2(\bbr)}^2 d\tau.
		\end{aligned}
	\end{equation}
	 For  $H_9$, using Young's inequality and Lemma \ref{lemma2.2}, we get
	\begin{equation}\label{H9}
		\begin{aligned}
			H_9 &\leq C \int_0^t\int_{\bbr} \left(| \vartheta_{\x\x} Q_{1,\x}| +|\psi_{\x\x} Q_{2,\x}| +|\phi_{\x\x} Q_{1,\x}|\right) d\x d\tau\\
			&\leq \frac{1}{4} \int_0^t\| (\phi_{\x\x},\psi_{\x\x})\|_{L^2(\bbr)}^2 d\tau+C \int_0^t \left(\| \vartheta_{\x\x}\|_{L^2(\bbr)}^2 +\| (Q_{1,\x},Q_{2,\x})\|_{L^2(\bbr)}^2 \right)d\tau\\
			&\leq \frac{1}{4} \int_0^t\| (\phi_{\x\x},\psi_{\x\x})\|_{L^2(\bbr)}^2 d\tau+C \int_0^t \| \vartheta_{\x\x}\|_{L^2(\bbr)}^2 d\tau +C\delta_0.
		\end{aligned}
	\end{equation}
	
	Using the smallness of $\eps_1$ and $\delta_1$, together with \eqref{H1}-\eqref{H9}, we complete the proof.
\end{proof}

\begin{remark}
In Lemma \ref{lem-energy3} and Lemma \ref{lem-energy4}, by coupling the momentum equation $\eqref{per-system}_{2}$ and energy equation $\eqref{per-system}_{3}$, the dissipation generated by the heat-conductive term can be transferred, through the hyperbolic structure, to the density and velocity components, thereby enabling us to close the full energy estimates.
\end{remark}
\noindent{\bf Conclusion:}
From Lemmas \ref{lem-zvh}, \ref{lem-energy1}-\ref{lem-energy4} and \ref{cw-lemma}, together with the smallness of $\eps_1$ and $\delta_1$, we have 
\begin{align*}
		\begin{aligned}
			&\|(v-\bar v, u-\bar u, \theta-\bar \theta)(t,\cdot)\|^2_{H^2(\bbr)}+\deltas\int_0^t|\dot{\mb{X}}(\tau)|^2 d\tau+\int_0^t ( \mathcal{G}^R(U) + \mathcal{G}^S(U))d\tau \\
			&\quad+\int_0^t\|(v-\bar v, u-\bar u)_\x(\tau,\cdot)\|_{H^1(\bbr)}^2 d\tau+\int_0^t\|(\theta-\bar \theta)_\x(\tau,\cdot)\|^2_{H^2(\bbr)} d\tau\\
			&\le C_0^2 \|\big(v_0(\cdot)-\bar v(0,\cdot),u_0(\cdot) -\bar u(0,\cdot),\theta_0(\cdot)-\bar\theta(0,\cdot)\big)\|^2_{H^2(\bbr)}  + C_0^2 \delta_0^{1/2} ,\qquad 0\leq t\leq T,
		\end{aligned}
	\end{align*}
	where $ \mathcal{G}^R(U), \mathcal{G}^S(U)$ are as defined in \eqref{maingood}. Then, by the coordinate transformation \(\xi=x-\sigma t\), the proof of Proposition \(\ref{prop2}\) is completed.

\subsection{Bootstrap argument}
We now complete the proof by combining the local existence result in Proposition \ref{prop:soln} with the {\it a priori} estimates established in Proposition \ref{prop2}. We first obtain a local solution and then specify the smallness conditions on $\varepsilon_0$ and $\delta_0$, which allow us to close the bootstrap argument. Let $\varepsilon_1, \delta_1$ and $C_0$ be the constants given in Proposition \ref{prop2}, and $C,C_i(i=1,2,3)$ are constants independent of $\varepsilon_i$, $\delta_i$ and $T_i(i=0,1)$.

\noindent$\bullet$ {\bf The existence of local solution.}\\
We choose the smooth functions
$(\underline v,\underline u,\underline\theta)$ in Proposition \ref{prop:soln}
such that
\begin{equation}\label{underline}
    \sum_{\pm}
    \|(\underline v-v_\pm,\underline u-u_\pm,
    \underline\theta-\theta_\pm)\|_{L^2(\mathbb R_\pm)}
    +
    \|(\underline v,\underline u,\underline\theta)_x\|_{H^1(\mathbb R)}
    \leq  C_1\delta_0.
\end{equation}
Then, by the initial perturbation condition \eqref{i-p}, and \eqref{underline}, we obtain
\begin{equation}\label{ini-*}
    \|(v_0-\underline v,
       u_0-\underline u,
       \theta_0-\underline\theta)\|_{H^2(\mathbb R)}
    \leq \varepsilon_0+C_1\delta_0.
\end{equation}
Hence, by Sobolev's inequality,
\[
    \|(v_0-\underline v,
       \theta_0-\underline\theta)\|_{L^\infty(\mathbb R)}
    \leq C(\varepsilon_0+C_1\delta_0).
\]
Since $\underline v$ and $\underline\theta$ are uniformly bounded away from zero, taking $\varepsilon_0$ and $\delta_0$ sufficiently small yields
\[
    0<C^{-1}\leq v_0,\,\theta_0\leq C.
\]
Therefore, Proposition \ref{prop:soln} gives a $T_0>0$ such that
\eqref{NS} admits a unique local solution $(v,u,\theta)$ on $[0,T_0]$,
with
\[
    (v-\underline v,u-\underline u,\theta-\underline\theta)
    \in C([0,T_0];H^2(\mathbb R)).
\]
\noindent$\bullet$ {\bf Choice of the small parameters $\varepsilon_0$ and $\delta_0$.}\\
We now specify the choice of the small parameters $\eps_0$ and $\delta_0$ more precisely. Let $0<\delta_0<\delta_1<1$. Then, by Lemmas \ref{lemma1.2}-\ref{Lemma 2.2.}, we have
\begin{equation*}
    \sum_{\pm}
    \|(\bar v(0,\cdot)-v_\pm,\bar u(0,\cdot)-u_\pm,
    \bar\theta(0,\cdot)-\theta_\pm)\|_{L^2(\mathbb R_\pm)}
    +
    \|(\bar v,\bar u,\bar\theta)_x(0,\cdot)\|_{H^1(\mathbb R)}
    \leq C_2\sqrt{\delta_0}.
\end{equation*}
Combining the above estimates and \eqref{underline}, we obtain
\begin{equation}\label{similar}
    \|(\underline v(\cdot)-\bar v(0,\cdot),
       \underline u(\cdot)-\bar u(0,\cdot),
       \underline\theta(\cdot)-\bar\theta(0,\cdot))\|_{H^2(\mathbb R)}
    \leq C_3\sqrt{\delta_0}.
\end{equation}

Set
\[
    \varepsilon_*
    :=
    \frac{\frac{\varepsilon_1}{2}-C_0\delta_0^{1/4}}
    {C_0+1}.
\]
We choose $\delta_0>0$ sufficiently small such that
\begin{equation}\label{delta-0}
    \frac{\frac{\varepsilon_1}{4}-C_0\delta_0^{1/4}}
    {C_0+1}
    -C_1\delta_0-C_3\sqrt{\delta_0}\geq 0,
\end{equation}
and choose $\varepsilon_0>0$ such that
\begin{equation}\label{eps-0}
    \varepsilon_0\leq
    \frac{\varepsilon_1}{4(C_0+1)}.
\end{equation}
It then follows from \eqref{delta-0} and \eqref{eps-0} that
\[
    \varepsilon_0+C_1\delta_0+C_3\sqrt{\delta_0}
    \leq \varepsilon_*.
\]
Thus, by \eqref{ini-*}, \eqref{similar}, and the triangle inequality,
\begin{equation}\label{initial-small}
    \|(v_0(\cdot)-\bar v(0,\cdot),u_0(\cdot)-\bar u(0,\cdot),\theta_0(\cdot)-\bar\theta(0,\cdot))
    \|_{H^2(\mathbb R)}
    \leq \varepsilon_*,
\end{equation}
where
\[
    0<\varepsilon_*<\frac{\varepsilon_1}{2}.
\]

Together with \eqref{ini-*}, from Proposition \ref{prop:soln}, it holds that there exists $T_0>0$ such that \eqref{NS} has a unique solution $(v,u,\theta)$ on $[0,T_0]$ satisfying 
\begin{equation*}
    \|(v(t,\cdot)-\underline v(\cdot), u(t,\cdot)-\underline u(\cdot), \theta(t,\cdot)-\underline\theta(\cdot))\|_{H^2(\mathbb{R})} \leq \frac{\eps_1}{2}, \qquad 0\leq t\leq T_0.
\end{equation*}
By an argument similar to that used in deriving \eqref{similar}, we obtain
\begin{equation*}
    \|(\underline v(\cdot)-\bar v(t,\cdot), \underline u(\cdot)-\bar u(t,\cdot), \underline\theta(\cdot)-\bar\theta(t,\cdot))\|_{H^2(\mathbb{R})} \leq C \sqrt{\delta_0}(1+\sqrt{t}),\qquad 0\leq t\leq T_0.
\end{equation*}
Furthermore, using the smallness of $\delta_0$, and choosing $T_1<T_0$ small enough such that $C \sqrt{\delta_0}(1+\sqrt{T_1})<\frac{\eps_1}{2}$, we have
\begin{equation*}
    \|(\underline v(\cdot)-\bar v(t,\cdot), \underline u(\cdot)-\bar u(t,\cdot), \underline\theta(\cdot)-\bar\theta(t,\cdot))\|_{H^2(\mathbb{R})} \leq \frac{\eps_1}{2},\qquad 0\leq t\leq T_1.
\end{equation*}
Therefore, by the above estimates, we obtain
\begin{equation*}
    \|(v(t,\cdot)-\bar v(t,\cdot),\,u(t,\cdot)-\bar u(t,\cdot),\,
    \theta(t,\cdot)-\bar\theta(t,\cdot))\|_{H^2(\mathbb{R})}
    \leq \eps_1,
    \qquad 0\leq t\leq T_1.
\end{equation*}
Moreover, since $\mb X(t)$ is absolutely continuous, it follows from
Proposition \ref{prop:soln} that
\begin{equation*}
    (v-\bar v, u-\bar u, \theta-\bar\theta)
    \in C([0,T_1];H^2(\bbr)).
\end{equation*}
Hence, the {\it a priori} assumption \eqref{apri-ass} in Proposition \ref{prop2} is satisfied on a nonempty time interval.

\noindent$\bullet$ {\bf The bootstrap argument.}\\
We now define
\[
    T^*
    :=
    \sup\Big\{
    T>0:
    \|(v-\bar v,u-\bar u,\theta-\bar\theta)
    (t,\cdot)\|_{H^2(\mathbb R)}
    \leq\varepsilon_1,\quad 0\leq t\leq T
    \Big\}.
\]
We claim that
$T^*=+\infty$. Suppose, on the contrary, that $T^*<+\infty$. Then,
by Proposition \ref{prop2} and \eqref{initial-small},
\[
    \|(v-\bar v,u-\bar u,\theta-\bar\theta)
    (t,\cdot)\|_{H^2(\mathbb R)}
    \leq
    C_0\bigl(\varepsilon_*+\delta_0^{1/4}\bigr),
    \qquad 0\leq t<T^*.
\]
By the definition of $\varepsilon_*$, we have
\begin{align*}
    C_0\bigl(\varepsilon_*+\delta_0^{1/4}\bigr)
    &=
    \frac{C_0}{C_0+1}
    \left(
        \frac{\varepsilon_1}{2}
        -C_0\delta_0^{1/4}
    \right)
    +C_0\delta_0^{1/4}
    \\
    &\leq\left(
        \frac{\varepsilon_1}{2}
        -C_0\delta_0^{1/4}
    \right)
    +C_0\delta_0^{1/4}= \frac{\varepsilon_1}{2},
\end{align*}
and consequently
\begin{equation*}
    \|(v-\bar v,u-\bar u,\theta-\bar\theta)
    (t,\cdot)\|_{H^2(\mathbb R)}
    \leq \frac{\varepsilon_1}{2}<\eps_1,
    \qquad 0\leq t\leq T^*.
\end{equation*}
By the continuity in time of the solution, the above strict improvement of the bootstrap bound allows the solution to be extended beyond $T^*$ while still satisfying the bootstrap assumption, contradicting the definition of $T^*$. Therefore, we must have $T^* = +\infty$. 


\section*{Acknowledgment}
L.-A. Li was supported by Beijing Natural Science Foundation (No. 1254045), the National Natural Science Foundation of China (No. 12501295) and the Fundamental Research Funds for the Central Universities (No. 2243100008). J. Wu was partially supported by the National Science Foundation of the United States under Grants DMS-2104682 and DMS-2309748. X. Xu was partially supported by the National Natural Science Foundation of China (grants 12571244, 12171040) and the National Key R\&D Program of China (grant 2020YFA0712900).

\bibliographystyle{plain}
\bibliography{reference}

\end{document}